\DeclareMathOperator*{\esssup}{ess\,sup}
\begin{document}

\hfuzz=6pt

\widowpenalty=10000

\newtheorem{cl}{Claim}
\newtheorem{theorem}{Theorem}[section]
\newtheorem{proposition}[theorem]{Proposition}
\newtheorem{coro}[theorem]{Corollary}
\newtheorem{lemma}[theorem]{Lemma}
\newtheorem{definition}[theorem]{Definition}

\newtheorem{assum}{Assumption}[section]
\newtheorem{example}[theorem]{Example}
\newtheorem{remark}[theorem]{Remark}
\renewcommand{\theequation}
{\thesection.\arabic{equation}}

\def\SL{\sqrt H}

\newcommand{\mar}[1]{{\marginpar{\sffamily{\scriptsize
        #1}}}}

\newcommand{\as}[1]{{\mar{AS:#1}}}

\newcommand\R{\mathbb{R}}
\newcommand\RR{\mathbb{R}}
\newcommand\CC{\mathbb{C}}
\newcommand\NN{\mathbb{N}}
\newcommand\ZZ{\mathbb{Z}}
\def\RN {\mathbb{R}^n}
\renewcommand\Re{\operatorname{Re}}
\renewcommand\Im{\operatorname{Im}}

\newcommand{\mc}{\mathcal}
\newcommand\D{\mathcal{D}}
\def\hs{\hspace{0.33cm}}
\newcommand{\la}{\lambda}
\def \l {\lambda}
\newcommand{\eps}{\varepsilon}
\newcommand{\pl}{\partial}
\newcommand{\supp}{{\rm supp}{\hspace{.05cm}}}
\newcommand{\x}{\times}
\newcommand{\lag}{\langle}
\newcommand{\rag}{\rangle}

\newcommand\wrt{\,{\rm d}}

\newcommand {\BMO}{{\mathrm{BMO}}}
\newcommand {\Rn}{{\mathbb{R}^{n}}}
\newcommand {\rb}{\rangle}
\newcommand {\lb}{{\langle}}
\newcommand {\HT}{\mathcal{H}}
\newcommand {\Hp}{\mathcal{H}^{p}_{FIO}(\Rn)}
\newcommand {\ud}{\mathrm{d}}
\newcommand {\Sp}{S^{*}\Rn}
\newcommand {\Sw}{\mathcal{S}}
\newcommand {\w}{{\omega}}
\newcommand {\ph}{{\varphi}}
\newcommand {\para}{{\mathrm{par}}}
\newcommand {\N}{{{\mathbb N}}}
\newcommand {\Z}{{{\mathbb Z}}}
\newcommand {\F}{{\mathcal{F}}}
\newcommand {\C}{{\mathbb C}}
\newcommand {\vanish}[1]{\relax}
\newcommand {\ind}{{\mathbf{1}}}

\newcommand{\wt}{\widetilde}

\newtheorem{corollary}[theorem]{Corollary}

\title[Hardy spaces for Fourier Integral Operators]
 {Characterizations of the Hardy space $\mathcal{H}_{FIO}^{1}(\mathbb{R}^{n})$ for Fourier Integral Operators }

\makeatletter
\@namedef{subjclassname@2020}{\textup{2020} Mathematics Subject Classification}
\makeatother
\subjclass[2020] {Primary 42B35. Secondary 35S30, 42B30.}

\keywords{Fourier integral operators, Hardy spaces, Littlewood--Paley g function, Maximal function}

\author{Zhijie Fan, \ Naijia Liu, \ Jan Rozendaal \ and  Liang Song}
\address{
   Zhijie Fan, School of Mathematics and Statistics, Wuhan University, Wuhan 430072, China}
\email{ZhijieFan@whu.edu.cn}

\address{
   Naijia Liu,
    Department of Mathematics,
    Sun Yat-sen University,
    Guangzhou, 510275,
    P.R.~China}
\email{liunj@mail2.sysu.edu.cn}

\address{
Jan Rozendaal,
Institute of Mathematics, Polish Academy of Sciences,
ul.~\'{S}niadeckich 8,
00-656 Warsaw,
Poland}
\email{jrozendaal@impan.pl}

\address{
   Liang Song,
    Department of Mathematics,
    Sun Yat-sen University,
    Guangzhou, 510275,
    P.R.~China}
\email{ songl@mail.sysu.edu.cn}

\begin{abstract}
The Hardy spaces for Fourier integral operators $\mathcal{H}_{FIO}^{p}(\mathbb{R}^{n})$, for $1\leq p\leq \infty$, were introduced by Smith in \cite{Smith98a} and Hassell et al.~in \cite{HaPoRo20}. In this article, we give several equivalent characterizations of $\mathcal{H}_{FIO}^{1}(\mathbb{R}^{n})$, for example in terms of Littlewood--Paley $g$ functions and maximal functions. This answers a question from \cite{Rozendaal21}. We also give several applications of the characterizations.
\end{abstract}

\maketitle

\section{Introduction}

\subsection{Overview}
Hardy spaces have long been of great importance in harmonic analysis and related fields. For example, the classical Hardy space $H^{1}(\Rn)$ is the natural harmonic analytic substitute of %the Lebesgue space 
$L^{1}(\Rn)$ for the study of
singular integral operators (see \cite{Stein93,Grafakos14b}). And in recent years, the theory of adapted Hardy spaces has played a major role in the analysis of parabolic and elliptic partial differential equations with rough coefficients. These adapted Hardy spaces are the natural substitutes of $L^{p}(\Rn)$ when the equation under consideration is not well behaved on $L^{p}(\Rn)$ for certain $1\leq p\leq \infty$ (see \cite{Auscher07, HoLuMiMiYa11}). In turn, there are many characterizations of Hardy spaces, for example in terms of
area functionals, Littlewood--Paley g functions and maximal functions. These characterizations are powerful harmonic analytic tools, as they allow for different methods of tackling a given problem.

Although singular integral operators are bounded on $H^{1}(\RN)$, and thus bounded from $H^{1}(\Rn)$ to $L^{1}(\Rn)$, the situation is quite different for oscillatory integral operators. Indeed, Fourier integral operators (FIOs) of order zero are in general not bounded from $H^{1}(\RN)$ to $L^{1}(\Rn)$ unless $n=1$. FIOs are typical examples of oscillatory integrals, and they arise naturally in classical analysis and partial differential equations, for example as the solution operators to wave equations with smooth coefficients (see \cite{Hormander71,Duistermaat11,Sogge17}). As shown by Seeger, Sogge and Stein in \cite{SeSoSt91}, a compactly supported FIO $T$ of order zero, associated with a local canonical graph, satisfies $T: \lb D\rb^{-\frac{n-1}{2}}H^{1}(\RN) \to L^{1}(\RN)$, and the exponent $\frac{n-1}{2}$ cannot be improved. Here $\lb D\rb^{-\frac{n-1}{2}}$ is the Fourier multiplier with symbol $\lb \xi\rb^{-\frac{n-1}{2}}=(1+|\xi|^{2})^{-\frac{n-1}{4}}$. This result is often summarized by saying that FIOs ``lose" $(n-1)/2$ derivatives on $H^{1}(\RN)$. Using interpolation, the $L^{2}$-boundedness of FIOs, and duality, one in turn obtains optimal results about the $L^{p}$-boundedness of FIOs, and thereby also the optimal $L^{p}$-regularity for wave equations with smooth coefficients.

Although the loss of derivatives for FIOs on $L^{p}(\R^{n})$ is unavoidable for $p\neq 2$ and $n>1$, one could argue that $L^{p}$-spaces are, in some ways, not the right function spaces for the analysis of FIOs. Indeed, in \cite{Smith98a} Smith introduced a Hardy space, denoted by $\mathcal{H}_{FIO}^{1}(\mathbb{R}^{n})$, which is invariant under suitable FIOs of order zero, and this space is large enough to recover the results in \cite{SeSoSt91}. Recently, in \cite{HaPoRo20}, Smith's work was extended to a full scale of Hardy spaces $(\mathcal{H}_{FIO}^{p}(\mathbb{R}^{n}))_{1\leq p\leq\infty}$. 
These spaces are invariant under FIOs of order zero, and they satisfy Sobolev embeddings which allow one to directly recover the optimal results about $L^p$-boundedness of FIOs. 

Apart from the intrinsic interest in determining the natural function spaces for FIOs, the Hardy spaces for FIOs were introduced with applications to wave equations with rough coefficients and nonlinear wave equations in mind. Indeed, a common method of solving rough or nonlinear equations is to use iterative constructions to build a solution. However, such a process breaks down if one loses derivatives in each iteration step. %if one loses derivatives in each iteration step, then such a process %fundamentally 
%breaks down. 
On the other hand, since $\Hp$ is invariant under FIOs, %suitable FIOs of order zero, %for all $1\leq p\leq \infty$, 
one can %use
iterate %iterative constructions 
to build a solution on $\Hp$, and then afterwards use the Sobolev embeddings for $\Hp$ to deduce sharp results on $L^{p}(\Rn)$. %about $L^{p}$-regularity. 
This approach was %recently 
used in %by Hassell and the third author 
\cite{Hassell-Rozendaal20} to obtain the optimal fixed-time $L^{p}$-regularity for wave equations with rough coefficients, for $1<p<\infty$. This constitutes the first extension of the seminal work in \cite{SeSoSt91} to a general class of wave equations with rough coefficients. Function spaces related to the Hardy spaces for FIOs were utilized in a different manner in \cite{Frey-Portal20}, to extend %the results in 
\cite{SeSoSt91} to a specific class of rough wave equations. We also note that the Hardy spaces for FIOs were applied in \cite{Rozendaal22b} to obtain improved local smoothing estimates for the Euclidean wave equation, by connecting these spaces to the $\ell^{p}$-decoupling inequality from \cite{Bourgain-Demeter15}.

In \cite{HaPoRo20} (and implicitly already in \cite{Smith98a}), the definition of $\Hp$ follows a template from the theory of adapted Hardy spaces, using embeddings into tent spaces, where much of the subsequent analysis can be performed. However, an intrinsic difference between the theory of parabolic and elliptic equations, for which the theory  of Hardy spaces has been widely used, and that of hyperbolic equations, is that the latter exhibit propagation of singularities. This is the phenomenon whereby singularities of the initial data are moved around by the solution operators, and it takes place on phase space, i.e.~on the cotangent bundle $T^{*}\R^{n}=\R^{n}\times\Rn$ of $\Rn$. Hence, to obtain function spaces that are invariant under FIOs, one needs to move from $\Rn$ to phase space. %For $\Hp$ t
This is achieved %by 
using wave packet transforms, to embed function spaces over $\Rn$ into tent spaces over the cosphere bundle $\Sp=\Rn\times S^{n-1}$ of $\Rn$.  One can then apply the established theory of tent spaces (see \cite{CoMeSt85, Amenta14}).% to %study 
%the Hardy spaces for FIOs.%, and one can even prescribe suitable kernel bounds that guarantee boundedness on the Hardy spaces for FIOs.

Although this definition of $\Hp$ leads to a robust theory which builds on tools from %that have been successful in 
other parts of harmonic analysis, it has several drawbacks. For example, the resulting function space norm involves a relatively technical conical square function over $\Sp$, %which is rather technical, % and not particularly amenable to direct calculations. %(see \eqref{eq:conical} below). 
%It 
and it is natural to wonder whether there are descriptions of $\Hp$ that are easier to work with. Moreover, $H^{1}(\Rn)$ can be characterized in a variety of ways, and one might ask whether similar characterizations hold for the Hardy spaces for FIOs. Finally, although the definition of $\Hp$ in \cite{HaPoRo20,Smith98a} comes with kernel conditions which guarantee boundedness on $\Hp$, prior work on rough wave equations on $L^{2}(\Rn)$ (see \cite{Smith98b,Smith06,Smith14,Smith-Tataru05,Tataru00,Tataru01,Tataru02})  makes crucial use of techniques from Littlewood--Paley theory and paradifferential calculus, and some of these tools are not available in the theory of tent spaces.

To address these issues, in \cite{Rozendaal21} %the third author of this article proved 
several characterizations of $\mathcal{H}_{FIO}^{p}(\mathbb{R}^{n})$ were proved for $1<p<\infty$. First, $\mathcal{H}_{FIO}^{p}(\mathbb{R}^{n})$ is characterized in terms of $L^p(\mathbb{R}^{n})$-norms of parabolic frequency localizations. %Then, a
As a corollary, any characterization of $L^p(\mathbb{R}^{n})$ yields a corresponding version for $\mathcal{H}_{FIO}^{p}(\mathbb{R}^{n})$. In particular, %in this manner 
one obtains characterizations of $\Hp$ in terms of Littlewood--Paley $g$ functions and %in terms of 
maximal functions. These characterizations are more amenable to direct calculations, and they allow one to incorporate %powerful 
tools from Littlewood--Paley theory and paradifferential calculus. Such tools were subsequently used in \cite{Rozendaal20,Rozendaal22} to obtain mapping properties of rough pseudodifferential operators on $\Hp$ for $1<p<\infty$, and these mapping properties in turn play a crucial role in the proof of the optimal $L^{p}$-regularity for wave equations with rough coefficients in \cite{Hassell-Rozendaal20}.

In fact, the restriction in \cite{Hassell-Rozendaal20} to $1<p<\infty$ is due to the restriction to such $p$ in the main results of \cite{Rozendaal20,Rozendaal22}. Since the proofs of those results rely %heavily on Littlewood--Paley theory, paradifferential calculus and 
on the equivalent characterizations of $\Hp$ from \cite{Rozendaal21}, it is %particularly 
relevant for applications to rough wave equations to extend the characterizations in \cite{Rozendaal21} to $p=1$. Unfortunately, the methods of \cite{Rozendaal21} do not apply directly for $p=1$ or $p=\infty$, and it was left as an open question whether similar characterizations hold for $\HT^{1}_{FIO}(\Rn)$ and $\HT^{\infty}_{FIO}(\Rn)$.

In the present article, we answer the question in \cite{Rozendaal21} regarding $\HT^{1}_{FIO}(\Rn)$, by obtaining several equivalent characterizations of $\HT^{1}_{FIO}(\Rn)$, for example in terms of Littlewood--Paley g functions and maximal functions. These characterizations are similar to those in \cite{Rozendaal21}, and they allow one to incorporate %powerful 
tools from other parts of harmonic analysis for the study of FIOs. In particular, this allows for %opens the door to 
possible extensions of the results for rough wave equations in \cite{Hassell-Rozendaal20} %from $1<p<\infty$ 
to $p=1$.

In this article we also give a more direct application of the equivalent characterizations, and we show how one can use them to calculate the $\HT^{1}_{FIO}(\Rn)$-norm of certain functions.

\subsection{Statement of results}

To make our results precise, we first recall the definition of $\mathcal{H}_{FIO}^{p}(\mathbb{R}^{n})$  for $1\leq p<\infty$. Throughout, fix $n\geq 2$. The results in this article go through for $n=1$, but they reduce to classical statements about the local Hardy space $\HT^{1}(\R)$.

Let $\Sp=\Rn\times S^{n-1}$ be the cosphere bundle over $\mathbb{R}^{n}$, endowed with the standard measure $\ud x\ud\omega$ and
with a metric $d$ which arises from contact geometry (see Section \ref{subsec:metric}). We note that $(\Sp, d, \ud x\ud\omega)$ is a doubling metric measure space. For $\sigma>0$ and $(x,\w)\in\Sp$, we let $B_{\sqrt{\sigma}}(x,\omega):=\big\{(y,\nu)\in \Sp\mid d(y,\nu;x,\omega)< \sqrt{\sigma}\big\}$ be the ball around $(x,\omega)$ of radius $\sqrt{\sigma}$ with respect to the metric $d$. Throughout, fix a $q\in C_{c}^{\infty}(\mathbb{R}^{n})$ such that $q(\xi)=1$ for $|\xi|\leq 2$, and let $q(D)$ be the corresponding Fourier multiplier operator. Also, for $0<\sigma<1$ we let $\theta_{\nu,\sigma}\in C^{\infty}_{c}(\Rn)$ be a smooth function localized to the high frequency region $\big{\{}\xi \in \mathbb{R}^{n}\mid   |\xi|\eqsim \sigma^{-1},\big|\frac{\xi}{|\xi|}-\nu\big|\eqsim \sigma^{\frac{1}{2}}\big\}$ (see \eqref{the def of theta} for the precise definition of $\theta_{\nu,\sigma}$). For $f\in\Sw'(\Rn)$ and $(x,\w)\in\Sp$, set
\begin{equation}\label{eq:conical}
S(f)(x,\omega):=\bigg(\int_{0}^{1}\fint_{B_{\sqrt{\sigma}}(x,\omega)}|\theta_{\nu,\sigma}(D)f(y)|^{2}\,\ud y\ud\nu\frac{\ud\sigma}{\sigma}\bigg)^{1/2}.
\end{equation}
We can now define $\Hp$ for $1\leq p<\infty$.

\begin{definition}\label{Area function}
For $p\in[1,\infty)$, let $\mathcal{H}_{FIO}^{p}(\mathbb{R}^{n})$ consist of all $f\in \mathcal{S}'(\RN)$ such that $S(f)\in L^{p}(\Sp)$ and $q(D)f\in L^{p}(\Rn)$, endowed with the norm
\[
\|f\|_{\mathcal{H}_{FIO}^{p}(\mathbb{R}^{n})}:=\bigg(\int_{\Sp}\big(S(f)(x,\omega)\big)^p \ \ud x\ud\omega\bigg)^{1/p}+\|q(D)f\|_{L^p(\RN)}.
\]
\end{definition}
We note that this is not the original definition of $\Hp$ from \cite{HaPoRo20}. However, it follows from \cite[Corollary 3.8]{Rozendaal21} that Definition \ref{Area function} is equivalent to the original definition.

To define $\HT^{\infty}_{FIO}(\Rn)$ one has to replace the conical square function in \eqref{eq:conical} by a Carleson measure condition (see \cite[Section 6]{HaPoRo20}). However, $\HT^{\infty}_{FIO}(\Rn)$ will not play a significant role in this article, and for our purposes it suffices to define $\HT^{\infty}_{FIO}(\Rn)$ as the dual of $\HT^{1}_{FIO}(\Rn)$ (see \cite[Proposition 6.8]{HaPoRo20}):
\begin{equation}\label{eq:Hinfty}
\HT^{\infty}_{FIO}(\Rn)=(\HT^{1}_{FIO}(\Rn))^{*}.
\end{equation}
Here the duality pairing is the standard duality pairing $\lb f,g\rb_{\Rn}$ for $f\in \HT^{\infty}_{FIO}(\Rn)\subseteq\Sw'(\Rn)$ and $g\in \Sw(\Rn)\subseteq\HT^{1}_{FIO}(\Rn)$.

Next, we define the  Littlewood-Paley g function for FIOs as follows: for $f\in\Sw'(\Rn)$ and $(x,\omega)\in \Sp$, set
\begin{equation}\label{eq:vertical}
G(f)(x,\omega):=\bigg(\int_{0}^{1}|\theta_{\omega,\sigma}(D)f(x)|^{2}\frac{\ud\sigma}{\sigma}\bigg)^{1/2}.
\end{equation}
We can then introduce the second function space of interest in this article.

\begin{definition}\label{def:HardyG}
Let $\mathcal{H}_{FIO,G}^{1}(\mathbb{R}^{n})$ consist of all $f\in \Sw'(\mathbb{R}^{n})$ such that $G(f)\in L^{1}(\Sp)$ and $q(D)f\in L^{1}(\mathbb{R}^{n})$, endowed with the norm
\[
\|f\|_{\mathcal{H}_{FIO,G}^{1}(\mathbb{R}^{n})}:=\|G(f)\|_{L^{1}(\Sp)}+\|q(D)f\|_{L^{1}(\mathbb{R}^{n})}.
\]
\end{definition}

By \cite[Proposition 2.1 and Remark 2.2]{AuHoMa12}, the following continuous inclusion holds: $\mathcal{H}_{FIO}^{1}(\mathbb{R}^{n})\subseteq \mathcal{H}_{FIO,G}^{1}(\mathbb{R}^{n})$. However, until now it was not clear whether one also has $\mathcal{H}_{FIO,G}^{1}(\mathbb{R}^{n})\subseteq \mathcal{H}_{FIO}^{1}(\mathbb{R}^{n})$. In this article we show that this inclusion also holds, so that $\mathcal{H}_{FIO}^{1}(\mathbb{R}^{n})=\mathcal{H}_{FIO,G}^{1}(\mathbb{R}^{n})$.

We will give two additional characterizations of $\HT^{1}_{FIO}(\Rn)$. To state these, let $\alpha>0$ and, for $f\in \mathcal{S^{\prime}}(\mathbb{R}^{n})$ and $(x,\omega)\in\Sp$, set
\[
\mathcal{G}_{\alpha}^{*}(f)(x,\omega):=\bigg(\int_{0}^{1}\int_{\Sp}
\frac{|\theta_{\nu,\sigma}(D)f(y)|^{2}}{\sigma^{n}(1+\sigma^{-1}
d(x,\omega;y,\nu)^{2})^{n\alpha}}\, \ud y\ud\nu\frac{\ud\sigma}{\sigma}\bigg)^{1/2}.
\]
Also, let $\Phi\in\Sw(\Rn)$ be a Schwartz function such that $\Phi(0)=1$, and for $\sigma>0$ and $\xi\in\Rn$ let $\Phi_{\sigma}(\xi):=\Phi(\sigma\xi)$. The function $\varphi_{\omega}\in C^{\infty}(\Rn)$ which occurs below is supported on a paraboloid in the direction of $\w\in S^{n-1}$, and it is defined in Section \ref{sub1}. We recall that a tempered distribution $f\in\Sw'(\Rn)$ is a \emph{bounded distribution} if $f\ast g\in L^{\infty}(\Rn)$ for all $g\in\Sw(\Rn)$.

\begin{definition}\label{def:Hardyothers}
Let $\mathcal{H}_{FIO,\max}^{1}(\mathbb{R}^{n})$ consist of all $f\in \Sw'(\mathbb{R}^{n})$ such that $\varphi_{\omega}(D)f$
 is a bounded distribution for almost all $\omega \in S^{n-1}$, $\int_{\Sp}\sup\limits_{\sigma>0}|\Phi_{\sigma}(D)\varphi_{\omega}(D)f(x)|\,\ud x\ud\omega <\infty$, and $q(D)f\in L^{1}(\mathbb{R}^{n})$, endowed with the norm
\[
\|f\|_{\mathcal{H}_{FIO,\max}^{1}(\mathbb{R}^{n})}:=\int_{\Sp}\sup\limits_{\sigma>0}|\Phi_{\sigma}(D)\varphi_{\omega}(D)f(x)|\,\ud x\ud\omega+\|q(D)f\|_{L^{1}(\mathbb{R}^{n})}.
\]
Let $\mathcal{H}_{FIO,\mathcal{G}_{\alpha}^{*}}^{1}(\mathbb{R}^{n})$ consist of all $f\in \Sw'(\mathbb{R}^{n})$ such that $\mathcal{G}_{\alpha}^{*}(f)\in L^{1}(\Sp)$ and $q(D)f\in L^{1}(\mathbb{R}^{n})$, endowed with the norm
\[
\|f\|_{\mathcal{H}_{FIO,\mathcal{G}_{\alpha}^{*}}^{1}(\mathbb{R}^{n})}:=\|\mathcal{G}_{\alpha}^{*}(f)\|_{L^{1}(\Sp)}+\|q(D)f\|_{L^{1}(\mathbb{R}^{n})}.
\]
\end{definition}

The following theorem is our main result.

\begin{theorem}\label{thm:main}
Let $\alpha>2$. Then
\[
\mathcal{H}_{FIO}^{1}(\mathbb{R}^{n})=\mathcal{H}_{FIO,G}^{1}(\mathbb{R}^{n})= \mathcal{H}_{FIO,\max}^{1}(\mathbb{R}^{n})=\mathcal{H}_{FIO,\mathcal{G}_{\alpha}^{*}}^{1}(\mathbb{R}^{n}),
\]
with equivalence of norms.
\end{theorem}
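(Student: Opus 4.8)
The plan is to establish a circle of inclusions among the four spaces. The continuous inclusion $\HT^1_{FIO}(\Rn)\subseteq\HT^1_{FIO,G}(\Rn)$ is already given, via \cite{AuHoMa12}: a conical square function controls the corresponding vertical ($g$-function) square function in $L^1$ because integrating the conical expression against a fixed bump and using Fubini reproduces the vertical one. To close the loop I would aim to prove the chain
\[
\HT^1_{FIO,G}(\Rn)\subseteq\HT^1_{FIO,\max}(\Rn)\subseteq\HT^1_{FIO,\mathcal{G}_\alpha^*}(\Rn)\subseteq\HT^1_{FIO}(\Rn),
\]
with the last inclusion back to $S(f)$ closing everything. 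The key conceptual point, following \cite{Rozendaal19}, is that after a \emph{second dyadic-parabolic decomposition} the operators $\theta_{\nu,\sigma}(D)$, restricted to the relevant frequency sector, behave like a classical Littlewood--Paley family in a rotated and parabolically-rescaled coordinate system. Concretely, for each fixed direction $\w$ the multipliers $\{\theta_{\w,\sigma}(D)\}_{0<\sigma<1}$ look, after the change of variables that flattens the paraboloid around $\w$, like standard dilations $\psi_\sigma(D)$ on $\Rn$. This lets me import the classical equivalences between square functions and maximal functions pointwise in $\w$, and then integrate over $S^{n-1}$.

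The steps, in order, are: (1) Fix $\w$ and relate $G(f)(\cdot,\w)$, $\mathcal{G}_\alpha^*(f)(\cdot,\w)$ and the grand maximal/radial maximal functions of $\varphi_\w(D)f$ built from $\Phi_\sigma$, using the classical Fefferman--Stein theory of $H^1(\Rn)$ in the rescaled coordinates; this requires checking that the metric $d$ restricted to the $\w$-slice is comparable to the parabolically-rescaled Euclidean metric, so that the weight $\sigma^{-n}(1+\sigma^{-1}d(x,\w;y,\nu)^2)^{-n\alpha}$ becomes a standard approximation-of-identity weight — this is where $\alpha>2$ (really $\alpha$ large enough relative to the doubling dimension) is used. (2) Integrate the pointwise-in-$\w$ comparisons over $\w\in S^{n-1}$ and combine with $\|q(D)f\|_{L^1}$ to get norm equivalences between $\HT^1_{FIO,G}$, $\HT^1_{FIO,\max}$ and $\HT^1_{FIO,\mathcal{G}_\alpha^*}$. (3) For the return to $\HT^1_{FIO}(\Rn)$, control $S(f)$ by $\mathcal{G}_\alpha^*(f)$: since $S(f)(x,\w)\le C\,\mathcal{G}_\alpha^*(f)(x,\w)$ pointwise (the ball average is dominated by the weighted integral once $\alpha>0$), one gets $\|S(f)\|_{L^1}\lesssim\|\mathcal{G}_\alpha^*(f)\|_{L^1}$ directly, closing the circle. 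Throughout I would use the frame/wave-packet machinery and the almost-orthogonality estimates for the $\theta_{\nu,\sigma}(D)$ from \cite{HaPoRo18,Rozendaal19} to handle the interaction between nearby directions $\nu$ and $\w$.

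The main obstacle is Step (1), specifically the passage from square-function control to maximal-function control \emph{at the endpoint $p=1$}, uniformly in the direction $\w$ and with constants that survive integration over $S^{n-1}$. For $1<p<\infty$ this is soft (vector-valued Littlewood--Paley / Fefferman--Stein), which is exactly why the method of \cite{Rozendaal19} stopped there; at $p=1$ one genuinely needs an $H^1$-type argument — e.g. an atomic or tent-space decomposition of $G(f)$ on $S^*(\Rn)$, or a good-$\lambda$ / Calder\'on reproducing-formula argument — carried out with uniform control in $\w$. A delicate technical point is that the natural "atoms" here live on parabolic balls in phase space whose eccentricity degenerates as $\sigma\to0$, so one must verify that the reproducing formula $f=\int_0^1\theta_{\w,\sigma}(D)^2 f\,\frac{d\sigma}{\sigma}+q(D)$-type terms, together with the off-diagonal decay of $\Phi_\sigma(D)\varphi_\w(D)\theta_{\nu,\tau}(D)$ in the contact metric $d$, yields the maximal bound; getting the $\w$-dependence of all constants to be harmless (bounded, or at worst integrably singular, in $\w$) is the crux. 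I expect that the wave-packet transform of \cite{HaPoRo18}, which trivializes the $\w$-integration by turning it into part of the tent-space integral over $S^*(\Rn)$, is the right device to make this uniform.
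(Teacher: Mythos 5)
Your circle of inclusions is logically sound, and three of its four legs are indeed soft: $\HT^{1}_{FIO}(\Rn)\subseteq\HT^{1}_{FIO,G}(\Rn)$ is given; $\HT^{1}_{FIO,G}(\Rn)\subseteq\HT^{1}_{FIO,\max}(\Rn)$ is a slicewise application of the classical equivalence between the $g$-function and the maximal-function characterizations of $H^{1}(\Rn)$ applied to $\varphi_\omega(D)f$, with constants uniform in $\omega$ (this is Proposition \ref{prop:equivalent} plus the argument of Theorem \ref{pro3}); and $\HT^{1}_{FIO,\mathcal{G}_\alpha^*}(\Rn)\subseteq\HT^{1}_{FIO}(\Rn)$ is your pointwise inequality $S(f)\lesssim\mathcal{G}_\alpha^*(f)$. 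The genuine gap is the remaining leg $\HT^{1}_{FIO,\max}(\Rn)\subseteq\HT^{1}_{FIO,\mathcal{G}_\alpha^*}(\Rn)$. Your Step (1) treats $\mathcal{G}_\alpha^*(f)(\cdot,\omega)$ as a slicewise functional of $\varphi_\omega(D)f$ in a rotated and parabolically-rescaled frame, but it is not: by definition it integrates $|\theta_{\nu,\sigma}(D)f(y)|^2$ over all $(y,\nu)\in\Sp$, and the contact-metric weight localizes $\nu$ only to a $\sqrt{\sigma}$-neighborhood of $\omega$, not to $\nu=\omega$. So $\mathcal{G}_\alpha^*(f)(\cdot,\omega)$ genuinely sees the nearby frequency localizations $\varphi_\nu(D)f$ for a whole family of $\nu$, and no change of variables converts it into a classical $g^*_\lambda$-function of the single function $\varphi_\omega(D)f$. (You also place the main obstacle at the square-function-to-maximal-function passage, but that step is fine slicewise; the difficulty is entirely in returning from slicewise control to a quantity such as $S(f)$ or $\mathcal{G}_\alpha^*(f)$ that integrates over directions.)

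This inter-directional mixing is precisely what makes the endpoint $p=1$ hard, and what the paper's proof handles by a different route. Rather than your chain, the paper proves $\HT^{1}_{FIO,G}(\Rn)\subseteq\HT^{1}_{FIO}(\Rn)$ directly (Theorem \ref{pro1}), and then treats $\mathcal{G}_\alpha^*$ separately via the change-of-aperture estimate in Lemma \ref{lem5}; it is there, not in an approximation-of-identity argument, that the condition $\alpha>2$ is needed. The hard embedding hinges on a Peetre-type maximal function $M_\alpha^*(f)(x,\omega,\sigma)$ on $\Sp\times(0,1)$, a Rychkov-type self-improving lemma (Lemma \ref{lem1}) that bootstraps a sublinear estimate into an $r$-th power pointwise estimate, the Sobolev embedding $\HT^{1}_{FIO,G}(\Rn)\subseteq W^{-\frac{n-1}{4},1}(\Rn)$ (Proposition \ref{prop:Sobolev}) which supplies the a priori growth input that Lemma \ref{lem1} requires, the off-singularity bounds for $\theta_{\nu,\sigma}(D)\chi_{\mu,\tau}(D)$ (Corollary \ref{off}) that make the directional almost-orthogonality quantitative, and the vector-valued Fefferman--Stein inequality on the doubling space $\Sp$ (Lemma \ref{lem3}). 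Your outline gestures at ``almost-orthogonality'' and lists candidate strategies (atomic decomposition of $G(f)$ on $\Sp$, good-$\lambda$, reproducing formula) without carrying any of them out, and the slicewise framing you set up simply does not accommodate the $\nu$-integration that is the crux of the theorem.
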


In particular, up to norm equivalence, the spaces in Theorem \ref{thm:main} are independent of the choice of functions $\theta_{\omega,\sigma}$, $\Phi$ and $\ph_{\w}$ with the required properties. 

Theorem \ref{thm:main} is proved in the main text as Theorems \ref{pro1}, \ref{pro3} and \ref{pro2}. In Section \ref{sec:applications} we give two applications of this result:
\begin{enumerate}
\item Theorem \ref{thm:czo}, which shows that a large class of operators which are bounded on $L^{p}(\Rn)$ for $1<p<\infty$ are also bounded on $\Hp$ for $1\leq p\leq\infty$;
\item Proposition \ref{prop:examples}, which determines in a relatively explicit manner the $\Hp$-norm of functions with frequency support in a dyadic-parabolic region.
\end{enumerate}
Our goal in this last section is to indicate how the equivalent characterizations of $\Hp$ can be used to incorporate techniques from other parts of harmonic analysis, and to calculate the $\HT^{p}_{FIO}(\Rn)$-norm of specific functions. In fact, the explicit description in Proposition \ref{prop:examples} of the $\Hp$-norm of a function with frequency support in a dyadic-parabolic region plays a crucial role in \cite{Rozendaal22b}, by connecting the Hardy spaces for FIOs to the $\ell^{p}$ decoupling inequality.

\subsection{Comparison to previous work}

In \cite{Rozendaal21} it is shown, for $1<p<\infty$, that an $f\in\Sw'(\Rn)$ satisfies $f\in{\Hp}$ if and only if $\ph_{\w}(D)\in L^{p}(\Rn)$ for almost all $\w\in S^{n-1}$, $\int_{S^{n-1}}\|\ph_{\w}(D)f\|_{L^{p}(\Rn)}^{p}\ud \w<\infty$, and $q(D)f\in L^{p}(\Rn)$. Moreover, in this case one has
\begin{equation}\label{eq:characterizationp}
\|f\|_{\Hp}\eqsim \bigg(\int_{S^{n-1}}\|\ph_{\w}(D)f\|_{L^{p}(\Rn)}^{p}\ud \w\bigg)^{1/p}+\|q(D)f\|_{L^{p}(\Rn)}
\end{equation}
for an implicit constant independent of $f$. Using classical characterizations of $L^{p}(\Rn)$ in terms of Littlewood--Paley g functions and maximal functions, one obtains from this similar characterizations of $\Hp$ for $1<p<\infty$ as are given in Theorem \ref{thm:main} for $p=1$.

In fact, in \cite[Remark 4.3]{Rozendaal21} the following question is posed. If $f\in \Sw'(\Rn)$ satisfies $\ph_{\w}(D)\in H^{1}(\Rn)$ for almost all $\w\in S^{n-1}$, $\int_{S^{n-1}}\|\ph_{\w}(D)f\|_{H^{1}(\Rn)}\ud \w<\infty$, and $q(D)f\in L^{1}(\Rn)$, does one have $f\in\HT^{1}_{FIO}(\Rn)$ and
\[
\|f\|_{\mathcal{H}_{FIO}^{1}(\mathbb{R}^{n})} \lesssim\int_{S^{n-1}}\|\varphi_\omega(D)f\|_{H^1(\RN)} \,\ud\omega+\|q(D)f\|_{L^{1}(\mathbb{R}^{n})},
\]
for an implicit constant independent of $f$? The reverse inequality was shown to hold (see \eqref{eq:reverse}). Using classical characterizations of $H^{1}(\Rn)$ in terms of Littlewood--Paley g functions, it is straightforward to show (see Proposition \ref{prop:equivalent}) that Theorem \ref{thm:main} gives an affirmative answer to this question. We leave as an open problem the question whether a similar characterization also holds for $p=\infty$ (see Remark \ref{rem:pinfty}).

It should be noted that the techniques used in this article to prove Theorem \ref{thm:main} are quite different from those in \cite{Rozendaal21}, although we do use the parabolic frequency localizations which played a key role in \cite{Rozendaal21}. More precisely, the characterizations of $\Hp$ for $1<p<\infty$ are obtained in \cite{Rozendaal21} by showing that each $f\in\Hp$ satisfies $\ph_{\w}(D)\in L^{p}(\Rn)$ for almost all $\w\in S^{n-1}$, $\int_{S^{n-1}}\|\ph_{\w}(D)f\|_{L^{p}(\Rn)}^{p}\ud \w<\infty$, and $q(D)f\in L^{p}(\Rn)$, with
\begin{equation}\label{eq:reverse}
\bigg(\int_{S^{n-1}}\|\ph_{\w}(D)f\|_{L^{p}(\Rn)}^{p}\ud \w\bigg)^{1/p}+\|q(D)f\|_{L^{p}(\Rn)}\lesssim \|f\|_{\Hp}.
\end{equation}
After that one uses duality to obtain the reverse inequality.

In the terminology of the present article, this amounts to showing that $\Hp\subseteq \HT^{p}_{FIO,G}(\Rn)$, where $\HT^{p}_{FIO,G}(\Rn)$ is defined in an analogous manner as in Definition \ref{def:HardyG}, and then using duality to obtain the reverse inclusion. For $p=1$, where we are interested in the inclusion $\HT^{1}_{FIO,G}(\Rn)\subseteq\HT^{1}_{FIO}(\Rn)$, such an approach does not appear to work. This is because $\HT^{1}_{FIO}(\Rn)$ is not the dual of $\HT^{\infty}_{FIO}(\Rn)$, and also because the norm of $\HT^{\infty}_{FIO}(\Rn)$ is of a different nature than that of $\Hp$ for $p<\infty$, so that the techniques from \cite{Rozendaal21} do not apply there. Instead, we prove the inclusion $\HT^{1}_{FIO,G}(\Rn)\subseteq\HT^{1}_{FIO}(\Rn)$ directly, using e.g.~pointwise inequalities for a maximal function of Peetre type, as well as boundedness of the vector-valued Hardy--Littlewood maximal function. Our proof is motivated in part by arguments from \cite{BuPaTa96, BuPaTa97, Hu17}.

\subsection{Organization of this article}
In Section \ref{sec:preliminaries}, we recall some notation and background on the metric $d$ and the wave packets which are used to define $\HT^{1}_{FIO}(\Rn)$. In Section \ref{sec3} we then show that $\mathcal{H}_{FIO}^{1}(\mathbb{R}^{n})=\HT^{1}_{FIO,G}(\Rn)$, and in Section
\ref{sec:maximal} we derive from this that $\HT^{1}_{FIO}(\Rn)=\HT^{1}_{FIO,\max}(\Rn)$. Next, in Section \ref{sec:conical}, we show that $\HT^{1}_{FIO}(\Rn)=\HT^{1}_{FIO,{\mathcal G}_{\alpha}^{*}}(\Rn)$, thereby completing the proof of Theorem \ref{thm:main}. We conclude with Section \ref{sec:applications}, which contains two applications of our main result.

\section{Notation and preliminaries}\label{sec:preliminaries}
\setcounter{equation}{0}

\subsection{Notation}

The natural numbers are $\mathbb{N}=\{1,2,\ldots\}$, and $\Z_{+}:=\N\cup\{0\}$. Throughout, we fix $n\in\N$ with $n\geq 2$. For $\xi,\eta\in\Rn$ we write $\lb\xi\rb:=(1+|\xi|^{2})^{1/2}$ and $\lb \xi,\eta\rb:=\xi\cdot\eta$, and for $\xi\neq 0$ we set $\hat{\xi}:=\xi/|\xi|$. We use multi-index notation, where $\partial^{\alpha}_{\xi}=\partial^{\alpha_{1}}_{\xi_{1}}\ldots\partial^{\alpha_{n}}_{\xi_{n}}$ for $\xi=(\xi_{1},\ldots,\xi_{n})\in\Rn$ and $\alpha=(\alpha_{1},\ldots,\alpha_{n})\in\Z_{+}^{n}$.

The Schwartz class and the class of tempered distributions on $\Rn$ are denoted by $\mathcal{S}(\mathbb{R}^{n})$ and $\mathcal{S}^{\prime}(\mathbb{R}^{n})$, respectively. The Fourier transform of an $f\in\mathcal{S}^{\prime}(\mathbb{R}^{n})$ is denoted by $\mathcal{F}f$, and for $f\in L^{1}(\Rn)$ it is normalized as follows:
\[
\F f(\xi)=\int_{\Rn}e^{-i\xi\cdot x}f(x)\,\ud x\quad(\xi\in\Rn).
\]
For $m:\Rn\to\C$ a measurable function of temperate growth, $m(D)$ is the Fourier multiplier with symbol $m$.

The volume of a measurable subset $B$ of a measure space is denoted by $V(B)$. If $V(B)<\infty$, then for an integrable function $f:B\to\C$ we write
\begin{align*}
\fint_{B}f(x)\,\ud x:=\frac{1}{V(B)}\int_{B}f(x) \,\ud x.
\end{align*}
The indicator function of a set $E$ is denoted by $\mathbf{1}_{E}$. For $(X,\mu)$ a measure space and $p,q\in[1,\infty)$, we denote by $L^{p}(X;\ell^{q})$ the space of all sequences $\{f_{j}\}_{j\in\N}$ of measurable functions $f_{j}:X\to \C$, $j\in\N$, such that
\begin{align*}
\|\{f_{j}\}_{j\in \N}\|_{L^{p}(X;\ell^{q})}:=\bigg(\int_{X}\|\{f_{j}(x)\}_{j\in\N}\|_{\ell^{q}}^{p}\ud\mu(x)\bigg)^{1/p}<\infty.
\end{align*}
We write $f(s)\lesssim g(s)$ to indicate that $f(s)\leq Cg(s)$ for all $s$ and a constant $C\geq0$ independent of $s$, and similarly for $f(s)\gtrsim g(s)$ and $g(s)\eqsim f(s)$.

\subsection{A metric on the cosphere bundle}\label{subsec:metric}

In this subsection, we collect some background on the underlying metric measure space which will be considered throughout. The relevant metric arises from contact geometry, but for this article we will only need a few basic facts about it. For more details on the material presented here, see \cite[Section 2.1]{HaPoRo20}.

Throughout, we denote elements of the sphere $S^{n-1}$ by $\w$ or $\nu$, and we let $g_{S^{n-1}}$ be the standard Riemannian metric on $S^{n-1}$. Let $\Sp:=\Rn\times S^{n-1}$ be the cosphere bundle of $\mathbb{R}^{n}$, endowed with the standard measure $\ud x\ud\w$. The $1$-form $\alpha_{S^{n-1}}:=\hat{\xi}\cdot dx$ on $\Sp$ determines a contact structure on $\Sp$, the smooth distribution of codimension $1$ hypersurfaces of $T(\Sp)$ given by the kernel of $\alpha_{S^{n-1}}$. Then $(\Sp,\alpha_{S^{n-1}})$ is a contact manifold. Together, the product metric $dx^{2}+g_{S^{n-1}}$ and the contact form determine a sub-Riemannian metric $d$ on $\Sp$:
\begin{align}\label{juli}
d(x,\omega;y,v):={\rm \inf\limits_{\gamma}}\int_{0}^{1}|\gamma^{\prime}(s)| \, \ud s.
\end{align}
for $(x,\omega),(y,\nu)\in \Sp$. Here the infimum is taken over all piecewise $C^{1}$-curves $\gamma:[0,1]\rightarrow \Sp$ such that $\gamma(0)=(x,\omega)$, $\gamma(1)=(y,\nu)$ and $\alpha_{S^{n-1}}(\gamma^{\prime}(s))=0$ for almost all $s\in[0,1]$. Moreover, $|\gamma^{\prime}(s)|$ is the length of the vector $\gamma^{\prime}(s)$ with respect to $dx^{2}+dg_{S^{n-1}}$.

It is shown in \cite[Lemma 2.1]{HaPoRo20} that
\begin{align*}
d(x,\omega;y,\nu)\eqsim \big{(}|\langle\omega,x-y\rangle|+|x-y|^{2}+|\omega-\nu|^{2}\big{)}^{1/2}
\end{align*}
for an implicit constant independent of $(x,\w),(y,\nu)\in\Sp$. The following is \cite[Lemma 2.3]{HaPoRo20}.

\begin{lemma}\label{lem7}
There exists a constant $C>0$ such that, for all $(x,\omega)\in \Sp$, one has
\begin{align*}
\frac{1}{C}\tau^{2n}\leq V(B_{\tau}(x,\omega))\leq C\tau^{2n}
\end{align*}
if $\tau\in(0,1)$ and
\begin{align*}
\frac{1}{C}\tau^{n}\leq V(B_{\tau}(x,\omega))\leq C\tau^{n}
\end{align*}
if $\tau\geq 1$. In particular,
\begin{align*}
V(B_{\lambda \tau}(x,\omega))\leq C\lambda^{2n}V(B_{\tau}(x,\omega))
\end{align*}
for all $\tau>0$ and $\lambda \geq 1$, and $(\Sp,d,\ud x\ud\omega)$ is a doubling metric measure space.
\end{lemma}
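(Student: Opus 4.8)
The plan is to use the equivalence $d(x,\omega;y,\nu)\eqsim\big(|\langle\omega,x-y\rangle|+|x-y|^{2}+|\omega-\nu|^{2}\big)^{1/2}$ quoted just above, which reduces the whole statement to a volume computation for an explicit family of ``boxes'' in $\Rn\times S^{n-1}$. Concretely, for $\tau>0$ the ball $B_{\tau}(x,\omega)$ is comparable (with uniform constants, after adjusting $\tau$ by a fixed factor on each side) to the set of $(y,\nu)$ with $|\langle\omega,x-y\rangle|<\tau^{2}$, $|x-y|<\tau$, and $|\omega-\nu|<\tau$. So it suffices to estimate $V$ of such a set from above and below by a constant times $\tau^{2n}$ for $\tau\in(0,1)$ and by a constant times $\tau^{n}$ for $\tau\geq1$; the doubling inequality $V(B_{\lambda\tau})\le C\lambda^{2n}V(B_{\tau})$ then follows by combining the two regimes (the worst case being when $\tau<1\le\lambda\tau$, where one loses the full $\lambda^{2n}$), and doubling plus the fact that $\Sp$ has infinite measure and no atoms gives that $(\Sp,d,\ud x\ud\omega)$ is a doubling metric measure space.

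First I would handle the $\Rn$-factor: fix $\omega$ and estimate the volume of $E_{\tau}(x,\omega):=\{y\in\Rn:|\langle\omega,x-y\rangle|<\tau^{2},\ |x-y|<\tau\}$ with respect to $\ud y$. Translating so that $x=0$ and rotating so that $\omega=e_{1}$, this becomes $\{y:|y_{1}|<\tau^{2},\ |y|<\tau\}$. For $\tau\in(0,1)$ the constraint $|y_{1}|<\tau^{2}$ is the binding one in the $y_1$-direction while the remaining $n-1$ coordinates range over a ball of radius $\eqsim\tau$, giving volume $\eqsim\tau^{2}\cdot\tau^{n-1}=\tau^{n+1}$. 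For $\tau\ge1$ the constraint $|y|<\tau$ dominates and the $|y_1|<\tau^2$ constraint is vacuous on that ball, so the volume is $\eqsim\tau^{n}$. Next I multiply by the measure of $\{\nu\in S^{n-1}:|\omega-\nu|<\tau\}$: for $\tau\in(0,1)$ this is a spherical cap of radius $\eqsim\tau$, hence has $g_{S^{n-1}}$-measure $\eqsim\tau^{n-1}$; for $\tau\gtrsim1$ it is all of $S^{n-1}$, so the measure is $\eqsim1$. By Fubini (integrate first in $\nu$, which is independent of $y$) one gets $V(B_{\tau}(x,\omega))\eqsim\tau^{n+1}\cdot\tau^{n-1}=\tau^{2n}$ for $\tau\in(0,1)$ and $V(B_{\tau}(x,\omega))\eqsim\tau^{n}\cdot1=\tau^{n}$ for $\tau\ge1$, with constants independent of $(x,\omega)$ by translation-invariance in $x$ and rotation-invariance in $\omega$. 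This is exactly the two displayed estimates.

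The main obstacle is really just bookkeeping: one must track how the absorbed comparison constant in $d(x,\omega;y,\nu)\eqsim(\cdots)^{1/2}$ interacts with the two different homogeneities (the $|\langle\omega,x-y\rangle|$ term scales like $\tau^{2}$ while $|x-y|$ and $|\omega-\nu|$ scale like $\tau$), and check that inflating or deflating $\tau$ by a fixed constant does not move one across the threshold $\tau=1$ in a way that breaks the stated form of the bounds — it does not, because for $\tau$ in a fixed compact subinterval of $(0,\infty)$ both $\tau^{n}$ and $\tau^{2n}$ are comparable to $1$, so the two regimes match up consistently at $\tau\eqsim1$. A minor additional point is the transition from $|\omega-\nu|$ (the ambient $\Rn$-distance between the two unit vectors) to geodesic distance on $S^{n-1}$, but these are comparable for $\nu$ in a hemisphere and for $\tau\gtrsim1$ one simply uses that $S^{n-1}$ has finite total measure; either way the caps have measure $\eqsim\min(\tau,1)^{n-1}$. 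Since this lemma is quoted from \cite[Lemma 2.3]{HaPoRo18} I would in fact just cite it, but the sketch above is the proof one would write out.
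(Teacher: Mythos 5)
Your sketch is correct. The paper itself does not prove this lemma; it simply quotes it as \cite[Lemma 2.3]{HaPoRo18}, relying on the explicit formula $d(x,\omega;y,\nu)\eqsim\big(|\langle\omega,x-y\rangle|+|x-y|^{2}+|\omega-\nu|^{2}\big)^{1/2}$ established just above, exactly as you do. Your box-decomposition argument is the standard way to verify the volume bounds: you correctly identify the anisotropic scaling ($\tau^{2}$ in the $\omega$-direction of $\RN$, $\tau$ transversally and on $S^{n-1}$), you correctly note that the $|y_1|<\tau^2$ constraint becomes vacuous precisely when $\tau\geq 1$ while the cap $\{|\omega-\nu|<\tau\}$ saturates the sphere, and you handle the doubling exponent $2n$ by observing the worst case is $\tau<1\leq\lambda\tau$, where $V(B_{\lambda\tau})/V(B_\tau)\eqsim(\lambda\tau)^n/\tau^{2n}=\lambda^n\tau^{-n}\leq\lambda^{2n}$. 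The two technical caveats you flag (absorbing the comparison constant into the radius across the threshold $\tau\eqsim1$, and Euclidean vs.\ geodesic distance on $S^{n-1}$) are exactly the right ones to worry about and both resolve as you say. So you have filled in the details behind the citation rather than diverging from the paper; either citing the reference or including this sketch would be appropriate.
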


\subsection{Wave packets}\label{sub1}

In this subsection we introduce the wave packets which are used to define the Hardy spaces for Fourier integral operators. For more on this, see \cite[Section 4]{HaPoRo20} and \cite[Section 3]{Rozendaal21}.

Fix a non-negative radial $\varphi \in C_{c}^{\infty}(\mathbb{R}^{n})$ such that $\varphi\equiv 1$ in a neighborhood of zero and $\varphi(\xi)=0$ for $|\xi|>1$. For $\sigma>0$, $\w\in S^{n-1}$ and $\xi\in\Rn\setminus\{0\}$ set $c_{\sigma}:=\big{(}\int_{S^{n-1}}\varphi(\frac{e_{1}-\nu}{\sqrt{\sigma}})^{2}\ud\nu\big{)}^{-1/2}$, where $e_{1}$ is the first basis vector of $\Rn$ (this particular choice is irrelevant), and $\varphi_{\omega, \sigma}(\xi):=c_{\sigma}\varphi\big(\frac{\hat{\xi}-\omega}{\sqrt{\sigma}}\big)$. Also let $\varphi_{\omega, \sigma}(0):=0$. Next, let $\Psi\in \mathcal{S}(\mathbb{R}^{n})$ be a non-negative radial function, with $\Psi(\xi)=0$ if $|\xi|\notin[\frac{1}{2},2]$, $\Psi(\xi)=c>0$ if $|\xi|\in [\frac{3}{4},\frac{3}{2}]$, and
\begin{equation}\label{eq:Psi}
\int_{0}^{\infty}\Psi(\sigma\xi)^{2}\frac{\ud\sigma}{\sigma}=1\quad (\xi\neq 0).
\end{equation}
For $\sigma>0$ and $\xi\in\Rn$ set $\Psi_{\sigma}(\xi):=\Psi(\sigma\xi)$. Now, for $\omega\in S^{n-1}$, write
\[
\varphi_{\omega}(\xi):=\int_{0}^{4}\Psi_{\tau}(\xi)\ph_{\w,\tau}(\xi)\frac{\ud\tau}{\tau}
\]
and, if $\sigma\in(0,1)$,
\begin{equation}\label{the def of theta}
\theta_{\omega,\sigma}(\xi):=\Psi_{\sigma}(\xi)\varphi_{\omega}(\xi).
\end{equation}
These wave packets were introduced in \cite{Rozendaal21}, and in this article they have already appeared in Definitions \ref{Area function}, \ref{def:HardyG} and \ref{def:Hardyothers}.

We also introduce some new wave packets. Set
\begin{equation}\label{eq:eta}
\eta(\xi):=\begin{cases}\frac{\Psi(\xi)}{\sum_{j\in\mathbb{Z}}\Psi(2^{-j}\xi)^{2}} &\text{for }\xi\neq 0, \\0 &\text{for }\xi=0, \end{cases}
\end{equation}
and, for $\w\in S^{n-1}$ and $0<\sigma<1$,
\begin{equation}\label{eq:chi}
\chi_{\omega,\sigma}(\xi):=\begin{cases}\frac{\eta(\sigma\xi)\varphi_{\omega}(\xi)}{\int_{S^{n-1}}\varphi_{\nu}(\xi)^{2}\ud\nu} &\text{ for }\xi\in {\rm supp}(\theta_{\omega,\sigma}),\\0 &\text{otherwise}.\end{cases}
\end{equation}
We collect some properties of these wave packets in the following lemma.

\begin{lemma}\label{WP}
For $\omega\in S^{n-1}$ and $0<\sigma<1$, let $\gamma_{\omega,\sigma}\in\{\theta_{\omega,\sigma},\chi_{\omega,\sigma}\}$. Then $\gamma_{\omega,\sigma}\in C_{c}^{\infty}(\mathbb{R}^{n})$, and
\begin{equation}\label{eq:dyadicpar}
{\rm supp}(\gamma_{\omega,\sigma})\subseteq\big\{\xi\in\Rn\mid \tfrac{1}{2}\sigma^{-1}\leq |\xi|\leq 2\sigma^{-1}, |\hat\xi-\omega|\leq 2\sqrt{\sigma}\big\}.
\end{equation}
Moreover, for all $\alpha\in \mathbb{Z}_{+}^{n}$ and $\beta\in \mathbb{Z}_{+}$, there exists a constant $C_{\alpha,\beta}\geq 0$ such that
\begin{align}\label{daoshu}
|\langle\omega,\nabla_{\xi}\rangle^{\beta}\partial_{\xi}^{\alpha}\gamma_{\omega,\sigma}(\xi)|\leq C_{\alpha,\beta}\sigma^{-\frac{n-1}{4}+\frac{|\alpha|}{2}+\beta}
\end{align}
for all $(\xi,\omega,\sigma)\in \Sp\times (0,1)$. For each $N\geq 0$, there exists a constant $C_{N}\geq 0$ such that
\begin{align}\label{fourier}
|\mathcal{F}^{-1}(\gamma_{\omega,\sigma})(x)|\leq C_{N}\sigma^{-\frac{3n+1}{4}}(1+\sigma^{-1}|x|^{2}+\sigma^{-2}\langle\omega,x\rangle^{2})^{-N}
\end{align}
for all $(x,\omega,\sigma)\in \Sp\times (0,1)$. Finally, for all $\alpha\in\Z_{+}^{n}$ there exists a constant $C_{\alpha}\geq0$ such that
\begin{equation}\label{eq:phinu3}
\Big|\partial_{\xi}^{\alpha}\bigg(\int_{S_{n-1}}\ph_{\nu}(\xi)\ud \nu\bigg)^{-1}\Big|\leq C_{\alpha}|\xi|^{\frac{n-1}{4}-|\alpha|}
\end{equation}
for all $\xi\in\Rn$ with $|\xi|\geq1/2$.
\end{lemma}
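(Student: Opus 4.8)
The plan is to establish the five assertions in turn. The support claim \eqref{eq:dyadicpar} and the claim $\gamma_{\omega,\sigma}\in C_c^\infty(\Rn)$ are bookkeeping: since $\Psi_\tau$ is supported where $\tfrac12\tau^{-1}\le|\xi|\le2\tau^{-1}$, on $\supp(\Psi_\sigma)\cap\supp(\Psi_\tau)$ one has $\tau\in[\sigma/4,4\sigma]$, hence $\sqrt\tau\le2\sqrt\sigma$, and feeding this together with $\supp(\varphi_{\omega,\tau})\subseteq\{|\hat\xi-\omega|\le\sqrt\tau\}$ into \eqref{the def of theta} gives \eqref{eq:dyadicpar} for $\theta_{\omega,\sigma}$; for $\chi_{\omega,\sigma}$ it is immediate from \eqref{eq:chi}. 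Smoothness of $\theta_{\omega,\sigma}=\Psi_\sigma\varphi_\omega$ follows because $\varphi_\omega$ vanishes on $\{|\xi|<1/8\}$ (as $\Psi_\tau\equiv0$ there for $\tau\le4$) and is smooth away from $0$. For $\chi_{\omega,\sigma}$ one rewrites \eqref{eq:eta}--\eqref{eq:chi} as the pointwise identity $\chi_{\omega,\sigma}=\theta_{\omega,\sigma}\,h_\sigma$ on an open set containing $\supp(\theta_{\omega,\sigma})$, where $h_\sigma(\xi):=\big(\sum_{j\in\Z}\Psi(2^{-j}\sigma\xi)^2\big)^{-1}\big(\int_{S^{n-1}}\varphi_\nu(\xi)^2\,\ud\nu\big)^{-1}$; both factors of $h_\sigma$ are smooth, positive and bounded above and below there (for the second one uses $|\xi|\ge\tfrac12\sigma^{-1}$ and the elementary lower bound $\varphi_\nu(\xi)\gtrsim|\xi|^{(n-1)/4}$, valid when $|\hat\xi-\nu|\lesssim|\xi|^{-1/2}$, which follows from $\varphi,\Psi\ge0$ and $\varphi\equiv1$ near $0$). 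Since $\theta_{\omega,\sigma}$ and all its derivatives vanish on $\partial\,\supp(\theta_{\omega,\sigma})$, the product $\theta_{\omega,\sigma}h_\sigma$, extended by $0$, lies in $C_c^\infty(\Rn)$ and equals $\chi_{\omega,\sigma}$.

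For \eqref{daoshu}, the main pointwise inputs are $|\partial_\xi^\alpha\hat\xi|\lesssim|\xi|^{-|\alpha|}$ (homogeneity) and $\langle\hat\xi,\nabla_\xi\rangle\hat\xi=0$ (Euler's identity), whence $\langle\omega,\nabla_\xi\rangle\hat\xi=\langle\omega-\hat\xi,\nabla_\xi\rangle\hat\xi$ is of size $|\omega-\hat\xi|\,|\xi|^{-1}\lesssim\sigma^{3/2}$ on $\supp(\theta_{\omega,\sigma})$. Consequently, on that set, each $\partial_\xi$-derivative of $\varphi\big(\tfrac{\hat\xi-\omega}{\sqrt\tau}\big)$ costs a factor $\tau^{-1/2}|\xi|^{-1}\eqsim\sigma^{1/2}$ and each $\langle\omega,\nabla_\xi\rangle$-derivative costs $\eqsim\sigma$; combining this, via Fa\`{a} di Bruno's formula and the Leibniz rule, with $c_\tau\eqsim\tau^{-(n-1)/4}$, with $|\langle\omega,\nabla_\xi\rangle^\beta\partial_\xi^\alpha\Psi_\tau|\lesssim\sigma^{|\alpha|+\beta}$, and with the fact that the $\tau$-integral in $\varphi_\omega$ effectively runs over the dyadic range $[\sigma/4,4\sigma]$ (contributing an $O(1)$ factor) yields \eqref{daoshu} for $\theta_{\omega,\sigma}$. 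For $\chi_{\omega,\sigma}=\theta_{\omega,\sigma}h_\sigma$ it remains to bound $|\langle\omega,\nabla_\xi\rangle^\beta\partial_\xi^\alpha h_\sigma|\lesssim\sigma^{|\alpha|+\beta}$ on $\supp(\theta_{\omega,\sigma})$: the factor $\big(\sum_j\Psi(2^{-j}\sigma\xi)^2\big)^{-1}$ is a function of $\sigma\xi$ that is smooth and $\eqsim1$ where $|\sigma\xi|\eqsim1$, so each of its derivatives gains a factor $\sigma$, while $\big(\int_{S^{n-1}}\varphi_\nu(\xi)^2\,\ud\nu\big)^{-1}$ is, by the radial reduction described next, a function of $|\xi|$ that is $\eqsim1$ for $|\xi|\ge1/2$ whose derivatives gain $|\xi|^{-1}\eqsim\sigma$; the Leibniz rule then gives \eqref{daoshu} for $\chi_{\omega,\sigma}$.

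The estimate \eqref{eq:phinu3} rests on the observation that, because $\varphi$ is radial and the measure on $S^{n-1}$ is rotation-invariant, $\int_{S^{n-1}}\varphi\big(\tfrac{\hat\xi-\nu}{\sqrt\tau}\big)\,\ud\nu$ does not depend on $\hat\xi$, so that $\int_{S^{n-1}}\varphi_\nu(\xi)\,\ud\nu=F(|\xi|)$ for a scalar function $F$. Using a local parametrization of $S^{n-1}$ near $e_1$ one writes $\int_{S^{n-1}}\varphi\big(\tfrac{e_1-\mu}{\sqrt\tau}\big)\,\ud\mu=\tau^{(n-1)/2}B(\tau)$ and $c_\tau=\tau^{-(n-1)/4}A(\tau)$ with $A,B$ smooth and positive on $(0,4]$, and then the substitution $s=\tau|\xi|$ together with $\supp\Psi\subseteq[1/2,2]$ gives $F(r)=r^{-(n-1)/4}G(r)$ for $r\ge1/2$, where $G$ is smooth, $G(r)\eqsim1$, and $|G^{(k)}(r)|\lesssim r^{-k}$. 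Taking reciprocals (Fa\`{a} di Bruno applied to $1/G$) gives $F(|\xi|)^{-1}=|\xi|^{(n-1)/4}\wt G(|\xi|)$ with $\wt G$ smooth, $\eqsim1$ and $|\wt G^{(k)}(r)|\lesssim r^{-k}$, and differentiating this product, using $\big|\partial_\xi^\alpha|\xi|^{(n-1)/4}\big|\lesssim|\xi|^{(n-1)/4-|\alpha|}$ and $|\partial_\xi^\alpha(\wt G(|\xi|))|\lesssim|\xi|^{-|\alpha|}$, yields \eqref{eq:phinu3}. The same rotation argument, applied to the double integral produced by $\varphi_\nu(\xi)^2$, shows $\int_{S^{n-1}}\varphi_\nu(\xi)^2\,\ud\nu$ is radial with the properties used in the previous paragraph.

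Finally, for \eqref{fourier}, write $x=\langle\omega,x\rangle\omega+x'$ with $x'\perp\omega$ and let $\Delta'_\xi$ denote the Laplacian in the directions orthogonal to $\omega$. Integration by parts gives
\[
\big(1+\sigma^{-1}|x'|^2\big)^N\big(1+\sigma^{-2}\langle\omega,x\rangle^2\big)^N\F^{-1}(\gamma_{\omega,\sigma})(x)=(2\pi)^{-n}\int_{\Rn}e^{ix\cdot\xi}\big(1-\sigma^{-1}\Delta'_\xi\big)^N\big(1-\sigma^{-2}\langle\omega,\nabla_\xi\rangle^2\big)^N\gamma_{\omega,\sigma}(\xi)\,\ud\xi .
\]
By \eqref{daoshu} every term obtained by expanding the differential operator is $O(\sigma^{-(n-1)/4})$ (the prefactors $\sigma^{-1}$ and $\sigma^{-2}$ are exactly cancelled by the $\sigma^{1/2}$ and $\sigma$ costs of the corresponding derivatives), and by \eqref{eq:dyadicpar} the integrand is supported in a set of measure $\lesssim\sigma^{-(n+1)/2}$, so the right-hand side is $\lesssim\sigma^{-(n-1)/4}\sigma^{-(n+1)/2}=\sigma^{-(3n+1)/4}$. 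Since $|x|^2=|x'|^2+\langle\omega,x\rangle^2$ and $\sigma<1$ imply $1+\sigma^{-1}|x|^2+\sigma^{-2}\langle\omega,x\rangle^2\lesssim(1+\sigma^{-1}|x'|^2)(1+\sigma^{-2}\langle\omega,x\rangle^2)$, this gives \eqref{fourier}. The main difficulty in the whole argument lies in the treatment of the $\chi_{\omega,\sigma}$ wave packets and of \eqref{eq:phinu3}: these are the genuinely new inputs, the $\theta_{\omega,\sigma}$ estimates being essentially those of \cite{Rozendaal19}, and both depend on the radial reduction of the spherical averages $\int_{S^{n-1}}\varphi_\nu(\xi)\,\ud\nu$ and $\int_{S^{n-1}}\varphi_\nu(\xi)^2\,\ud\nu$ and on checking that these behave like powers of $|\xi|$ with each derivative gaining a full factor $|\xi|^{-1}$ rather than only $|\xi|^{-1/2}$.
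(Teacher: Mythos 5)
Your proposal is correct, and it is considerably more self-contained than the paper's proof, which for the $\theta_{\omega,\sigma}$ estimates simply cites \cite[Lemma 3.2]{Rozendaal19} and \cite[Remark 3.3]{Rozendaal19}, and for the $\chi_{\omega,\sigma}$ estimates combines the quoted bounds \eqref{eq:phinu1}--\eqref{eq:phinu2} with the elementary fact that $\eta\in C_c^\infty(\R^n)$. The two places where you and the paper genuinely diverge are worth flagging. First, the integration by parts for \eqref{fourier}: the paper applies powers of the self-adjoint operator
\[
L=\big(1+\sigma^{-1}|x|^2+\sigma^{-2}\lb\omega,x\rb^2\big)^{-1}\big(1-\sigma^{-1}\Delta_\xi-\sigma^{-2}\lb\omega,\nabla_\xi\rb^2\big),
\]
whereas you factor into $(1-\sigma^{-1}\Delta'_\xi)^N(1-\sigma^{-2}\lb\omega,\nabla_\xi\rb^2)^N$ and then reassemble the weight via the elementary product inequality; both are legitimate, the only thing to note in your version is that $\Delta'_\xi$ is controlled by the $\partial_\xi^\alpha$ part of \eqref{daoshu} (not the $\lb\omega,\nabla_\xi\rb$ part), so a second-order $\Delta'_\xi$ costs $\sigma$, matching the $\sigma^{-1}$ prefactor, which you note. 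Second, and more substantively, your proof of \eqref{eq:phinu3} is a different argument from the paper's. The paper first sandwiches $\int_{S^{n-1}}\ph_\nu(\xi)\,\ud\nu$ between two spherical caps to get the two-sided bound $\eqsim|\xi|^{-(n-1)/4}$, then says that a similar argument using \eqref{eq:phinu2} gives the \emph{anisotropic} bound $|\lb\hat\xi,\nabla_\xi\rb^\beta\partial_\xi^\alpha(\cdot)^{-1}|\lesssim|\xi|^{(n-1)/4-|\alpha|/2-\beta}$, and only at the very end invokes radiality of the spherical average to upgrade to the isotropic bound \eqref{eq:phinu3}. You instead invoke rotation invariance at the outset, reduce to a one-variable function $F$ with $F(r)=r^{-(n-1)/4}G(r)$ by a substitution $s=\tau|\xi|$, and verify directly that $G$ is smooth and $\eqsim1$ with $|G^{(k)}(r)|\lesssim r^{-k}$. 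This bypasses the anisotropic intermediate step entirely and makes the appearance of the full $|\xi|^{-|\alpha|}$ gain transparent; its cost is that one has to check that the auxiliary functions $A(\tau)=\tau^{(n-1)/4}c_\tau$ and $B(\tau)$ extend smoothly up to $\tau=0$, which in turn uses the explicit spherical parametrization that makes $A$ and $B$ into fixed smooth functions of $\tau$ on $[0,4]$. That detail is implicit in your phrase ``$A,B$ smooth and positive on $(0,4]$'' and deserves the slightly stronger statement ``smooth up to $\tau=0$'' to justify the uniform derivative bounds $|G^{(k)}|\lesssim r^{-k}$ as $r\to\infty$; once that is observed, the argument is complete.
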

\begin{proof}
For $\gamma_{\omega,\sigma}=\theta_{\omega,\sigma}$, the required statements are contained in \cite[Lemma 3.2]{Rozendaal21}. It is also shown there (see \cite[Remark 3.3]{Rozendaal21} and the arguments for \eqref{eq:phinu3} below) that, for all $\alpha\in\mathbb{Z}_{+}^{n}$ and $\beta\in\mathbb{Z}_{+}$, there exists constants $C'_{\alpha},C'_{\alpha,\beta}\geq0$ such that
\begin{equation}\label{eq:phinu1}
\Big|\partial_{\xi}^{\alpha}\bigg(\int_{S^{n-1}}\varphi_{\nu}(\xi)^{2}\ud\nu\bigg)^{-1}\Big|\leq C_{\alpha}'\sigma^{|\alpha|}
\end{equation}
and
\begin{equation}\label{eq:phinu2}
|\langle\omega,\nabla_{\xi}\rangle^{\beta}\partial_{\xi}^{\alpha}\varphi_{\omega}(\xi)|\leq C_{\alpha,\beta}'\sigma^{-\frac{n-1}{4}+\frac{|\alpha|}{2}+\beta}
\end{equation}
for all $\w\in S^{n-1}$, $0<\sigma<1$ and $\xi\in\supp(\theta_{\omega,\sigma})$.

For $\gamma_{\omega,\sigma}=\chi_{\omega,\sigma}$, we first use the properties of $\Psi$ to note that
\[
\sum_{j\in\Z}\Psi(2^{-j}\xi)^{2}\geq c
\]
for all $\xi\neq 0$, since there exists a $j\in\Z$ such that $2^{-j}|\xi|\in [\frac{3}{4},\frac{3}{2}]$. In turn, this implies that $\eta$ is well defined, and it is straightforward to see that in fact $\eta\in C^{\infty}_{c}(\Rn)$. It now follows that $\chi_{\w,\sigma}\in C^{\infty}_{c}(\Rn)$ is well defined with $\supp(\chi_{\w,\sigma})=\supp(\theta_{\w,\sigma})$. Moreover, clearly
\[
|\partial_{\xi}^{\alpha}\eta(\sigma\xi)|=\sigma^{|\alpha|}|(\partial_{\xi}^{\alpha}\eta)(\sigma\xi)|\lesssim\sigma^{|\alpha|}
\]
for all $\alpha\in\Z_{+}^{n}$, with an implicit constant independent of $\sigma>0$ and $\xi\in\Rn$. By combining this with \eqref{eq:phinu1} and \eqref{eq:phinu2}, it follows that $\chi_{\w,\sigma}$ satisfies \eqref{daoshu}. For \eqref{fourier} one now integrates by parts with respect to the operator
\begin{align*}
L:=\big(1+\sigma^{-1}|x|^{2}+\sigma^{-2}\langle\omega,x\rangle^{2}\big)^{-1}\big(1-\sigma^{-1}\Delta_{\xi}-\sigma^{-2}\langle\omega,\nabla_{\xi}\rangle^{2}\big)
\end{align*}
in the expression
\begin{align*}
\mathcal{F}^{-1}(\chi_{\omega,\sigma})(x):=\frac{1}{(2\pi)^{n}}\int_{\mathbb{R}^{n}}e^{ix\cdot\xi}{\chi}_{\omega,\sigma}(\xi)\,\ud\xi\quad(x\in \mathbb{R}^{n}),
\end{align*}
using \eqref{daoshu} and the support properties of $\chi_{\w,\sigma}$. See \cite[Lemma 4.1]{HaPoRo20} for more details.

Finally, \eqref{eq:phinu3} is obtained in the same manner as \eqref{eq:phinu1}. 
\end{proof}

We will also need the following corollary. The estimates in \eqref{eq:off} were called off-singularity bounds in \cite{HaPoRo20}, and they are useful for showing that an operator is bounded on $\HT^{p}_{FIO}(\Rn)$, for $1\leq p\leq \infty$.

\begin{coro}\label{off}
For $w,\nu\in S^{n-1}$ and $\sigma,\tau\in(0,1)$, let $K_{\sigma,\tau}^{\w,\nu}$ be the integral kernel associated with the operator
\[
f\mapsto \theta_{\omega,\sigma}(D)\chi_{\nu,\tau}(D)f
\]
on $\Sw(\Rn)$. Then for each $N\geq 0$ there exists a $C_{N}\geq 0$, independent of $\w$, $\nu$, $\sigma$ and $\tau$, such that
\begin{equation}\label{eq:off}
|K_{\sigma,\tau}^{\w,\nu}(x,y)|\leq C_{N}\min\Big(\frac{\sigma}{\tau},\frac{\tau}{\sigma}\Big)^{N}\rho^{-n}(1+\rho^{-1}d(x,\omega;y,\nu)^{2})^{-N}
\end{equation}
for all $x,y\in\Rn$, where $\rho=\min(\sigma,\tau)$.
\end{coro}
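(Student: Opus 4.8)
The plan is to write the kernel explicitly as a convolution-type expression and then exploit the two structural features that drive the estimate: the small overlap in frequency between $\theta_{\omega,\sigma}$ and $\chi_{\nu,\tau}$ when $\sigma/\tau$ is far from $1$, and the Schwartz-type decay of the wave packets from \eqref{fourier}, translated into decay in the metric $d$. Concretely, since both $\theta_{\omega,\sigma}(D)$ and $\chi_{\nu,\tau}(D)$ are Fourier multipliers, the composite operator has kernel
\[
K_{\sigma,\tau}^{\w,\nu}(x,y)=\frac{1}{(2\pi)^{n}}\int_{\Rn}e^{i(x-y)\cdot\xi}\,\theta_{\omega,\sigma}(\xi)\,\chi_{\nu,\tau}(\xi)\,\ud\xi,
\]
so $K_{\sigma,\tau}^{\w,\nu}(x,y)=\mathcal{F}^{-1}(\theta_{\omega,\sigma}\chi_{\nu,\tau})(x-y)$ depends only on $x-y$. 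The first step is to record the frequency-support gain: by \eqref{eq:dyadicpar}, $\supp(\theta_{\omega,\sigma})$ lives where $|\xi|\eqsim\sigma^{-1}$ and $\supp(\chi_{\nu,\tau})$ where $|\xi|\eqsim\tau^{-1}$, so if $\sigma/\tau\notin[\tfrac14,4]$ the product vanishes identically and there is nothing to prove. Thus we may assume $\sigma\eqsim\tau$, whence $\rho=\min(\sigma,\tau)\eqsim\sigma\eqsim\tau$ and the factor $\min(\sigma/\tau,\tau/\sigma)^{N}\eqsim 1$; the content of the estimate is therefore purely the decay statement $|K_{\sigma,\tau}^{\w,\nu}(x-y)|\lesssim_{N}\rho^{-n}(1+\rho^{-1}d(x,\omega;y,\nu)^{2})^{-N}$.

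For that decay estimate I would integrate by parts in the defining integral against the operator
\[
L:=\bigl(1+\rho^{-1}|x-y|^{2}+\rho^{-2}\langle\omega,x-y\rangle^{2}\bigr)^{-1}\bigl(1-\rho^{-1}\Delta_{\xi}-\rho^{-2}\langle\omega,\nabla_{\xi}\rangle^{2}\bigr),
\]
exactly as in the proof of \eqref{fourier}, noting that $L^{T}e^{i(x-y)\cdot\xi}=e^{i(x-y)\cdot\xi}$. Each application of $L^{T}$ costs at most a factor of order one in view of the derivative bounds \eqref{daoshu}: the product $\theta_{\omega,\sigma}\chi_{\nu,\tau}$ still satisfies $|\langle\omega,\nabla_\xi\rangle^{\beta}\partial_\xi^{\alpha}(\theta_{\omega,\sigma}\chi_{\nu,\tau})(\xi)|\lesssim \rho^{-\frac{n-1}{2}+\frac{|\alpha|}{2}+\beta}$ by the Leibniz rule (here one uses $\sigma\eqsim\tau\eqsim\rho$ and that $|\hat\xi-\omega|\lesssim\sqrt\rho$ on the joint support, so directional derivatives along $\nu$ are controlled by those along $\omega$ up to constants — this is where the restriction that $d$-decay is centered at $(x,\omega)$ rather than $(y,\nu)$ needs a small symmetry argument, since by interchanging the roles one gets the estimate centered at $(y,\nu)$, and the two centers are comparable once $\theta_{\omega,\sigma}\chi_{\nu,\tau}\neq0$ because then $|\omega-\nu|\lesssim\sqrt\rho$). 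After $M$ integrations by parts, the $\xi$-integral is over a region of measure $\eqsim\rho^{-n}$ on which the integrand is $\lesssim\rho^{M}\cdot\rho^{-M}=O(1)$ times the prefactor, yielding
\[
|K_{\sigma,\tau}^{\w,\nu}(x-y)|\lesssim_{M}\rho^{-n}\bigl(1+\rho^{-1}|x-y|^{2}+\rho^{-2}\langle\omega,x-y\rangle^{2}\bigr)^{-M}.
\]
Finally I would convert this into the claimed form using the quasi-metric description $d(x,\omega;y,\nu)^{2}\eqsim|\langle\omega,x-y\rangle|+|x-y|^{2}+|\omega-\nu|^{2}$ from Section \ref{subsec:metric}: since $|\omega-\nu|^{2}\lesssim\rho$ on the support, one has $\rho^{-1}d(x,\omega;y,\nu)^{2}\lesssim 1+\rho^{-1}|x-y|^{2}+\rho^{-1}|\langle\omega,x-y\rangle|\lesssim 1+\rho^{-1}|x-y|^{2}+\rho^{-2}\langle\omega,x-y\rangle^{2}$ (splitting $\rho^{-1}|\langle\omega,x-y\rangle|$ into the cases $|\langle\omega,x-y\rangle|\le\rho$ and $>\rho$), so a power $M=2N$ of the Euclidean expression dominates the $N$-th power of $1+\rho^{-1}d^{2}$.

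The main obstacle is bookkeeping rather than conceptual: one must check that the Leibniz-rule estimate for the product wave packet $\theta_{\omega,\sigma}\chi_{\nu,\tau}$ genuinely holds with the stated power of $\rho$ uniformly in all four parameters, and in particular that mixing directional derivatives along $\omega$ (coming from $L$) with a symbol $\chi_{\nu,\tau}$ adapted to the direction $\nu$ does not lose powers of $\rho$; this is precisely the point where one uses that the product is supported where $|\omega-\nu|\lesssim\sqrt\rho$, so that $\langle\omega,\nabla_\xi\rangle$ and $\langle\nu,\nabla_\xi\rangle$ differ by a tangential derivative that is harmless at scale $\rho^{-1/2}$. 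Once this uniform derivative bound is in hand, the integration by parts and the metric conversion are routine, and letting $M\to\infty$ (i.e.\ choosing $M$ large depending on $N$) gives the result for every $N\ge0$.
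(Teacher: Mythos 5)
Your proof is correct and takes the same route the paper's one-line proof defers to: integrate by parts against the anisotropic operator $L$ adapted to the direction $\omega$, control the resulting derivatives via the wave packet bounds \eqref{daoshu} of Lemma \ref{WP} (using $|\omega-\nu|\lesssim\sqrt\rho$ on the joint support to trade $\langle\omega,\nabla_\xi\rangle$ for $\langle\nu,\nabla_\xi\rangle$), and convert the resulting anisotropic Euclidean decay into decay in $d$ via the quasi-metric equivalence $d(x,\omega;y,\nu)^2\eqsim|\langle\omega,x-y\rangle|+|x-y|^2+|\omega-\nu|^2$. Your observation that $\theta_{\omega,\sigma}\chi_{\nu,\tau}\equiv 0$ whenever $\sigma/\tau\notin[1/4,4]$, because both symbols are supported in disjoint dyadic frequency shells, renders the factor $\min(\sigma/\tau,\tau/\sigma)^{N}$ automatic and is a clean way to dispense with it; also note that the ``small symmetry argument'' you mention is not actually needed since $d$ is a genuine metric, hence symmetric in its arguments.
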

\begin{proof}
To obtain \eqref{eq:off}, it suffices to repeat the arguments in \cite[Proposition 3.6]{Rozendaal21} (see also \cite[Remark 3.7]{Rozendaal21} and \cite[Theorem 5.1]{HaPoRo20}), which rely only on integration by parts and on the properties of the wave packets in Lemma \ref{WP}.
\end{proof}

\section{The Littlewood--Paley g function characterization}\label{sec3}

This section is devoted to showing that $\HT^{1}_{FIO}(\Rn)=\HT^{1}_{FIO,G}(\Rn)$. By \cite[Proposition 2.1 and Remark 2.2]{AuHoMa12} (see also \cite[Equation (2.9)]{HaPoRo20}) one has $\HT^{1}_{FIO}(\Rn)\subseteq \HT^{1}_{FIO,G}(\Rn)$, so it suffices to show that $\mathcal{H}_{FIO,G}^{1}(\mathbb{R}^{n})\subseteq \mathcal{H}_{FIO}^{1}(\mathbb{R}^{n})$. To do so, we first collect some preliminary results which will be used to prove the required embedding.

\subsection{Preliminary results}\label{subsec:preliminary}

In this subsection we first prove a useful equivalent characterization of $\HT^{1}_{FIO,G}(\Rn)$, from which we derive a Sobolev embedding for $\HT^{1}_{FIO,G}(\Rn)$. Then we prove a technical lemma which will be used afterwards to obtain a pointwise inequality for a maximal function of Peetre type. This maximal function will in turn play a crucial role in the proof of the main result of this section.

\begin{proposition}\label{prop:equivalent}
An $f\in\Sw'(\Rn)$ satisfies $f\in \mathcal{H}_{FIO,G}^{1}(\mathbb{R}^{n})$ if and only if $q(D)f\in L^{1}(\Rn)$, $\ph_{\w}(D)f\in H^{1}(\Rn)$ for almost all $\w\in S^{n-1}$, and
\[
\int_{S^{n-1}}\|\varphi_{\omega}(D)f\|_{H^{1}(\mathbb{R}^{n})}\,\ud \omega<\infty.
\]
Moreover, in this case one has
\[
\|f\|_{\mathcal{H}_{FIO,G}^{1}(\mathbb{R}^{n})}\eqsim \int_{S^{n-1}}\|\varphi_{\omega}(D)f\|_{H^{1}(\mathbb{R}^{n})}\,\ud \omega+\|q(D)f\|_{L^{1}(\mathbb{R}^{n})}.
\]
\end{proposition}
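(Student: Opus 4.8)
The statement asks to identify the norm $\|G(f)\|_{L^1(S^*(\RN))}$ with the averaged $H^1$-norm $\int_{S^{n-1}}\|\ph_\w(D)f\|_{H^1(\RN)}\,\ud\w$, modulo the common term $\|q(D)f\|_{L^1(\RN)}$. The natural approach is to compare, for each fixed $\w$, the function $G(f)(\cdot,\w)$ with the classical Littlewood--Paley $g$-function of $\ph_\w(D)f$. Recall from \eqref{the def of theta} that $\theta_{\w,\sigma}(\xi)=\Psi_\sigma(\xi)\ph_\w(\xi)$, so that $\theta_{\w,\sigma}(D)f=\Psi_\sigma(D)\big(\ph_\w(D)f\big)$; hence for fixed $\w$,
\[
G(f)(x,\w)=\Big(\int_0^1|\Psi_\sigma(D)(\ph_\w(D)f)(x)|^2\,\tfrac{\ud\sigma}{\sigma}\Big)^{1/2},
\]
which is precisely (a truncated version of) the classical continuous $g$-function $g_\Psi(\ph_\w(D)f)(x)$ associated with the Littlewood--Paley profile $\Psi$. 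Integrating over $x$ and then over $\w$ gives $\|G(f)\|_{L^1(S^*(\RN))}=\int_{S^{n-1}}\|g_\Psi^{\mathrm{trunc}}(\ph_\w(D)f)\|_{L^1(\RN)}\,\ud\w$, so the proposition reduces to the classical fact that the $g$-function norm is equivalent to the $H^1$-norm, applied to $g=\ph_\w(D)f$, uniformly in $\w$.

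First I would deal with the truncation: the integral in $G$ runs only over $\sigma\in(0,1)$, whereas the classical $g$-function integrates over all $\sigma>0$. Because $\ph_\w$ is supported in an annulus $|\xi|\eqsim$ (something), and more importantly because $\Psi_\sigma(D)\ph_\w(D)$ vanishes unless $\sigma^{-1}\eqsim|\xi|$ lies in the support of $\ph_\w$, the contribution of $\sigma\geq 1$ involves only the low-frequency part $|\xi|\lesssim 1$, which is controlled by $q(D)f$. Concretely, I would split $\ph_\w(D)f = q(D)\ph_\w(D)f + (1-q(D))\ph_\w(D)f$ using that $q\equiv1$ on $|\xi|\le 2$; the high-frequency piece has $g$-function supported in $\sigma<1$ (so the truncation is harmless there), and the low-frequency piece is a fixed nice Fourier multiplier applied to $q(D)f$, handled by $L^1$-boundedness of the (compactly supported, smooth) multiplier together with the crude bound $\int_{S^{n-1}}\|\ph_\w\|\,\ud\w<\infty$ from \eqref{eq:phinu3}. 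One also needs that $\ph_\w(D)f\in H^1$ is genuinely characterized by its $g$-function \emph{plus} a low-frequency term, or else that $\ph_\w(D)f$ itself has frequency support bounded away from $0$; I would use the standard local Hardy space $h^1$ characterization, or simply absorb the discrepancy into $\|q(D)f\|_{L^1}$ and the harmless multiplier $(1-q(D))\Phi$-type corrections.

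The remaining ingredient is the uniform-in-$\w$ equivalence $\|g_\Psi(h)\|_{L^1}\eqsim\|h\|_{H^1}$ for $h=\ph_\w(D)f$. The direction $\|g_\Psi(h)\|_{L^1}\lesssim\|h\|_{H^1}$ is the classical boundedness of the $g$-function on $H^1$ (e.g. Stein, or via the vector-valued Calderón--Zygmund theory), and the constant depends only on $\Psi$, not on $\w$ — this is exactly the content of the reference \cite[Proposition 2.1 and Remark 2.2]{AuHoMa12} cited in the section preamble, and it yields the inclusion $\HT^1_{FIO}\subseteq\HT^1_{FIO,G}$ after integrating in $\w$. For the reverse, $\|h\|_{H^1}\lesssim\|g_\Psi(h)\|_{L^1}$, I would invoke the Calderón reproducing formula $h=\int_0^\infty\Psi_\sigma(D)^2 h\,\tfrac{\ud\sigma}{\sigma}$ coming from \eqref{eq:Psi} and the standard subordination/square-function argument showing that the $g$-function controls $H^1$ (again with $\Psi$-dependent but $\w$-independent constant); alternatively one can cite the corresponding statement from \cite{AuHoMa12} directly. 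Then Fubini--Tonelli in $(x,\w)$ completes the equivalence of norms.

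**Main obstacle.** The genuinely delicate point is not the per-$\w$ classical equivalence — that is standard — but rather the careful bookkeeping at low frequencies: making sure the truncation $\sigma\in(0,1)$ in $G$ and the separate low-frequency term $q(D)f$ together exactly reproduce the global $g$-function (equivalently the full $H^1$-norm) of each $\ph_\w(D)f$, uniformly in $\w$, and that the $\w$-integral of everything converges (which relies on the decay $\int_{S^{n-1}}\|\ph_\w(D)f\|_{H^1}\,\ud\w<\infty$ being the correct finiteness hypothesis). I would handle this by a clean decomposition $\ph_\w=\ph_\w q+\ph_\w(1-q)$ as above, verifying that $\ph_\w(1-q)$ is supported in $|\xi|\ge1$ so its $g$-function is automatically $\sigma$-truncated, and estimating the $\ph_\w q$ piece by the uniform multiplier bounds from Lemma \ref{WP} and \eqref{eq:phinu3}.
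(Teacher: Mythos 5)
Your proposal is correct and follows essentially the same route as the paper. Both arguments rest on the identity $\theta_{\omega,\sigma}(D)f=\Psi_{\sigma}(D)\bigl(\varphi_{\omega}(D)f\bigr)$, which exhibits $G(f)(\cdot,\omega)$ as a truncated classical $g$-function of $\varphi_{\omega}(D)f$, plus the classical $g$-function characterization of $H^{1}(\RN)$ (with $\Psi$-dependent, hence $\omega$-uniform, constants). The paper packages the argument slightly differently: it introduces the untruncated $G'(f)(x,\omega)=\bigl(\int_{0}^{\infty}|\theta_{\omega,\sigma}(D)f(x)|^{2}\tfrac{\ud\sigma}{\sigma}\bigr)^{1/2}$, notes $\theta_{\omega,\sigma}\equiv0$ for $\sigma>16$ (since $\varphi_{\omega}$ vanishes for $|\xi|<1/8$) and $\theta_{\omega,\sigma}(D)(1-q)(D)=0$ for $\sigma>1$, and then bounds $\|G'(f)-G(f)\|_{L^{1}(\Sp)}$ directly by $\|q(D)f\|_{L^{1}(\RN)}$ using the kernel estimate \eqref{fourier}; whereas you decompose $\varphi_{\omega}(D)f$ into its $q(D)$ and $(1-q)(D)$ pieces and observe that the high-frequency piece has automatically truncated $g$-function. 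These two bookkeeping schemes are interchangeable, and the remaining low-frequency estimate is identical. One small clarification for your write-up: since $\varphi_{\omega}$ vanishes near $\xi=0$, the distribution $\varphi_{\omega}(D)f$ has Fourier support away from the origin, so there is no need to pass to the local Hardy space $h^{1}$ or otherwise worry about polynomial ambiguities in the $g$-function characterization — your "or else" alternative is in fact what happens, and the standard $H^{1}$ characterization applies directly.
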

\begin{proof}
By the Littlewood--Paley g function characterization of $H^{1}(\Rn)$ (see \cite{Stein93} or \cite{Uchiyama85}), it suffices to show that an $f\in\Sw'(\Rn)$ satisfies $f\in \mathcal{H}_{FIO,G}^{1}(\mathbb{R}^{n})$ if and only if $q(D)f\in L^{1}(\Rn)$ and $G'(f)\in L^{1}(\Sp)$, with
\[
\|f\|_{\mathcal{H}_{FIO,G}^{1}(\mathbb{R}^{n})}\eqsim \|G'(f)\|_{L^{1}(\Sp)}+\|q(D)f\|_{L^{1}(\Rn)}.
\]
Here
\[
G'(f)(x,\w):=\bigg(\int_{0}^{\infty}|\theta_{\w,\sigma}(D)f(x)|^{2}\frac{\ud\sigma}{\sigma}\bigg)^{1/2}
\]
for $(x,\w)\in\Sp$. In turn, since $G(f)\leq G'(f)$ pointwise, it suffices to prove that each $f\in\HT^{1}_{FIO,G}(\Rn)$ satisfies $G'(f)\in L^{1}(\Sp)$ and $\|G'(f)\|_{L^{1}(\Sp)}\lesssim \|f\|_{\HT^{1}_{FIO,G}(\Rn)}$.

Let $f\in\HT^{1}_{FIO,G}(\Rn)$ and note that
\[
\theta_{\w,\sigma}(\xi)=\Psi_{\sigma}(\xi)\ph_{\w}(\xi)=\int_{0}^{4}\Psi_{\sigma}(\xi)\Psi_{\tau}(\xi)\ph_{\w,\tau}(\xi)\frac{\ud\tau}{\tau}=0
\]
for all $\xi\in\Rn$ if $\sigma>16 $, since for all $\tau>0$ one has $\Psi_{\tau}(\xi)=0$ if $|\xi|\notin[\tau^{-1}/2,2\tau^{-1}]$. Hence one in fact has
\[
G'(f)(x,\w)=\bigg(\int_{0}^{16}|\theta_{\w,\sigma}(D)f(x)|^{2}\frac{\ud\sigma}{\sigma}\bigg)^{1/2}\leq G(f)(x,\w)+\bigg(\int_{1}^{16}|\theta_{\w,\sigma}(D)f(x)|^{2}\frac{\ud\sigma}{\sigma}\bigg)^{1/2}
\]
for all $(x,\w)\in\Sp$, and it suffices to show that
\[
\int_{\Sp}\bigg(\int_{1}^{16}|\theta_{\w,\sigma}(D)f(x)|^{2}\frac{\ud\sigma}{\sigma}\bigg)^{1/2}\ud x\ud\w\lesssim \|f\|_{\HT^{1}_{FIO,G}(\Rn)}.
\]
But this is proved by noting that $\theta_{\w,\sigma}(D)(1-q)(D)f=0$ for $\sigma>1$, since $q(\xi)=1$ for $|\xi|\leq 2$, and then reasoning as follows:
\begin{align*}
&\int_{\Sp}\bigg(\int_{1}^{16}|\theta_{\w,\sigma}(D)f(x)|^{2}\frac{\ud\sigma}{\sigma}\bigg)^{1/2}\ud x\ud\w=\int_{\Sp}\bigg(\int_{1}^{16}|\theta_{\w,\sigma}(D)q(D)f(x)|^{2}\frac{\ud\sigma}{\sigma}\bigg)^{1/2}\ud x\ud\w\\
&\lesssim \int_{\mathbb{R}^{n}}\sup_{\frac{1}{2}\leq\sigma\leq 16,\omega\in S^{n-1}}|\theta_{\w,\sigma}(D)q(D)f(x)|\ud x\lesssim \int_{\mathbb{R}^{n}}\int_{\mathbb{R}^{n}}\frac{|q(D)f(y)|}{(1+|x-y|)^{n+1}}\,\ud y\ud x\\
&\lesssim \|q(D)f\|_{L^{1}(\mathbb{R}^{n})}\leq \|f\|_{\HT^{1}_{FIO,G}(\Rn)}.
\end{align*}
Note that the bounds for $\F^{-1}(\theta_{\w,\sigma})$ that we used in the penultimate line are contained in \eqref{fourier}.
\end{proof}

We can now derive a useful Sobolev embedding for $\HT^{1}_{FIO,G}(\Rn)$, which is formulated in terms of the local real Hardy space $\HT^{1}(\Rn)$ defined by Goldberg \cite{Goldberg79}. Choose a function $r\in C^{\infty}_{c}(\Rn)$ such that $r(\xi)=1$ if $|\xi|\leq 1$. Then $\HT^{1}(\Rn)$ consists of all $f\in\Sw'(\Rn)$ such that $r(D)f\in L^{1}(\Rn)$ and $(1-r)(D)f\in H^{1}(\Rn)$, with the norm
\begin{equation}\label{eq:localhardy}
\|f\|_{\HT^{1}(\Rn)}:= \|r(D)f\|_{L^{1}(\Rn)}+\|(1-r)(D)f\|_{H^{1}(\Rn)}.
\end{equation}
Up to norm equivalence, this definition does not depend on the specific choice of $r$.

\begin{proposition}\label{prop:Sobolev}
The map $\lb D\rb^{-\frac{n-1}{4}}:\mathcal{H}_{FIO,G}^{1}(\mathbb{R}^{n})\to \HT^{1}(\Rn)$ is bounded. Hence $\mathcal{H}_{FIO,G}^{1}(\mathbb{R}^{n})\subseteq W^{-\frac{n-1}{4},1}(\Rn)$.
\end{proposition}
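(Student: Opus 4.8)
The plan is to bound $\|\lb D\rb^{-\frac{n-1}{4}}f\|_{\HT^{1}(\Rn)}$ through the expression \eqref{eq:localhardy}, for a well-chosen radial cutoff $r\in C_{c}^{\infty}(\Rn)$, by splitting $\lb D\rb^{-\frac{n-1}{4}}f=r(D)\lb D\rb^{-\frac{n-1}{4}}f+(1-r)(D)\lb D\rb^{-\frac{n-1}{4}}f$, controlling the low-frequency term by $\|q(D)f\|_{L^{1}(\Rn)}$ and the high-frequency term by $\int_{S^{n-1}}\|\ph_{\w}(D)f\|_{H^{1}(\Rn)}\,\ud\w$; by Proposition \ref{prop:equivalent}, both quantities are $\lesssim\|f\|_{\mathcal{H}_{FIO,G}^{1}(\mathbb{R}^{n})}$. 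I would take $r$ with $r(\xi)=1$ for $|\xi|\leq1$ and $r(\xi)=0$ for $|\xi|\geq2$. Since $q(\xi)=1$ for $|\xi|\leq2$ this forces $r=rq$, so $r(D)\lb D\rb^{-\frac{n-1}{4}}f=r(D)\lb D\rb^{-\frac{n-1}{4}}q(D)f$; as the multiplier $\xi\mapsto r(\xi)\lb\xi\rb^{-\frac{n-1}{4}}$ is smooth with compact support, its convolution kernel is Schwartz, and Young's inequality yields $\|r(D)\lb D\rb^{-\frac{n-1}{4}}f\|_{L^{1}(\Rn)}\lesssim\|q(D)f\|_{L^{1}(\Rn)}$.

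For the high-frequency term the central device is the reconstruction multiplier
\[
m(\xi):=\lb\xi\rb^{-\frac{n-1}{4}}(1-r(\xi))\Big(\int_{S^{n-1}}\ph_{\nu}(\xi)\,\ud\nu\Big)^{-1},
\]
with $m(\xi):=0$ where $r(\xi)=1$. Recall from the proof of Lemma \ref{WP} that $\int_{S^{n-1}}\ph_{\nu}(\xi)\,\ud\nu\eqsim|\xi|^{-\frac{n-1}{4}}$ for $|\xi|\geq\tfrac12$; in particular this integral is smooth and strictly positive on the open set $\{|\xi|>\tfrac12\}$, which contains $\operatorname{supp}(1-r)\subseteq\{|\xi|\geq1\}$, so $m$ is well defined and $m\in C^{\infty}(\Rn)$ (a product of smooth functions on $\{|\xi|>\tfrac12\}$, and identically zero on $\{|\xi|<1\}$). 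I would then check the Mikhlin--H\"ormander estimates $|\partial_{\xi}^{\alpha}m(\xi)|\lesssim\lb\xi\rb^{-|\alpha|}$ for all $\alpha\in\Z_{+}^{n}$: on $\{|\xi|\leq2\}$ this is immediate from smoothness and compactness, while on $\{|\xi|\geq2\}$, where $m(\xi)=\lb\xi\rb^{-\frac{n-1}{4}}(\int_{S^{n-1}}\ph_{\nu}(\xi)\,\ud\nu)^{-1}$, it follows by combining \eqref{eq:phinu3} with the elementary bounds $|\partial_{\xi}^{\gamma}\lb\xi\rb^{-\frac{n-1}{4}}|\lesssim\lb\xi\rb^{-\frac{n-1}{4}-|\gamma|}$ and the Leibniz rule, the powers $|\xi|^{\pm\frac{n-1}{4}}$ cancelling. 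Hence $m(D)$ is a Calder\'on--Zygmund operator, in particular bounded on $H^{1}(\Rn)$.

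Next, given $f\in\mathcal{H}_{FIO,G}^{1}(\mathbb{R}^{n})$, Proposition \ref{prop:equivalent} provides $\ph_{\w}(D)f\in H^{1}(\Rn)$ for almost all $\w$ together with $\int_{S^{n-1}}\|\ph_{\w}(D)f\|_{H^{1}(\Rn)}\,\ud\w<\infty$, so $F:=\int_{S^{n-1}}\ph_{\w}(D)f\,\ud\w$ converges as an $H^{1}(\Rn)$-valued Bochner integral with $\|F\|_{H^{1}(\Rn)}\leq\int_{S^{n-1}}\|\ph_{\w}(D)f\|_{H^{1}(\Rn)}\,\ud\w$. Pairing against Schwartz functions and using Fubini gives $\F F=\big(\int_{S^{n-1}}\ph_{\w}(\cdot)\,\ud\w\big)\F f$, and by construction $m(\xi)\int_{S^{n-1}}\ph_{\w}(\xi)\,\ud\w=\lb\xi\rb^{-\frac{n-1}{4}}(1-r(\xi))$ for every $\xi\in\Rn$ (both sides vanish where $r(\xi)=1$; where $r(\xi)<1$ one has $|\xi|>1$, so $\int_{S^{n-1}}\ph_{\nu}(\xi)\,\ud\nu\neq0$ and the definition of $m$ makes the two sides agree). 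Therefore $m(D)F=(1-r)(D)\lb D\rb^{-\frac{n-1}{4}}f$, and
\[
\|(1-r)(D)\lb D\rb^{-\frac{n-1}{4}}f\|_{H^{1}(\Rn)}=\|m(D)F\|_{H^{1}(\Rn)}\lesssim\|F\|_{H^{1}(\Rn)}\leq\int_{S^{n-1}}\|\ph_{\w}(D)f\|_{H^{1}(\Rn)}\,\ud\w.
\]
Adding the low- and high-frequency estimates and invoking \eqref{eq:localhardy} shows that $\lb D\rb^{-\frac{n-1}{4}}:\mathcal{H}_{FIO,G}^{1}(\mathbb{R}^{n})\to\HT^{1}(\Rn)$ is bounded; since $\HT^{1}(\Rn)\hookrightarrow L^{1}(\Rn)$, this also gives $\lb D\rb^{-\frac{n-1}{4}}f\in L^{1}(\Rn)$, i.e.~$\mathcal{H}_{FIO,G}^{1}(\mathbb{R}^{n})\subseteq W^{-\frac{n-1}{4},1}(\Rn)$.

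The crux of the argument is the symbol estimate for $m$, which rests squarely on the sharp bound \eqref{eq:phinu3} for the reciprocal of $\int_{S^{n-1}}\ph_{\nu}(\xi)\,\ud\nu$ and on choosing $r$ so that $1-r$ is supported away from the low-frequency region where that integral may degenerate; everything else — Young's inequality, Mikhlin--H\"ormander $H^{1}$-boundedness, convergence of the $H^{1}(\Rn)$-valued integral defining $F$, and the embedding $\HT^{1}(\Rn)\subseteq L^{1}(\Rn)$ — is classical and causes no difficulty.
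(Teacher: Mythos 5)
Your proof is correct and follows essentially the same strategy as the paper's: split into low and high frequencies via the local Hardy space norm \eqref{eq:localhardy}, control the low-frequency piece by a Schwartz kernel bound, and control the high-frequency piece by writing $(1-r)(D)\lb D\rb^{-\frac{n-1}{4}}f = \int_{S^{n-1}}m(D)\ph_\w(D)f\,\ud\w$ with $m$ a Mikhlin multiplier built from $\big(\int_{S^{n-1}}\ph_\nu\,\ud\nu\big)^{-1}$ and \eqref{eq:phinu3}. The only (cosmetic) difference is your choice of $r$ strictly inside $\{q=1\}$ so that $r=rq$ — which makes the low-frequency estimate slightly more transparent than the paper's choice $r=q$ — together with the explicit Bochner-integral justification that the paper leaves implicit.
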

\begin{proof}
For the first statement we let $r:=q$ and fix $f\in\HT^{1}_{FIO,G}(\Rn)$. Then $q(D)\lb D\rb^{-\frac{n-1}{4}}f\in L^{1}(\Rn)$ with
\[
\|q(D)\lb D\rb^{-\frac{n-1}{4}}f\|_{L^{1}(\Rn)}\lesssim \|q(D)f\|_{L^{1}(\Rn)}\leq \|f\|_{\HT^{1}_{FIO,G}(\Rn)}.
\]
To show that $(1-q)(D)\lb D\rb^{-\frac{n-1}{4}} f\in H^{1}(\Rn)$, define $m\in C^{\infty}(\Rn)$ by
\begin{equation}\label{eq:defm}
m(\xi):=\begin{cases}
(1-q(\xi))\lb \xi\rb^{-\frac{n-1}{4}}\big(\int_{S^{n-1}}\ph_{\nu}(\xi)\ud \nu\big)^{-1}&\text{if }|\xi|\geq 1/2,\\
0&\text{otherwise}.
\end{cases}
\end{equation}
It follows from \eqref{eq:phinu3} that $m(D):H^{1}(\Rn)\to H^{1}(\Rn)$ is continuous, and one has
\begin{equation}\label{eq:reproform}
(1-q)(D)\lb D\rb^{-\frac{n-1}{4}}f=\int_{S^{n-1}}m(D)\ph_{\w}(D)f\ud\w
\end{equation}
since $q(\xi)=1$ if $|\xi|\leq 2$. Hence $(1-q)(D)\lb D\rb^{-\frac{n-1}{4}}f\in H^{1}(\Rn)$ with
\begin{align*}
\|(1-q)(D)\lb D\rb^{-\frac{n-1}{4}}f\|_{H^{1}(\Rn)}&=\Big\|\int_{S^{n-1}}m(D)\ph_{\w}(D)f\ud\w\Big\|_{H^{1}(\Rn)}\\
&\leq\int_{S^{n-1}}\|m(D)\ph_{\w}(D)f\|_{H^{1}(\Rn)}\ud\w\lesssim \int_{S^{n-1}}\|\ph_{\w}(D)f\|_{H^{1}(\Rn)}\ud\w\lesssim \|f\|_{\HT^{1}_{FIO,G}(\Rn)},
\end{align*}
where for the final inequality we used Proposition \ref{prop:equivalent}.

The second statement of the proposition now follows from the inclusion $H^{1}(\Rn)\subseteq L^{1}(\Rn)$.
\end{proof}

\begin{remark}\label{rem:Hardy}
The same embedding as in Proposition \ref{prop:Sobolev} was obtained for $\HT^{1}_{FIO}(\Rn)$ in \cite[Theorem 7.4]{HaPoRo20}, with a somewhat similar proof. However, we cannot appeal to that result here since we have not yet shown that $\HT^{1}_{FIO,G}(\Rn)\subseteq\HT^{1}_{FIO}(\Rn)$ (and in fact we will use Proposition \ref{prop:Sobolev} to prove this inclusion).
\end{remark}

We will also need the following technical lemma.

\begin{lemma}\label{lem1}
Let $0<r\leq1$, and let $\{b_{l}\}_{l=1}^{\infty}\subseteq[0,\infty]$ and $\{d_{l}\}_{l=1}^{\infty}\subseteq[0,\infty)$ be two sequences. Assume that there exist $C_{0},N_{0}>0$ such that
\begin{equation}\label{a2}
d_{l}\leq C_{0}2^{lN_{0}}\quad(l\in\N),
\end{equation}
and that for each $N>N_{0}$ there exists a $C_{N}>0$ such that
\begin{equation}\label{eq:a2a}
d_{l}\leq C_{N} \sum_{j=1}^{\infty}2^{-|j-l|N}b_{j}d_{j}^{1-r}\quad(l\in\N).
\end{equation}
Then
\[
d_{l}^{r}\leq C_{N} \sum_{j=1}^{\infty}2^{-|j-l|Nr}b_{j}\quad(l\in\N).
\]
\begin{proof}
The proof of Lemma \ref{lem1} is essentially contained in \cite{Rychkov99}, but for the reader's convenience we give a simple proof here. Without loss of generality, we may assume that $\{d_{l}\}_{l=1}^{\infty}$ is not the zero sequence, and then \eqref{a2} shows that $D_{l,N}:=\sup_{k\in\N}2^{-|l-k|N}d_{k}\in(0,\infty)$ for all $l\in\N$ and $N>N_{0}$. Now \eqref{eq:a2a} yields
\begin{align*}
D_{l,N}
&\leq
\sup_{k\in\N}2^{-|l-k|N}C_{N}\sum_{j=1}^{\infty}2^{-|j-k|N}b_{j}d_{j}^{1-r}\leq
C_{N}\sum_{j = 1}^{\infty}2^{-|j-l|N} b_{j}d_{j}^{1-r}\\
&\leq
C_{N}\sum_{j = 1}^{\infty}2^{-|j-l|N}b_{j}2^{|j-l|N(1-r)}D_{l,N}^{1-r}=
C_{N}\sum_{j = 1}^{\infty}2^{-|j-l|Nr}b_{j}D_{l,N}^{1-r}
\end{align*}
for all $j\in\N$. Multiplying by $D_{l,N}^{r-1}$, we obtain from this the required conclusion:
\[
d_{l}^{r}\leq D_{l,N}^{r}\leq C_{N}\sum_{j = 1}^{\infty}2^{-|j-l|Nr}b_{j}.\qedhere
\]
\end{proof}
\end{lemma}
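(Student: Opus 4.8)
The plan is to run the standard iteration trick (essentially Rychkov's) built around an auxiliary maximal-type sequence. First I would dispose of trivialities: if $\{d_{l}\}_{l=1}^{\infty}$ is the zero sequence there is nothing to prove, and if $r=1$ the asserted inequality is already hypothesis \eqref{eq:a2a}; so from now on I assume $0<r<1$ and that $d_{k}>0$ for at least one $k\in\N$. Fix $N>N_{0}$ and introduce
\[
D_{l,N}:=\sup_{k\in\N}2^{-|l-k|N}d_{k}\qquad(l\in\N).
\]

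The crucial preliminary point --- and the one I expect to be the main obstacle --- is to verify that $D_{l,N}\in(0,\infty)$, since the argument ends with a division by $D_{l,N}^{1-r}$. Positivity is immediate because $D_{l,N}\ge d_{l}$ and because some $d_{k}$ is strictly positive. Finiteness is exactly where the otherwise unused growth bound \eqref{a2} is needed: using $N\ge N_{0}\ge 0$ and the elementary inequality $k-|l-k|\le l$, one gets $2^{-|l-k|N}d_{k}\le C_{0}2^{-|l-k|N_{0}}2^{kN_{0}}\le C_{0}2^{lN_{0}}$ for every $k$, whence $D_{l,N}\le C_{0}2^{lN_{0}}<\infty$.

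Next I would insert hypothesis \eqref{eq:a2a} into the supremum. Applying \eqref{eq:a2a} with $l$ replaced by an arbitrary $k$, multiplying by $2^{-|l-k|N}$, and using the triangle inequality $|l-k|+|j-k|\ge|j-l|$, I obtain $2^{-|l-k|N}d_{k}\le C_{N}\sum_{j}2^{-|j-l|N}b_{j}d_{j}^{1-r}$; taking the supremum over $k$ gives $D_{l,N}\le C_{N}\sum_{j}2^{-|j-l|N}b_{j}d_{j}^{1-r}$. Now I absorb the tail: writing $d_{j}^{1-r}=\big(2^{-|j-l|N}d_{j}\big)^{1-r}2^{|j-l|N(1-r)}\le D_{l,N}^{1-r}2^{|j-l|N(1-r)}$, the right-hand side becomes $C_{N}D_{l,N}^{1-r}\sum_{j}2^{-|j-l|Nr}b_{j}$. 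Dividing by $D_{l,N}^{1-r}$, which is legitimate precisely by the preceding paragraph, yields $D_{l,N}^{r}\le C_{N}\sum_{j}2^{-|j-l|Nr}b_{j}$, and since $d_{l}\le D_{l,N}$ this closes the argument with the same constant $C_{N}$. The whole proof is elementary bookkeeping with the triangle inequality; the only step requiring genuine care is the finiteness of $D_{l,N}$, which must be secured before any division is carried out.
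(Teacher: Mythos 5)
Your proof is correct and follows essentially the same route as the paper: form the majorant sequence $D_{l,N}=\sup_{k}2^{-|l-k|N}d_{k}$, verify it is finite and positive via \eqref{a2}, feed \eqref{eq:a2a} through the triangle inequality on indices, absorb $d_j^{1-r}$ into $D_{l,N}^{1-r}$, and divide. The extra detail you give on the finiteness of $D_{l,N}$ (the only place \eqref{a2} is used) and the trivial $r=1$ case are just elaborations of what the paper leaves implicit.
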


For the main result of this section we will work with a Peetre type maximal function. For $\alpha>0$, $f\in \mathcal{S^{\prime}}(\mathbb{R}^{n})$ and $(x,\omega,\sigma)\in \Sp\times (0,\infty)$, set
\begin{align*}
M_{\alpha}^{*}(f)(x,\omega,\sigma):= \sup_{(y,\nu)\in \Sp}\frac{|\theta_{\nu,\sigma}(D)f(y)|}{(1+\sigma^{-1}d(x,\omega;y,\nu)^{2})^{\alpha}},
\end{align*}
where the metric $d$ on $\Sp$ is as in Section \ref{subsec:metric}. We will apply Lemma \ref{lem1} to a sequence arising from this maximal function, and in the following lemma we show that the growth condition \eqref{a2} is satisfied for this sequence.

\begin{lemma}\label{lem2}
Let $\alpha>0$. Then there exists a $C_{\alpha}>0$ with the following property. For all $f\in W^{-\frac{n-1}{4},1}(\mathbb{R}^{n})$, $(x,\w)\in\Sp$, $l\in\N$ and $\sigma\in(1,2)$, one has
\[
M_{\alpha}^{*}(f)(x,\omega,2^{-l}\sigma)\leq C_{\alpha}2^{ln}\|f\|_{W^{-\frac{n-1}{4},1}(\mathbb{R}^{n})}.
\]
\begin{proof}
Fix $f\in W^{-\frac{n-1}{4},1}(\mathbb{R}^{n})$, $(x,\w)\in\Sp$, $l\in\N$ and $\sigma\in(1,2)$. For $\tau\in(0,1)$ and $\xi\in\Rn$, set $\tilde{\theta}_{\omega,\tau}(\xi):=\tau^{\frac{n-1}{4}}\langle \xi\rangle ^{\frac{n-1}{4}}\theta_{\omega,\tau}(\xi)$. It is straightforward to see that $\tilde{\theta}_{\w,\tau}\in C^{\infty}_{c}(\Rn)$, with the same support properties and upper bounds as $\theta_{\w,\tau}$ from Lemma \ref{WP}, with constants independent of $\tau$. In particular, using \eqref{daoshu}, we obtain
\begin{align*}
M_{\alpha}^{*}(f)(x,\omega,2^{-l}\sigma)
&=\sup_{(y,\nu)\in \Sp}\frac{|\theta_{\nu,2^{-l}\sigma}(D)f(y)|}{(1+2^{l}\sigma^{-1}d(x,\omega;y,\nu)^{2})^{\alpha}}\leq \sup_{(y,\nu)\in \Sp}|\theta_{\nu,2^{-l}\sigma}(D)f(y)|\\
&\leq 2^{\frac{n-1}{4}l} \sup_{(y,\nu)\in \Sp}|\tilde{\theta}_{\nu,2^{-l}\sigma}(D)\langle D\rangle ^{-\frac{n-1}{4}}f(y)|\\
&\leq 2^{\frac{n-1}{4}l}\sup_{(y,\nu)\in \Sp}\int_{\Rn}|\F^{-1}(\tilde{\theta}_{\nu,2^{-l}\sigma})(y-z)\lb D\rb^{-\frac{n-1}{4}}f(z)|\,\ud z\\
&\lesssim 2^{nl}\int_{\mathbb{R}^{n}}|\langle D\rangle^{-\frac{n-1}{4}}f(z)|\,\ud z=2^{nl}\|f\|_{W^{-\frac{n-1}{4},1}(\mathbb{R}^{n})}.\qedhere
\end{align*}
\end{proof}
\end{lemma}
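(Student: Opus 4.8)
The plan is to discard the weight in the Peetre maximal function and reduce to an $L^{\infty}$ estimate on a single wave packet applied to $f$, paying for the negative Sobolev regularity of $f$ with a power of the scale. Since $\alpha>0$, the denominator $(1+2^{l}\sigma^{-1}d(x,\omega;y,\nu)^{2})^{\alpha}$ is at least $1$, so
\[
M_{\alpha}^{*}(f)(x,\omega,2^{-l}\sigma)\leq\sup_{(y,\nu)\in\Sp}|\theta_{\nu,2^{-l}\sigma}(D)f(y)|,
\]
and it suffices to bound the right-hand side by a constant times $2^{ln}\|f\|_{W^{-\frac{n-1}{4},1}(\Rn)}$, uniformly in $(x,\omega)$, $l\in\N$ and $\sigma\in(1,2)$; in fact the resulting bound will not depend on $\alpha$ at all.

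Next I would move the regularity onto $f$. Put $\tau:=2^{-l}\sigma\in(0,1)$ and define the renormalized symbol $\tilde{\theta}_{\nu,\tau}(\xi):=\tau^{\frac{n-1}{4}}\langle\xi\rangle^{\frac{n-1}{4}}\theta_{\nu,\tau}(\xi)$, so that $\theta_{\nu,\tau}(D)f=\tau^{-\frac{n-1}{4}}\tilde{\theta}_{\nu,\tau}(D)\langle D\rangle^{-\frac{n-1}{4}}f$. On $\supp(\theta_{\nu,\tau})$ one has $|\xi|\eqsim\tau^{-1}$ by \eqref{eq:dyadicpar}, hence $\tau^{\frac{n-1}{4}}\langle\xi\rangle^{\frac{n-1}{4}}\eqsim1$ there, and combining this with the elementary bounds $|\partial^{\gamma}_{\xi}\langle\xi\rangle^{\frac{n-1}{4}}|\lesssim\langle\xi\rangle^{\frac{n-1}{4}-|\gamma|}$ via the Leibniz rule shows that $\tilde{\theta}_{\nu,\tau}$ has the same support properties and satisfies the same estimates \eqref{daoshu} as $\theta_{\nu,\tau}$, with constants independent of $\tau$. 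Consequently \eqref{fourier} of Lemma \ref{WP} applies to $\tilde{\theta}_{\nu,\tau}$ as well, and taking $N=0$ gives $\|\F^{-1}(\tilde{\theta}_{\nu,\tau})\|_{L^{\infty}(\Rn)}\lesssim\tau^{-\frac{3n+1}{4}}$ uniformly in $\nu\in S^{n-1}$ and $\tau\in(0,1)$.

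The last step is a one-line Young-type inequality: for any $(y,\nu)\in\Sp$,
\[
|\theta_{\nu,\tau}(D)f(y)|=\tau^{-\frac{n-1}{4}}\Big|\int_{\Rn}\F^{-1}(\tilde{\theta}_{\nu,\tau})(y-z)\,\langle D\rangle^{-\frac{n-1}{4}}f(z)\,\ud z\Big|\leq\tau^{-\frac{n-1}{4}}\|\F^{-1}(\tilde{\theta}_{\nu,\tau})\|_{L^{\infty}(\Rn)}\,\|\langle D\rangle^{-\frac{n-1}{4}}f\|_{L^{1}(\Rn)},
\]
which by the previous step is $\lesssim\tau^{-\frac{n-1}{4}-\frac{3n+1}{4}}\|f\|_{W^{-\frac{n-1}{4},1}(\Rn)}=\tau^{-n}\|f\|_{W^{-\frac{n-1}{4},1}(\Rn)}$. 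Since $\sigma\in(1,2)$ forces $\tau^{-n}=(2^{-l}\sigma)^{-n}\eqsim2^{ln}$, taking the supremum over $(y,\nu)\in\Sp$ yields the claimed bound.

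I do not expect a genuine obstacle here; the argument is elementary once the negative-regularity weight has been absorbed. The only point deserving minor care is the claim in the second step that the renormalized wave packet $\tilde{\theta}_{\nu,\tau}$ still obeys the estimates of Lemma \ref{WP} uniformly in $\tau$ — this is exactly where the exponent $-\tfrac{n-1}{4}$ of the Sobolev space $W^{-\frac{n-1}{4},1}$ is matched to the frequency localization $|\xi|\eqsim\tau^{-1}$ of the $\theta_{\nu,\tau}$, and where the two exponents $\tfrac{n-1}{4}$ and $\tfrac{3n+1}{4}$ conspire to produce precisely $\tau^{-n}$. An alternative would be to fold $\langle D\rangle^{-\frac{n-1}{4}}$ and $\theta_{\nu,\tau}(D)$ into a single Fourier multiplier and estimate its kernel directly, but factoring through $\tilde{\theta}_{\nu,\tau}$ and quoting \eqref{fourier} is cleanest.
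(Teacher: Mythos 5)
Your proposal is correct and follows essentially the same route as the paper: discard the weight since $\alpha>0$, factor $\theta_{\nu,\tau}$ through the renormalized symbol $\tilde{\theta}_{\nu,\tau}$ so that $\langle D\rangle^{-\frac{n-1}{4}}$ lands on $f$, and then combine the $N=0$ case of the kernel bound \eqref{fourier} with Young's inequality to produce $\tau^{-n}\eqsim 2^{ln}$. The only differences are cosmetic: you track the scale through $\tau=2^{-l}\sigma$ rather than separating the powers of $2^{l}$, and you supply a slightly more explicit justification (Leibniz plus $\tau\langle\xi\rangle\eqsim 1$ on the support) for the assertion that $\tilde{\theta}_{\nu,\tau}$ inherits the estimates of Lemma~\ref{WP}, which the paper simply states.
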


Having verified the conditions of Lemma \ref{lem1}, we can now apply this lemma to obtain a useful inequality for our maximal function.

\begin{proposition}\label{lem4}
Let $\alpha>0$ and $r\in(0,1)$. Then for each $N>0$ there exists a $C_{\alpha,r,N}>0$ such that, for all $\sigma\in(1,2)$, $l\in\N$ and $f\in W^{-\frac{n-1}{4},1}(\mathbb{R}^{n})$ with $\F f(\xi)=0$ for $|\xi|\leq 2$, one has
\[
[M_{\alpha}^{*}(f)(x,\omega,2^{-l}\sigma)]^{r}\leq C_{\alpha,r,N}\sum_{j=1}^{\infty}2^{-|j-l|N}\int_{\Sp}2^{ln}(1+2^{l}d(x,\omega;z,\mu)^{2})^{-\alpha r}|\theta_{\mu,2^{-j}\sigma}(D)f(z)|^{r}\ud z\ud\mu.
\]
\end{proposition}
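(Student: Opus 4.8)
The plan is to derive the two hypotheses of Lemma~\ref{lem1} for the sequences $d_l := M_{\alpha}^{*}(f)(x,\omega,2^{-l}\sigma)$ and a suitable choice of $b_j$, and then simply quote the conclusion of that lemma. The growth bound \eqref{a2} with $N_0 = n$ is already provided by Lemma~\ref{lem2}, using that $f \in W^{-\frac{n-1}{4},1}(\mathbb{R}^{n})$; here we also need to observe that $\F f(\xi) = 0$ for $|\xi|\le 2$ guarantees $\theta_{\mu,2^{-j}\sigma}(D)f$ is unaffected by low frequencies, which is what lets us use the $G$-type quantities without the $q(D)f$ correction. So the real work is to establish the self-improving inequality \eqref{eq:a2a}: for each large $N$,
\[
d_l \le C_N \sum_{j=1}^{\infty} 2^{-|j-l|N} b_j d_j^{1-r}, \qquad b_j := \int_{S^{*}(\mathbb{R}^{n})} 2^{jn}\big(1+2^{j}d(x,\omega;z,\mu)^{2}\big)^{-\alpha r}|\theta_{\mu,2^{-j}\sigma}(D)f(z)|^{r}\,\ud z\,\ud\mu.
\]
Wait — one should be careful: the $b_j$ appearing in \eqref{eq:a2a} should be the $j$-th summand without the $d_j^{1-r}$ factor, so I will take $b_j$ as above but with the understanding that the final reindexing (replacing $2^{-|j-l|N}$ by $2^{-|j-l|Nr}$ and $b_j d_j^{1-r}$ by $b_j$) is exactly the output of Lemma~\ref{lem1}; matching the statement of Proposition~\ref{lem4} then requires choosing the $N$ in the proposition to be $r$ times the $N$ used internally, which is harmless.

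To prove \eqref{eq:a2a} I would start from a reproducing formula. Since $\int_0^\infty \Psi_\tau(\xi)^2 \frac{\ud\tau}{\tau} = 1$ and, on the support of $\theta_{\nu,\sigma}$, the functions $\int_{S^{n-1}}\varphi_{\mu}(\xi)^2\ud\mu$ and the localizers $\chi_{\mu,\tau}$ are designed precisely so that $\theta_{\nu,\sigma}(\xi) = \sum_{|j-l| \le C}\int_{S^{n-1}} \theta_{\nu,2^{-l}\sigma}(\xi)\,\overline{\chi_{\mu,2^{-j}\sigma}(\xi)}\,\theta_{\mu,2^{-j}\sigma}(\xi)\,\ud\mu \cdot(\text{normalization})$, one obtains a Calderón-type identity expressing $\theta_{\nu,2^{-l}\sigma}(D)f$ as a sum over $j$ (with the spatial scale $2^{-l}\sigma \eqsim 2^{-j}\sigma$ because of the support constraint $|\xi|\eqsim 2^l\sigma^{-1}$) of integrals over $\mu \in S^{n-1}$ of $\theta_{\nu,2^{-l}\sigma}(D)\chi_{\mu,2^{-j}\sigma}(D)\,\theta_{\mu,2^{-j}\sigma}(D)f$. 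Applying Corollary~\ref{off} to the composition $\theta_{\nu,2^{-l}\sigma}(D)\chi_{\mu,2^{-j}\sigma}(D)$ gives a kernel bounded by $C_M \min(2^{-|j-l|})^{M}\rho^{-n}(1+\rho^{-1}d(y,\nu;z,\mu)^2)^{-M}$ with $\rho = 2^{-\min(j,l)}\sigma \eqsim 2^{-l}$. Convolving this against the values $|\theta_{\mu,2^{-j}\sigma}(D)f(z)|$, dividing both sides by $(1+\sigma^{-1}2^{l}d(x,\omega;y,\nu)^2)^{\alpha}$ and using the triangle/doubling properties of $d$ to transfer the center from $(y,\nu)$ to $(x,\omega)$ (so $(1+2^l d(x,\omega;z,\mu)^2)^{-\alpha}$ appears), one sees the right-hand side is controlled by a factor of $M_\alpha^*(f)(x,\omega,2^{-j}\sigma)$ times a convolution-type integral over $(z,\mu)$ against the Peetre-type weight. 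Finally, writing $|\theta_{\mu,2^{-j}\sigma}(D)f(z)| = |\theta_{\mu,2^{-j}\sigma}(D)f(z)|^{r}\cdot|\theta_{\mu,2^{-j}\sigma}(D)f(z)|^{1-r}$ and estimating the $(1-r)$-power by $M_\alpha^*(f)(x,\omega,2^{-j}\sigma)^{1-r}$ times the appropriate power of $(1+2^j d(x,\omega;z,\mu)^2)^{\alpha}$ — absorbing that growth into the decaying kernel, which is possible precisely because we have $M$ derivatives to spare from Corollary~\ref{off} — produces $d_j^{1-r}$ times exactly the integral defining $b_j$, giving \eqref{eq:a2a}.

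The main obstacle, I expect, is the bookkeeping in this last "split and absorb" step: one must choose $N_0 < N < M$ and the exponent $M$ in Corollary~\ref{off} large enough (depending on $\alpha$, $r$, $n$) so that after pulling out $M_\alpha^*(f)(x,\omega,2^{-j}\sigma)^{1-r}$ — which costs a positive power of $(1+2^j d(x,\omega;z,\mu)^2)$ and a geometric factor $2^{|j-l|\cdot\text{const}}$ — the residual kernel is still summable in $j$ with decay $2^{-|j-l|N}$ and still integrable in $(z,\mu)$ against $2^{jn}(1+2^j d(\cdots)^2)^{-\alpha r}$, using Lemma~\ref{lem7} for the volume growth of balls. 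A secondary subtlety is justifying the Calderón reproducing identity rigorously at the level of tempered distributions with the stated frequency restriction on $f$, and checking that the $\mu$-integral over $S^{n-1}$ combined with the directional localization $|\hat\xi - \mu| \lesssim \sqrt{2^{-j}\sigma}$ contributes only a bounded factor (it contributes the surface measure $\eqsim (2^{-j})^{(n-1)/2}$, which is exactly compensated by the normalization $c_\sigma \eqsim \sigma^{-(n-1)/4}$ built into the wave packets). Once \eqref{a2} and \eqref{eq:a2a} are in hand, Lemma~\ref{lem1} applied with this $r$ delivers the claimed inequality verbatim after the harmless rescaling of $N$.
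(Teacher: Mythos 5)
Your proposal follows the paper's proof essentially verbatim: both start from the Calder\'on-type reproducing identity $\sum_{j}\eta_{2^{-j}\sigma}\Psi_{2^{-j}\sigma}=1$, rewrite $\eta_{2^{-j}\sigma}\Psi_{2^{-j}\sigma}$ as $\int_{S^{n-1}}\chi_{\mu,2^{-j}\sigma}\theta_{\mu,2^{-j}\sigma}\,\ud\mu$, bound the composed kernels $\theta_{\nu,2^{-l}\sigma}(D)\chi_{\mu,2^{-j}\sigma}(D)$ via Corollary~\ref{off}, split $|\theta_{\mu,2^{-j}\sigma}(D)f|=|\theta_{\mu,2^{-j}\sigma}(D)f|^{r}|\theta_{\mu,2^{-j}\sigma}(D)f|^{1-r}$ to pull out a factor of $M_{\alpha}^{*}(f)(x,\omega,2^{-j}\sigma)^{1-r}$, absorb the resulting $(1+2^{j}d^{2})^{\alpha(1-r)}$ into the decaying kernel via $(1+2^{j}d^{2})\leq(1+2^{l}d^{2})2^{|l-j|}$, and then invoke Lemmas~\ref{lem1} and~\ref{lem2}. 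Your two secondary worries are non-issues in exactly the way you anticipate: the Calder\'on identity only involves finitely many $j$ once convolved against the compactly frequency-supported $\theta_{\nu,2^{-l}\sigma}$, and the $\mu$-integral over $S^{n-1}$ is already absorbed into the kernel normalization in Corollary~\ref{off}.
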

Note that the Fourier transform $\F f$ of an $f\in W^{-\frac{n-1}{4},1}(\Rn)$ is a function of at most polynomial growth, so the pointwise condition $\F f(\xi)=0$ for $|\xi|\leq 2$ is well defined. We also note that the assumption $f\in W^{-\frac{n-1}{4},1}(\Rn)$ can be extended to $f\in W^{s,1}(\Rn)$ for some $s\in\R$, but we will not need such generality in the remainder.
\begin{proof}
Clearly we may consider $N\geq \alpha$. Fix $(x,\w)\in\Sp$, $\sigma\in(1,2)$, $l\in\N$ and $f\in W^{-\frac{n-1}{4},1}(\Rn)$ with $\F f(\xi)=0$ for $|\xi|\leq 2$. Recall from the proof of Lemma \ref{WP} that $\eta$, as defined in \eqref{eq:eta}, satisfies $\eta\in C^{\infty}_{c}(\Rn)$ and $\supp(\eta)=\supp(\Psi)\subseteq \{\xi\in\mathbb{R}^{n}:\frac{1}{2}\leq|\xi|\leq2\}$. As we did for $\Psi$, write $\eta_{\tau}(\xi):=\eta(\tau\xi)$ for $\tau>0$ and $\xi\in\Rn$. Then, by definition, the following identity holds for $\xi\neq0$:
\[
\sum_{j\in \mathbb{Z}}\eta_{2^{-j}\sigma}(\xi)\Psi_{2^{-j}\sigma}(\xi)=1.
\]
Now, by the assumption on the support of $\F f$ and because $\sigma\in(1,2)$, one has $\Psi_{2^{-j}\sigma}(D)f=0$ for $j\leq 0$. Hence, using the definition of $\chi_{\mu,2^{-j}\sigma}$ from \eqref{eq:chi}, a direct calculation yields
\begin{align*}
\theta_{\nu,2^{-l}\sigma}(D)f(y)&=\sum_{j\in\Z}\theta_{\nu,2^{-l}\sigma}(D)\eta_{2^{-j}\sigma}(D)\Psi_{2^{-j}\sigma}(D)f(y)=\sum_{j=1}^{\infty}\theta_{\nu,2^{-l}\sigma}(D)\eta_{2^{-j}\sigma}(D)\Psi_{2^{-j}\sigma}(D)f(y)\\
&=\sum_{j=1}^{\infty}\int_{S^{n-1}}\theta_{\nu,2^{-l}\sigma}(D)\chi_{\mu,2^{-j}\sigma}(D)\theta_{\mu,2^{-j}\sigma}(D)f(y)\,\ud\mu
\end{align*}
for all $(y,\nu)\in\Sp$. Now apply Corollary \ref{off} to
\[
K_{2^{-l}\sigma,2^{-j}\sigma}^{\nu,\mu}(y,z)=\F^{-1}(\theta_{\nu,2^{-l}\sigma}\chi_{\mu,2^{-j}\sigma})(y-z)=\frac{1}{(2\pi)^{n}}\int_{\mathbb{R}^{n}}e^{i\langle y-z,\xi\rangle}\theta_{\nu,2^{-l}\sigma}(\xi)\chi_{\mu,2^{-j}\sigma}(\xi)\,\ud\xi,
\]
for $(z,\mu) \in \Sp$ and $j\in\N$, to obtain
\begin{align*}
|\theta_{\nu,2^{-l}\sigma}(D)f(y)|
&\leq \sum_{j=1}^{\infty}\Big|\int_{S^{n-1}}\theta_{\nu,2^{-l}\sigma}(D)\chi_{\mu,2^{-j}\sigma}(D)\theta_{\mu,2^{-j}\sigma}(D)f(y)\,\ud\mu\Big|\\
&\leq \sum_{j=1}^{\infty}\int_{S^{n-1}}\int_{\mathbb{R}^{n}}|K_{2^{-l}\sigma,2^{-j}\sigma}^{\nu,\mu}(y,z)\theta_{\mu,2^{-j}\sigma}(D)f(z)|\,\ud z\ud\mu\\
&\lesssim\sum_{j=1}^{\infty}2^{-|j-l|N}\int_{\Sp}2^{ln}(1+2^{l}d(y,\nu;z,\mu)^{2})^{-N}|\theta_{\mu,2^{-j}\sigma}(D)f(z)|\,\ud z\ud\mu.
\end{align*}
In turn, we can use that $d$ is a metric and that $N\geq\alpha$ to derive from this that
\begin{align*}
&M_{\alpha}^{*}(f)(x,\omega,2^{-l}\sigma)\lesssim\sup_{(y,\nu)\in \Sp}\frac{|\theta_{\nu,2^{-l}\sigma}(D)f(y)|}{(1+2^{l}d(x,\omega;y,\nu)^{2})^{\alpha}}\\
&\lesssim \sup_{(y,\nu)\in \Sp}\sum_{j=1}^{\infty}2^{-|j-l|N}\int_{\Sp}2^{ln}(1+2^{l}d(y,\nu;z,\mu)^{2})^{-N}(1+2^{l}d(x,\omega;y,\nu)^{2})^{-\alpha}|\theta_{\mu,2^{-j}\sigma}(D)f(z)|\,\ud z\ud\mu\\
&\leq \sum_{j=1}^{\infty}2^{-|j-l|N}\int_{\Sp}2^{ln}(1+2^{l}d(x,\omega;z,\mu)^{2})^{-\alpha}|\theta_{\mu,2^{-j}\sigma}(D)f(z)|\,\ud z\ud\mu\\
&= \sum_{j=1}^{\infty}2^{-|j-l|N}\int_{\Sp}2^{ln}(1+2^{l}d(x,\omega;z,\mu)^{2})^{-\alpha}|\theta_{\mu,2^{-j}\sigma}(D)f(z)|^{r}|\theta_{\mu,2^{-j}\sigma}(D)f(z)|^{1-r}\ud z\ud\mu\\
&\leq \sum_{j=1}^{\infty}2^{-|j-l|N}\int_{\Sp}2^{ln}\frac{(1+2^{j}d(x,\omega;z,\mu)^{2})^{\alpha(1-r)}}{(1+2^{l}d(x,\omega;z,\mu)^{2})^{\alpha}}|\theta_{\mu,2^{-j}\sigma}(D)f(z)|^{r}\ud z\ud\mu (M_{\alpha}^{*}(f)(x,\omega,2^{-j}\sigma))^{1-r}.
\end{align*}
Moreover, since one has
\[
1+2^{j}d(x,\omega;z,\mu)^{2}\leq (1+2^{l}d(x,\omega;z,\mu)^{2})2^{|l-j|}
\]
for all $j\in\N$ and $(z,\mu)\in\Sp$, we can write
\begin{align*}
&M_{\alpha}^{*}(f)(x,\omega,2^{-l}\sigma)\\
&\lesssim \sum_{j=1}^{\infty}2^{-|j-l|N}\int_{\Sp}2^{ln}\frac{(1+2^{j}d(x,\omega;z,\mu)^{2})^{\alpha(1-r)}}{(1+2^{l}d(x,\omega;z,\mu)^{2})^{\alpha}}|\theta_{\mu,2^{-j}\sigma}(D)f(z)|^{r}\ud z\ud\mu (M_{\alpha}^{*}(f)(x,\omega,2^{-j}\sigma))^{1-r}\\
&\leq \sum_{j=1}^{\infty}2^{-|j-l|(N-\alpha)}\int_{\Sp}2^{ln}(1+2^{l}d(x,\omega;z,\mu)^{2})^{-\alpha r}|\theta_{\mu,2^{-j}\sigma}(D)f(z)|^{r}\ud z\ud\mu \big(M_{\alpha}^{*}(f)(x,\omega,2^{-j}\sigma)\big)^{1-r}.
\end{align*}
Finally, we can apply Lemmas \ref{lem1} and \ref{lem2} to this estimate to obtain
\[
(M_{\alpha}^{*}(f)(x,\omega,2^{-l}\sigma))^{r}\lesssim\sum_{j=1}^{\infty}2^{-|j-l|(N-\alpha)r}\int_{\Sp}2^{ln}(1+2^{l}d(x,\omega;z,\mu)^{2})^{-\alpha r}|\theta_{\mu,2^{-j}\sigma}(D)f(z)|^{r}\ud z\ud\mu.\qedhere
\]
\end{proof}

To conclude this subsection we collect the following result from \cite{Rychkov99}.

\begin{lemma}\label{lem3}
Let $(X,\tilde{d},\mu)$ be a metric measure space, where $\tilde{d}$ is a metric and $\mu$ is a nonnegative, doubling, Borel measure. Let $p,q\in[1,\infty)$ and $N>0$, and let $\{g_{j}\}_{j\in\Z}$  be a sequence of nonnegative measurable functions on $X$. For each $l\in\Z$ set
\begin{align*}
h_{l}:=\sum_{j=-\infty}^{\infty}2^{-|j-l|N}g_{j}.
 \end{align*}
Then there exists a $C=C(p,q,N)>0$ such that
\begin{align*}
||\{h_{l}\}_{l\in\Z}||_{L^{p}(X; \ell^{q})}\leq C ||\{g_{j}\}_{j\in\Z}||_{L^{p}(X; \ell^{q})}.
 \end{align*}
\end{lemma}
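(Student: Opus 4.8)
The plan is to observe that, for each fixed point of $X$, the map $\{g_j\}_j\mapsto\{h_l\}_l$ is nothing but discrete convolution with the fixed $\ell^1(\Z)$-sequence $a=\{2^{-|k|N}\}_{k\in\Z}$, and then to integrate. Neither the metric nor the doubling property of $(X,\tilde d,\mu)$ is actually needed; the only inputs are the measurability of the $g_j$ and the summability $\sum_{k\in\Z}2^{-|k|N}=\frac{1+2^{-N}}{1-2^{-N}}=:C_N<\infty$.

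First I would fix $x\in X$ and, substituting $k=l-j$, rewrite $h_l(x)=\sum_{k\in\Z}2^{-|k|N}\,g_{l-k}(x)$, so that $\{h_l(x)\}_l=a\ast\{g_j(x)\}_j$ as sequences indexed by $\Z$. Since every term is nonnegative, there are no convergence or cancellation subtleties. Applying the triangle inequality in $\ell^q$ together with the translation invariance of the $\ell^q$-norm gives
\[
\|\{h_l(x)\}_l\|_{\ell^q}\leq\sum_{k\in\Z}2^{-|k|N}\,\|\{g_{l-k}(x)\}_l\|_{\ell^q}=C_N\,\|\{g_j(x)\}_j\|_{\ell^q},
\]
which is exactly Young's inequality $\|a\ast b\|_{\ell^q}\leq\|a\|_{\ell^1}\|b\|_{\ell^q}$.

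Next I would raise this pointwise bound to the power $p$ and integrate over $X$ against $\mu$ — noting that $x\mapsto\|\{g_j(x)\}_j\|_{\ell^q}$ and $x\mapsto\|\{h_l(x)\}_l\|_{\ell^q}$ are measurable, being built from countable sums of measurable functions — to obtain $\int_X\|\{h_l(x)\}_l\|_{\ell^q}^p\,\ud\mu(x)\leq C_N^p\int_X\|\{g_j(x)\}_j\|_{\ell^q}^p\,\ud\mu(x)$. Taking $p$-th roots yields the claim with $C=C_N$, which in fact depends only on $N$ (consistent with, and stronger than, the asserted dependence on $p,q,N$). If the right-hand side is infinite the inequality is trivial; otherwise $\|\{g_j(x)\}_j\|_{\ell^q}<\infty$ for $\mu$-a.e.\ $x$, so all quantities above are finite a.e.\ and the manipulations are justified.

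I do not anticipate any genuine obstacle: the argument is elementary, and the only points requiring a line of care are the convolution substitution and the routine measurability of the sequence-valued functions. This is essentially the proof in \cite{Rychkov99}.
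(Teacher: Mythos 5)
Your proof is correct and complete. The paper does not in fact give its own proof of Lemma~\ref{lem3}: it simply states the result and cites \cite{Rychkov99}, so there is no in-paper argument to compare against. Your argument---rewriting $h_l(x)=\sum_k 2^{-|k|N}g_{l-k}(x)$ as the discrete convolution of $\{g_j(x)\}_j$ with the fixed $\ell^1(\Z)$ kernel $a=\{2^{-|k|N}\}_k$, applying Young's inequality $\|a\ast b\|_{\ell^q}\leq\|a\|_{\ell^1}\|b\|_{\ell^q}$ pointwise in $x$, and then integrating the $p$th power over $X$---is the standard proof and is airtight. Your observation that neither the metric nor the doubling hypothesis on $(X,\tilde d,\mu)$ plays any role, and that the constant may be taken to be $C_N=\sum_{k\in\Z}2^{-|k|N}=\frac{1+2^{-N}}{1-2^{-N}}$, depending only on $N$, is also accurate and is a (mild) sharpening of the stated dependence $C=C(p,q,N)$. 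Minor points worth a sentence if you write this up: the interchange of sum and $\ell^q$-norm (i.e.\ the triangle inequality in $\ell^q$ applied to the infinite sum) is justified because all terms are nonnegative, and measurability of $x\mapsto\|\{g_j(x)\}_j\|_{\ell^q}$ follows since it is a monotone limit of finite sums of measurable functions; you touch on both, which is enough.
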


\subsection{The main embedding}

After this preliminary work, we are ready to prove the main result of this section.

\begin{theorem}\label{pro1}
One has
\[
\mathcal{H}_{FIO}^{1}(\mathbb{R}^{n})=\mathcal{H}_{FIO,G}^{1}(\mathbb{R}^{n})
\]
with equivalent norms.
\end{theorem}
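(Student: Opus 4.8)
The plan is to prove the missing inclusion $\HT^{1}_{FIO,G}(\Rn)\subseteq\HT^{1}_{FIO}(\Rn)$, since the reverse inclusion is already known from \cite{AuHoMa12}. By Definition \ref{Area function}, I must show that every $f\in\HT^{1}_{FIO,G}(\Rn)$ satisfies $q(D)f\in L^{1}(\Rn)$ (which is immediate) and $S(f)\in L^{1}(\Sp)$ with $\|S(f)\|_{L^{1}(\Sp)}\lesssim\|f\|_{\HT^{1}_{FIO,G}(\Rn)}$. Since $\theta_{\nu,\sigma}(D)f=\theta_{\nu,\sigma}(D)(1-q)(D)f$ for $\sigma<1$, the low-frequency part contributes only an $L^1$ term controlled as in the proof of Proposition \ref{prop:equivalent}; so I may assume $\F f$ vanishes for $|\xi|\le 2$ and by Proposition \ref{prop:Sobolev} that $f\in W^{-\frac{n-1}{4},1}(\Rn)$, which is exactly the hypothesis needed to invoke the Peetre-maximal-function machinery of Proposition \ref{lem4}.

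The core of the argument is a standard but delicate passage from the vertical $g$-function $G(f)$ to the conical square function $S(f)$ via the Peetre maximal function $M^{*}_{\alpha}$. First I would discretize the $\sigma$-integral dyadically, writing $\sigma=2^{-l}\sigma'$ with $\sigma'\in(1,2)$, $l\in\N$, and bound, for $(y,\nu)\in B_{\sqrt{2^{-l}\sigma'}}(x,\w)$, the averaged quantity $\fint_{B_{\sqrt\sigma}(x,\w)}|\theta_{\nu,\sigma}(D)f(y)|^2\,\ud y\,\ud\nu$ pointwise by $\big(M^{*}_{\alpha}(f)(x,\w,2^{-l}\sigma')\big)^{2}$ for any $\alpha>0$ (using that on such a ball $1+\sigma^{-1}d(x,\w;y,\nu)^2\lesssim 1$). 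This gives $S(f)(x,\w)\lesssim\big(\sum_{l\ge1}[M^{*}_{\alpha}(f)(x,\w,2^{-l}\sigma')]^{2}\big)^{1/2}$, roughly uniformly in $\sigma'\in(1,2)$. Next, I would fix a small exponent $r\in(0,1)$ and apply Proposition \ref{lem4} to write $[M^{*}_{\alpha}(f)(x,\w,2^{-l}\sigma')]^{r}\lesssim\sum_{j\ge1}2^{-|j-l|Nr}\int_{\Sp}2^{ln}(1+2^{l}d(x,\w;z,\mu)^2)^{-\alpha r}|\theta_{\mu,2^{-j}\sigma'}(D)f(z)|^{r}\,\ud z\,\ud\mu$. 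The inner integral is (up to the doubling bounds of Lemma \ref{lem7}) a $(1+\cdot)^{-\alpha r}$-weighted average of $|\theta_{\mu,2^{-j}\sigma'}(D)f|^{r}$ over balls centred at $(x,\w)$; choosing $r$ so that $\alpha r>1$ identifies this with (a constant times) the Hardy--Littlewood maximal function on the doubling space $(\Sp,d,\ud x\ud\w)$ applied to the function $(z,\mu)\mapsto|\theta_{\mu,2^{-j}\sigma'}(D)f(z)|^{r}$. Thus $[M^{*}_{\alpha}(f)(x,\w,2^{-l}\sigma')]^{r}\lesssim\sum_{j}2^{-|j-l|Nr}\,\mathcal{M}\big(|\theta_{\cdot,2^{-j}\sigma'}(D)f|^{r}\big)(x,\w)$.

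Now I assemble: raise both sides to the power $1/r$, re-form the $\ell^{2}$-norm over $l$, and use the $\ell^{r}\hookrightarrow\ell^{1}$-type triangle inequality together with Lemma \ref{lem3} (the convolution-in-scale estimate) — applied with $p=1/r>1$ isn't quite right, so more carefully I would work in $L^{1/r}(\Sp;\ell^{2/r})$, i.e. take the $L^{1/r}$-quasinorm of the $\ell^{2/r}$-sequence in $l$, move the $2^{-|j-l|Nr}$ convolution out by Lemma \ref{lem3}, and then apply the vector-valued Fefferman--Stein maximal inequality for $\mathcal M$ on $L^{1/r}(\Sp;\ell^{2/r})$ (valid since $1/r>1$ and $\Sp$ is doubling). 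This yields
\[
\|S(f)\|_{L^{1}(\Sp)}^{r}=\big\||S(f)|^{r}\big\|_{L^{1/r}(\Sp)}\lesssim\Big\|\big\{|\theta_{\mu,2^{-j}\sigma'}(D)f|^{r}\big\}_{j}\Big\|_{L^{1/r}(\Sp;\ell^{2/r})}=\Big(\int_{\Sp}\Big(\sum_{j}|\theta_{\mu,2^{-j}\sigma'}(D)f(z)|^{2}\Big)^{1/r}\ud z\ud\mu\Big)^{r},
\]
and finally I would average over $\sigma'\in(1,2)$ against $\ud\sigma'/\sigma'$, converting $\sum_{j}|\theta_{\mu,2^{-j}\sigma'}(D)f|^2$ back into $\int_{0}^{1}|\theta_{\mu,\sigma}(D)f|^2\,\ud\sigma/\sigma=G(f)(z,\mu)^2$, so that $\|S(f)\|_{L^{1}(\Sp)}\lesssim\|G(f)\|_{L^{1}(\Sp)}\le\|f\|_{\HT^{1}_{FIO,G}(\Rn)}$. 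Combined with the low-frequency estimate this gives $f\in\HT^{1}_{FIO}(\Rn)$ with the required norm bound, completing the proof.

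The main obstacle, I expect, is the bookkeeping in the last step: one has to commit to the sublinear exponent $r$ early (small enough that $\alpha r>1$ so the weighted averages really are dominated by $\mathcal M$, but with $1/r>1$ so the vector-valued maximal inequality and Lemma \ref{lem3} apply), and then carry the $\ell^{2/r}$-over-$j$ together with the $\ell^{2/r}$-over-$l$ structure through Proposition \ref{lem4}, Lemma \ref{lem3}, and Fefferman--Stein without the implicit constants secretly depending on $f$. A secondary technical point is justifying uniformity in the auxiliary parameter $\sigma'\in(1,2)$ and the harmless interchange of the $\sigma'$-average with the various $\ell^{q}$ and $L^{p}$ norms; these are routine given Lemmas \ref{lem7}, \ref{lem2}, \ref{lem3} and Proposition \ref{lem4}, but need to be spelled out with some care.
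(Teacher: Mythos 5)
Your overall strategy coincides with the paper's: split off the low frequencies, reduce the conical square function $S$ to the Peetre maximal function $M^{*}_{\alpha}$ via a pointwise bound, feed in Proposition~\ref{lem4}, majorize the resulting weighted integrals by the Hardy--Littlewood maximal operator $\mathcal M$ on $(\Sp,d,\ud x\ud\w)$, and close with Lemma~\ref{lem3} and the vector-valued maximal inequality in $L^{1/r}(\Sp;\ell^{2/r})$, finally reassembling $G(f)$. This is essentially the proof in the paper. However, as written there is one concrete error that makes the crucial majorization fail.

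You assert that the weighted average
$\int_{\Sp}2^{ln}\bigl(1+2^{l}d(x,\omega;z,\mu)^{2}\bigr)^{-\alpha r}|\theta_{\mu,2^{-j}\sigma'}(D)f(z)|^{r}\,\ud z\ud\mu$
is controlled by $\mathcal M\bigl(|\theta_{\cdot,2^{-j}\sigma'}(D)f|^{r}\bigr)(x,\omega)$ provided $\alpha r>1$. That threshold is wrong. By Lemma~\ref{lem7}, a metric ball of radius $\tau<1$ in $\Sp$ has volume $\eqsim\tau^{2n}$. If you split the domain into annuli
$C_{k}=\{(z,\mu)\colon 2^{(k-1)/2}2^{-l/2}<d(x,\omega;z,\mu)\le 2^{k/2}2^{-l/2}\}$
then $|C_{k}|\lesssim 2^{kn}2^{-ln}$, while the weight on $C_{k}$ is $\lesssim 2^{ln}2^{-k\alpha r}$, so the annular contributions sum to
$\sum_{k\ge0}2^{-k(\alpha r-n)}\,\mathcal M(F)(x,\omega)$,
which converges only when $\alpha r>n$. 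For $n\ge2$ the condition $\alpha r>1$ is strictly weaker and the series diverges. Since one must also have $r<1$ (needed both for the sublinear trick in Proposition~\ref{lem4} and so that $L^{1/r}(\Sp;\ell^{2/r})$ is a genuine Banach-space setting for Fefferman--Stein), you are forced to commit to $\alpha>n$ at the outset and then choose $r\in(n/\alpha,1)$ — which is exactly what the paper does. You left $\alpha$ unspecified (``for any $\alpha>0$''), so as written the argument breaks at this step. Once $\alpha>n$ and $r\in(n/\alpha,1)$ are fixed, everything else you wrote is correct.

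A secondary, more minor slip: the identity $\theta_{\nu,\sigma}(D)f=\theta_{\nu,\sigma}(D)(1-q)(D)f$ does not hold for all $\sigma<1$; it fails for $\sigma$ close to $1$, since $\theta_{\nu,\sigma}$ is then supported partly in $\{|\xi|\le2\}$ where $q\equiv1$. The correct reduction is to bound $S(f)\le S(q(D)f)+S\bigl((1-q)(D)f\bigr)$ and then estimate the first term directly (using that $\theta_{\nu,\sigma}q=0$ for $\sigma$ small, or, as the paper does, the boundedness of $\lb D\rb^{-\frac{n-1}{4}}\colon\HT^{1}(\Rn)\to\HT^{1}_{FIO}(\Rn)$), before applying the Peetre-maximal machinery to the genuinely high-frequency piece $h=(1-q)(D)f$, whose Fourier transform does vanish on $\{|\xi|\le2\}$.
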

\begin{proof}
As already noted, it follows from \cite[Proposition 2.1 and Remark 2.2]{AuHoMa12} that $\HT^{1}_{FIO}(\Rn)\subseteq \HT^{1}_{FIO,G}(\Rn)$ continuously. More precisely, it is shown in \cite[Proposition 2.1 and Remark 2.2]{AuHoMa12} that the following inequality holds for the square functions $S$ and $G$ from \eqref{eq:conical} and \eqref{eq:vertical}, respectively:
\begin{equation}\label{eq:squarefunctions}
\|G(g)\|_{L^{1}(\Sp)}\lesssim \|S(g)\|_{L^{1}(\Sp)}\quad(g\in\Sw'(\Rn)).
\end{equation}
So it remains to prove that $\mathcal{H}_{FIO,G}^{1}(\mathbb{R}^{n})\subseteq \mathcal{H}_{FIO}^{1}(\mathbb{R}^{n})$.

Fix $f\in \mathcal{H}_{FIO,G}^{1}(\mathbb{R}^{n})$. First note that one trivially has $q(D)f\in L^{1}(\Rn)$ and
\[
\|q(D)f\|_{L^{1}(\Rn)}\leq \|G(f)\|_{L^{1}(\Sp)}+\|q(D)f\|_{L^{1}(\Rn)}=\|f\|_{\HT^{1}_{FIO,G}(\Rn)}.
\]
So it suffices to show that $S(f)\in L^{1}(\Sp)$ and
\[
\|S(f)\|_{L^{1}(\Sp)}\lesssim \|f\|_{\HT^{1}_{FIO,G}(\Rn)}.
\]
To this end, we decompose $f$ into its low-frequency and high-frequency components:
\begin{align*}
\|S(f)\|_{L^{1}(\Sp)}\leq \|S(q(D)f)\|_{L^{1}(\Sp)}+\|S((1-q)(D)f)\|_{L^{1}(\Sp)}.
\end{align*}
For the low-frequency part we use that $\lb D\rb^{-\frac{n-1}{4}}:\HT^{1}(\Rn)\to \HT^{1}_{FIO}(\Rn)$ is continuous (see \cite[Theorem 7.4]{HaPoRo20}), where $\HT^{1}(\Rn)$ is as defined in \eqref{eq:localhardy}. Choosing $r$ in the definition of $\HT^{1}(\Rn)$ such that $r\equiv 1$ on $\supp(q)$, we obtain
\begin{equation}\label{eq:lowfreq}
\begin{aligned}
\|S(q(D)f)\|_{L^{1}(\Sp)}&\leq \|q(D)f\|_{\mathcal{H}_{FIO}^{1}(\mathbb{R}^{n})}\lesssim\|\langle D\rangle^{\frac{n-1}{4}}q(D)f\|_{\mathcal{H}^{1}(\mathbb{R}^{n})}\\
&\eqsim\|\langle D\rangle^{\frac{n-1}{4}}q(D)f\|_{L^{1}(\mathbb{R}^{n})}\lesssim\|q(D)f\|_{L^{1}(\mathbb{R}^{n})}\leq \|f\|_{\HT^{1}_{FIO,G}(\Rn)},
\end{aligned}
\end{equation}
where in the penultimate inequality we used that $q\in C^{\infty}_{c}(\Rn)$.

Next, we consider the high-frequency component $h:=(1-q)(D)f$. We fix $\alpha>n$ and claim that it suffices to prove the following two inequalities:
\begin{equation}\label{a6}
S(h)(x,\omega)\lesssim\bigg(\int_{0}^{1}[M_{\alpha}^{*}(h)(x,\omega,\sigma)]^{2}\,\frac{\ud\sigma}{\sigma}\bigg)^{1/2}
\end{equation}
for all $(x,\w)\in\Sp$, and
\begin{equation}\label{a7}
\int_{\Sp}\bigg{(}\int_{0}^{1}[M_{\alpha}^{*}(h)(x,\omega,\sigma)]^{2}\,\frac{\ud\sigma}{\sigma}\bigg{)}^{1/2}\ud x\ud\w\lesssim\|G(h)\|_{L^{1}(\Sp)}.
\end{equation}
Indeed, by combining these inequalities with \eqref{eq:lowfreq} and \eqref{eq:squarefunctions}, we obtain
\begin{align*}
\|S(f)\|_{L^{1}(\Sp)}&\leq \|S(q(D)f)\|_{L^{1}(\Sp)}+\|S(h)\|_{L^{1}(\Sp)}\lesssim \|f\|_{\HT^{1}_{FIO,G}(\Rn)}+\|G(h)\|_{L^{1}(\Sp)}\\
&\leq \|f\|_{\HT^{1}_{FIO,G}(\Rn)}+\|G(q(D)f)\|_{L^{1}(\Sp)}+\|G(f)\|_{L^{1}(\Sp)}\\
&\lesssim \|f\|_{\HT^{1}_{FIO,G}(\Rn)}+\|S(q(D)f)\|_{L^{1}(\Sp)}+\|f\|_{\HT^{1}_{FIO,G}(\Rn)}\lesssim \|f\|_{\HT^{1}_{FIO,G}(\Rn)}.
\end{align*}
Hence in the remainder we will focus on proving \eqref{a6} and \eqref{a7}.

\subsubsection*{Estimate \eqref{a6}}

This estimate follows from a straightforward calculation. For all $(x,\w)\in\Sp$, $\sigma>0$ and $(y,\nu)\in B_{\sqrt{\sigma}}(x,\omega)$ one has $1\leq 1+\sigma^{-1} d(x,\omega;y,\nu)^{2}\leq 2$. Hence
\begin{align*}
S(h)(x,\omega)&=\bigg(\int_{0}^{1}\fint_{B_{\sqrt{\sigma}}(x,\omega)}|\theta_{\nu,\sigma}(D)h(y)|^{2}\, \ud y\ud\nu\frac{\ud\sigma}{\sigma}\bigg)^{1/2}\\
&\leq \bigg(\int_{0}^{1}\sup\limits_{(y,\nu)\in B_{\sqrt{\sigma}}(x,\omega)}|\theta_{\nu,\sigma}(D)h(y)|^{2}\,\frac{\ud\sigma}{\sigma}\bigg)^{1/2}\\
&\lesssim\bigg(\int_{0}^{1}\sup\limits_{(y,\nu)\in B_{\sqrt{\sigma}}(x,\omega)}\frac{|\theta_{\nu,\sigma}(D)f(y)|^{2}}{(1+\sigma^{-1}d(x,\omega;y,\nu)^{2})^{2\alpha}}\,\frac{\ud\sigma}{\sigma}\bigg)^{1/2}\\
&\leq \bigg(\int_{0}^{1}\sup\limits_{(y,\nu)\in \Sp}\frac{|\theta_{\nu,\sigma}(D)f(y)|^{2}}{(1+\sigma^{-1}d(x,\omega;y,\nu)^{2})^{2\alpha}}\,\frac{\ud\sigma}{\sigma}\bigg)^{1/2}=\bigg(\int_{0}^{1}[M_{\alpha}^{*}(h)(x,\omega,\sigma)]^{2}\,\frac{\ud\sigma}{\sigma}\bigg)^{1/2}.
\end{align*}

\subsubsection*{Estimate \eqref{a7}}

The idea of the proof is to write
\begin{align*}
\int_{\Sp}\bigg{(}\int_{0}^{1}[M_{\alpha}^{*}(h)(x,\omega,\sigma)]^{2}\,\frac{\ud\sigma}{\sigma}\bigg{)}^{1/2}\ud x\ud\w
&=\int_{\Sp}\bigg{(}\sum_{l=1}^{\infty}\int_{1}^{2}[M_{\alpha}^{*}(h)(x,\omega,2^{-l}\sigma)]^{2}\,\frac{\ud\sigma}{\sigma}\bigg{)}^{1/2}\ud x\ud\w\\
&=\bigg\|\bigg\{\bigg(\int_{1}^{2}[M_{\alpha}^{*}(h)(\cdot,\cdot,2^{-l}\sigma)]^{2}\,\frac{\ud\sigma}{\sigma}\bigg)^{r/2}\bigg\}_{l=1}^{\infty}\bigg\|_{L^{1/r}(\Sp;\ell^{2/r})}^{1/r}
\end{align*}
for a suitably chosen $r\in(0,1)$. We will bound the sequence in the final term by a suitable expression involving the Hardy--Littlewood maximal function, and then we combine boundedness properties of this maximal function with Lemma \ref{lem3} to obtain \eqref{a7}.

For the moment, fix $(x,\w)\in\Sp$ and $l\in\N$. We will use the pointwise estimate in Proposition \ref{lem4} for $M_{\alpha}^{*}(h)$. Note that Proposition \ref{lem4} indeed applies to $h$, given that Proposition \ref{prop:Sobolev} shows that $f\in W^{-\frac{n-1}{4},1}(\Rn)$ and therefore $h=(1-q)(D)f\in W^{-\frac{n-1}{4},1}(\Rn)$ as well. And one has
\[
\F h(\xi)=(1-q(\xi))\F f(\xi)=0
\]
for $|\xi|\leq 2$ because $q(\xi)=1$ for such $\xi$. Now, since $\alpha>n$ we can choose $r\in (n/\alpha, 1)$ and $N>0$ and apply Proposition \ref{lem4} to obtain
\begin{align*}
[M_{\alpha}^{*}(h)(x,\omega,2^{-l}\sigma)]^{r}\lesssim\sum_{j=1}^{\infty}2^{-|j-l|N}\int_{\Sp}2^{ln}(1+2^{l}d(x,\omega;y,v)^{2})^{-\alpha r}|\theta_{\nu,2^{-j}\sigma}(D)h(y)|^{r}\,\ud y\ud\nu
\end{align*}
for all $l\in\N$ and $\sigma\in(1,2)$. Hence the triangle inequality and Minkowski's inequality yield
\begin{equation}\label{eq:longeq}
\begin{aligned}
&\bigg(\int_{1}^{2}[M_{\alpha}^{*}(h)(x,\omega,2^{-l}\sigma)]^{2}\,\frac{\ud\sigma}{\sigma}\bigg)^{r/2}\\
&\lesssim \bigg(\int_{1}^{2}\bigg(\sum_{j=1}^{\infty}2^{-|j-l|N}\int_{\Sp}2^{ln}(1+2^{l}d(x,\omega;y,v)^{2})^{-\alpha r}|\theta_{\nu,2^{-j}\sigma}(D)h(y)|^{r}\,\ud y\ud\nu\bigg)^{2/r}\frac{\ud\sigma}{\sigma}\bigg)^{r/2}\\
&\leq \sum_{j=1}^{\infty}2^{-|j-l|N}\int_{\Sp}2^{ln}(1+2^{l}d(x,\omega;y,v)^{2})^{-\alpha r}\bigg(\int_{1}^{2}|\theta_{\nu,2^{-j}\sigma}(D)h(y)|^{2}\frac{d\sigma}{\sigma}\bigg)^{r/2}\ud y\ud\nu.
\end{aligned}
\end{equation}
Next, we will bound each of the terms in this series separately.

Momentarily fix $j\in\N$, and write
\[
F(y,\nu):=\bigg(\int_{1}^{2}|\theta_{\nu,2^{-j}\sigma}(D)h(y)|^{2}\,\frac{\ud\sigma}{\sigma}\bigg)^{r/2}
\]
for $(y,\nu)\in\Sp$. Also let $\mathcal{M}$ be the centered Hardy-Littlewood operator on $(\Sp,d,\ud x\ud\omega)$ given by
\begin{align*}
\mathcal{M}(f)(x,\omega):=\sup_{(x,\omega)\in B}\frac{1}{V(B)}\int_{B}|f(y,\nu)| \,\ud y\ud\nu
\end{align*}
for $f\in L^{1}_{\mathrm{loc}}(\Sp)$, where the supremum is taken over all balls $B\subseteq\Sp$ with center $(x,\w)$. Then
\[
\int_{\Sp}2^{ln}\Big(1+2^{l}d(x,\omega;y,v)^{2}\Big)^{-\alpha r}F(y,\nu)\, \ud y\ud\nu=\sum_{k=0}^\infty \int_{C_{k}}\frac{1}{2^{-ln}}\Big(1+\frac{d(x,\omega;y,v)^{2}}{2^{-l}}\Big)^{-\alpha r}F(y,\nu)\,\ud y\ud\nu,
\]
where
\[
C_{0}=\{(y,\nu)\in\Sp: d(x,\omega;y,v)\leq \sqrt{2}\ 2^{-l/2}\}=B_{\sqrt{2}\,2^{-l/2}}(x,\w)
\]
and
\[
C_{k}=\{(y,\nu)\in\Sp: 2^{k/2}2^{-l/2}<d(x,\omega;y,v)\leq 2^{(k+1)/2}2^{-l/2} \}
\]
for $k\in\N$. We bound each of the terms in this series separately, recalling from Lemma \ref{lem7} that $V(B_\tau(x,\omega))\lesssim \tau^{2n}$ for all $\tau>0$. We obtain
\begin{align*}
&\int_{C_{0}}\frac{1}{2^{-ln}}\Big(1+\frac{d(x,\omega;y,v)^{2}}{2^{-l}}\Big)^{-\alpha r}F(y,\nu)\,\ud y\ud\nu\leq\int_{C_{0}}2^{ln}F(y,\nu) \, \ud y\ud\nu\\
&=V(B_{\sqrt{2}\ 2^{-l/2}}(x,\omega)) 2^{ln}\fint_{B_{\sqrt{2}\ 2^{-l/2}}(x,\omega)}F(y,\nu) \, \ud y\ud\nu\lesssim 2^n\mathcal{M}(F)(x,\omega)
\end{align*}
and, for $k\in\N$,
\begin{align*}
&\int_{C_{k}}\frac{1}{2^{-ln}}\Big(1+\frac{d(x,\omega;y,v)^{2}}{2^{-l}}\Big)^{-\alpha r}F(y,\nu)\,\ud y\ud\nu\leq \frac{1}{2^{k\alpha r}} \frac{1}{2^{-ln}}\int_{C_{k}}F(y,\nu)\, \ud y\ud\nu\\
&\leq \frac{1}{2^{k\alpha r}}\frac{1}{2^{-ln}}V(B_{2^{(k+1)/2}2^{-l/2}}(x,\omega))\fint_{B_{2^{(k+1)/2}2^{-l/2}}(x,\omega)}F(y,\nu)\, \ud y\ud\nu\lesssim \frac{1}{2^{k(\alpha r-n)}}2^n\mathcal{M}(F)(x,\omega).
\end{align*}
Since $r>\frac{n}{\alpha}$, the series converges and we obtain
\[
\int_{\Sp}2^{ln}\Big(1+2^{l}d(x,\omega;y,v)^{2}\Big)^{-\alpha r}F(y,\nu)\, \ud y\ud\nu\lesssim \mathcal{M}(F)(x,\omega)=\mathcal{M}\bigg[\bigg(\int_{1}^{2}|\theta_{\cdot,2^{-j}\sigma}(D)h(\cdot)|^{2}\,\frac{\ud\sigma}{\sigma}\bigg)^{r/2}\bigg](x,\omega).
\]
Now \eqref{eq:longeq} yields
\begin{equation}\label{eq:individualterm}
\bigg(\int_{1}^{2}[M_{\alpha}^{*}(h)(x,\omega,2^{-l}\sigma)]^{2}\,\frac{\ud\sigma}{\sigma}\bigg)^{r/2}\lesssim\sum_{j=1}^{\infty}2^{-|j-l|N}\mathcal{M}\bigg[\bigg(\int_{1}^{2}|\theta_{\cdot,2^{-j}\sigma}(D)h(\cdot)|^{2}\,\frac{\ud\sigma}{\sigma}\bigg)^{r/2}\bigg](x,\omega).
\end{equation}
We have now obtained suitable bounds for each of the terms in our original sequence, and we will use these bounds to complete the proof of \eqref{a7}.

For $(x,\w)\in\Sp$ and $j\in\N$, write
\[
g_{j}(x,\omega):=\mathcal{M}\bigg{[}\bigg{(}\int_{1}^{2}|\theta_{\cdot,2^{-j}\sigma}(D)h(\cdot)|^{2}\,\frac{\ud\sigma}{\sigma}\bigg{)}^{r/2}\bigg{]}(x,\omega),
\]
and for $l\in\N$ set
\[
h_{l}(x,\omega):=\sum_{j=1}^{\infty}2^{-|j-l|N}g_{j}(x,\omega).
\]
Then we can combine \eqref{eq:individualterm} with Lemma \ref{lem3}, as well as the boundedness of $\mathcal{M}$ on $L^{1/r}(\Sp;\ell^{2/r})$ (see \cite[Section 6.6]{GeGoKoKr98}), to obtain
\begin{align*}
&\int_{\Sp}\bigg{(}\int_{0}^{1}[M_{\alpha}^{*}(h)(x,\omega,\sigma)]^{2}\,\frac{\ud\sigma}{\sigma}\bigg{)}^{1/2}\ud x\ud\w=\bigg\|\bigg\{\bigg(\int_{1}^{2}[M_{\alpha}^{*}(h)(\cdot,\cdot,2^{-l}\sigma)]^{2}\,\frac{\ud\sigma}{\sigma}\bigg)^{r/2}\bigg\}_{l=1}^{\infty}\bigg\|_{L^{1/r}(\Sp;\ell^{2/r})}^{1/r}\\
&\lesssim \|\{h_{l}\}_{l=1}^{\infty}\|^{1/r}_{L^{1/r}(\Sp;\ell^{2/r})}\lesssim \|\{g_{j}\}_{j=1}^{\infty}\|^{1/r}_{L^{1/r}(\Sp;\ell^{2/r})}=\bigg\|\bigg\{\mathcal{M}\bigg[\bigg(\int_{1}^{2}|\theta_{\cdot,2^{-j}\sigma}(D)h(\cdot)|^{2}\,\frac{\ud\sigma}{\sigma}\bigg)^{r/2}\bigg]\bigg\}_{j=1}^{\infty}\bigg\|_{L^{1/r}(\Sp;\ell^{2/r})}^{1/r}\\
&\lesssim\bigg\|\bigg\{\bigg(\int_{1}^{2}|\theta_{\cdot,2^{-j}\sigma}(D)h(\cdot)|^{2}\,\frac{\ud\sigma}{\sigma}\bigg)^{r/2}\bigg\}_{j=1}^{\infty}\bigg\|_{L^{1/r}(\Sp;\ell^{2/r})}^{1/r}=\bigg\|\bigg(\int_{0}^{1}|\theta_{\cdot,\sigma}(D)h(\cdot)|^{2}\,\frac{\ud\sigma}{\sigma}\bigg)^{1/2}\bigg\|_{L^{1}(\Sp)}\\
&=\|G(h)\|_{L^{1}(\Sp)}.
\end{align*}
This concludes the proof of \eqref{a7} and thereby of the theorem.
\end{proof}

\section{Maximal function characterization}\label{sec:maximal}

In \cite{Rozendaal21} a maximal function characterization of $\mathcal{H}_{FIO}^{p}(\mathbb{R}^{n})$ was obtained for $1<p<\infty$. As an immediate corollary of what we have already shown, we can extend this characterization to $\mathcal{H}_{FIO}^{1}(\mathbb{R}^{n})$, by showing that $\HT^{1}_{FIO}(\Rn)=\HT^{1}_{FIO,\max}(\Rn)$.

\begin{theorem}\label{pro3}
One has
\[
\mathcal{H}_{FIO}^{1}(\mathbb{R}^{n})= \mathcal{H}_{FIO,\max}^{1}(\mathbb{R}^{n})
\]
with equivalence of norms.
\end{theorem}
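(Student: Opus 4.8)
The plan is to deduce $\HT^1_{FIO}(\Rn)=\HT^1_{FIO,\max}(\Rn)$ from Theorem~\ref{pro1}, which identifies $\HT^1_{FIO}(\Rn)$ with $\HT^1_{FIO,G}(\Rn)$, together with the characterization in Proposition~\ref{prop:equivalent} expressing the $\HT^1_{FIO,G}(\Rn)$-norm as $\int_{S^{n-1}}\|\ph_\w(D)f\|_{H^1(\Rn)}\,\ud\w+\|q(D)f\|_{L^1(\Rn)}$. The point is that $\HT^1_{FIO,\max}(\Rn)$ is defined by an analogous fibrewise quantity, namely $\int_{S^{n-1}}\big(\int_{\Rn}\sup_{\sigma>0}|\Phi_\sigma(D)\ph_\w(D)f(x)|\,\ud x\big)\ud\w+\|q(D)f\|_{L^1(\Rn)}$, and so the theorem should follow from the classical grand/radial maximal function characterization of the local Hardy space $\HT^1(\Rn)$ applied to each fibre $\ph_\w(D)f$.

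First I would fix a Schwartz function $\Phi$ with $\Phi(0)=1$ as in the statement and set up the classical fact (Fefferman--Stein, in the form adapted to the local Hardy space by Goldberg) that for $g\in\Sw'(\Rn)$ which is a bounded distribution one has $g\in h^1(\Rn)$ if and only if $\sup_{0<\sigma<1}|\Phi_\sigma(D)g|\in L^1(\Rn)$, with comparable norms; and more importantly, for $g$ whose Fourier transform is supported away from the origin (here $\ph_\w(D)f$, with frequency support in $|\xi|\gtrsim 1$), this coincides with the $H^1(\Rn)$-norm and one may take the supremum over all $\sigma>0$, i.e. $\|g\|_{H^1(\Rn)}\eqsim\|\sup_{\sigma>0}|\Phi_\sigma(D)g|\|_{L^1(\Rn)}$. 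Since $\ph_\w(D)f$ has Fourier support in $\{|\xi|\ge 1/2\}$ uniformly in $\w$ by \eqref{eq:dyadicpar}-type support properties, and the implicit constants in the maximal function characterization of $H^1(\Rn)$ are uniform (they depend only on $\Phi$ and the dimension, not on the particular function), I would integrate this equivalence over $\w\in S^{n-1}$. Combined with Fubini's theorem — noting $\Phi_\sigma(D)\ph_\w(D)f(x)=\Phi_\sigma(D)(\ph_\w(D)f)(x)$ and that the relevant integrands are nonnegative and measurable — this gives
\[
\int_{S^{n-1}}\|\ph_\w(D)f\|_{H^1(\Rn)}\,\ud\w\eqsim\int_{S^*(\Rn)}\sup_{\sigma>0}|\Phi_\sigma(D)\ph_\w(D)f(x)|\,\ud x\ud\w,
\]
and adding $\|q(D)f\|_{L^1(\Rn)}$ to both sides and invoking Theorem~\ref{pro1} and Proposition~\ref{prop:equivalent} yields the claimed norm equivalence, along with the matching of the defining conditions (boundedness of $\ph_\w(D)f$ as a distribution corresponds to finiteness of the radial maximal function fibrewise).

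The main obstacle I anticipate is bookkeeping around the low-frequency behaviour and the precise form of the maximal function characterization of $H^1(\Rn)$ versus $h^1(\Rn)$: one must be careful that $\Phi_\sigma(D)\ph_\w(D)f$ for large $\sigma$ is controlled (which is automatic since $\ph_\w(D)f$ has no low frequencies, so $\Phi_\sigma(D)\ph_\w(D)f\to 0$ as $\sigma\to\infty$ appropriately), and that the supremum over $\sigma>0$ in Definition~\ref{def:Hardyothers} genuinely reproduces the $H^1$-norm on each fibre rather than the $h^1$-norm. A secondary point requiring care is the uniformity in $\w$ of all implied constants, and the measurability in $\w$ of the fibrewise maximal functions, so that Fubini applies; this is routine but should be stated. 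I would also remark, as the paper does elsewhere, that the result is independent of the choice of $\Phi$ (as long as $\Phi(0)=1$), which again follows from the corresponding independence in the classical characterization of $H^1(\Rn)$.
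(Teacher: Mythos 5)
Your proposal is correct and takes essentially the same approach as the paper: apply the fibrewise radial maximal-function characterization of $H^{1}(\Rn)$ to $\ph_{\w}(D)f$, integrate over $\w\in S^{n-1}$, and combine with Proposition~\ref{prop:equivalent} and Theorem~\ref{pro1}. The paper simply invokes the maximal characterization of $H^{1}(\Rn)$ directly (Grafakos, Thm.~2.1.4), bypassing your detour through $h^{1}(\Rn)$, which is unnecessary here since Definition~\ref{def:Hardyothers} already takes the supremum over all $\sigma>0$.
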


\begin{proof}
From the maximal function characterization of $H^{1}(\Rn)$ (see \cite[Theorem 2.1.4]{Grafakos14b}), we know that an $f\in\Sw'(\Rn)$ satisfies $f\in \mathcal{H}_{FIO,\max}^{1}(\mathbb{R}^{n})$ if and only if $q(D)f\in L^{1}(\Rn)$, $\ph_{\w}(D)f\in H^{1}(\Rn)$ for almost all $\w\in S^{n-1}$, and
\[
\int_{S^{n-1}}\|\varphi_{\omega}(D)f\|_{H^{1}(\mathbb{R}^{n})}\,\ud \omega<\infty.
\]
Moreover, in this case one has
\[
\|f\|_{\mathcal{H}_{FIO,\max}^{1}(\mathbb{R}^{n})}\eqsim \int_{S^{n-1}}\|\varphi_{\omega}(D)f\|_{H^{1}(\mathbb{R}^{n})}\,\ud \omega+\|q(D)f\|_{L^{1}(\mathbb{R}^{n})}.
\]
Hence the required statement follows from Proposition \ref{prop:equivalent} and Theorem \ref{pro1}.
\end{proof}

\begin{remark}\label{rem:pinfty}
The proof of Theorem \ref{pro3} relies on the following characterization of $\HT^{1}_{FIO}(\Rn)$, obtained by combining Proposition \ref{prop:equivalent} and Theorem \ref{pro1}: an $f\in\Sw'(\Rn)$ satisfies $f\in \HT^{1}_{FIO}(\Rn)$ if and only if $q(D)f\in L^{1}(\Rn)$, $\ph_{\w}(D)f\in H^{1}(\Rn)$ for almost all $\w\in S^{n-1}$, and $\int_{S^{n-1}}\|\varphi_{\omega}(D)f\|_{H^{1}(\mathbb{R}^{n})}\,\ud \omega<\infty$. In this case one has
\[
\|f\|_{\mathcal{H}_{FIO}^{1}(\mathbb{R}^{n})}\eqsim \int_{S^{n-1}}\|\varphi_{\omega}(D)f\|_{H^{1}(\mathbb{R}^{n})}\,\ud \omega+\|q(D)f\|_{L^{1}(\mathbb{R}^{n})}.
\]
A similar characterization of $\Hp$ was obtained in \cite{Rozendaal21} for $1<p<\infty$, but it is not clear whether one can also characterize $\HT^{\infty}_{FIO}(\Rn)$ in this manner. More precisely, a natural question is whether an $f\in\Sw'(\Rn)$ satisfies $f\in \HT^{\infty}_{FIO}(\Rn)$ if and only if $q(D)f\in L^{\infty}(\Rn)$, $\ph_{\w}(D)f\in \BMO(\Rn)$ for almost all $\w\in S^{n-1}$, and $\esssup_{\w\in S^{n-1}}\|\varphi_{\omega}(D)f\|_{\BMO(\mathbb{R}^{n})}<\infty$, and whether in this case
\[
\|f\|_{\mathcal{H}_{FIO}^{\infty}(\mathbb{R}^{n})}\eqsim \esssup_{\w\in S^{n-1}}\|\varphi_{\omega}(D)f\|_{\BMO(\mathbb{R}^{n})}+\|q(D)f\|_{L^{\infty}(\mathbb{R}^{n})}.
\]
One can use duality to show that if $q(D)f\in L^{\infty}(\Rn)$, $\ph_{\w}(D)f\in \BMO(\Rn)$ for almost all $\w\in S^{n-1}$, and $\esssup_{\w\in S^{n-1}}\|\varphi_{\omega}(D)f\|_{\BMO(\mathbb{R}^{n})}<\infty$, then $f\in\HT^{\infty}_{FIO}(\Rn)$ with
\[
\|f\|_{\mathcal{H}_{FIO}^{\infty}(\mathbb{R}^{n})}\lesssim \esssup_{\w\in S^{n-1}}\|\varphi_{\omega}(D)f\|_{\BMO(\mathbb{R}^{n})}+\|q(D)f\|_{L^{\infty}(\mathbb{R}^{n})}.
\]
However, it is not clear whether the reverse inequality also holds. We leave this as an open problem.
\end{remark}

\section{${\mathcal G}_{\alpha}^{*}$ characterization}\label{sec:conical}

In this section, we will prove that $\mathcal{H}_{FIO}^{1}(\mathbb{R}^{n})= \mathcal{H}_{FIO,\mathcal{G}_{\alpha}^{*}}^{1}(\mathbb{R}^{n})$ for $\alpha>2$. To do so, we will need the following quantitative change of aperture formula from \cite[Lemma 2.2]{Rozendaal21} (see also \cite{Auscher11}).

\begin{lemma}\label{lem5}
There exists a $C\geq 0$ such that, for all $\lambda\geq 1$ and $F\in L^{2}_{\mathrm{loc}}(\Sp\times(0,\infty))$, one has
\begin{align*}
&\int_{\Sp}\bigg(\int_{0}^{\infty}\fint_{B_{\lambda\sqrt{\sigma}}(x,\omega)}|F(y,\nu,\sigma)|^{2}\,\ud y\ud \nu \frac{\ud \sigma}{\sigma}\bigg)^{1/2}\ud x\ud \omega\\
&\leq C \lambda^{n}\int_{\Sp}\bigg(\int_{0}^{\infty}\fint_{B_{\sqrt{\sigma}}(x,\omega)}|F(y,\nu,\sigma)|^{2}\,\ud y\ud \nu \frac{\ud \sigma}{\sigma}\bigg)^{1/2}\ud x\ud \omega
\end{align*}
whenever the second term is finite.
\end{lemma}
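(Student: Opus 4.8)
The statement is \cite[Lemma 2.2]{Rozendaal19} (see also \cite{Auscher11}); here is the route I would follow. Abbreviate
\[
A_{\lambda}F(x,\omega):=\Big(\int_{0}^{\infty}\fint_{B_{\lambda\sqrt{\sigma}}(x,\omega)}|F(y,\nu,\sigma)|^{2}\,\ud y\,\ud\nu\,\frac{\ud\sigma}{\sigma}\Big)^{1/2},
\]
so that the claim reads $\|A_{\lambda}F\|_{L^{1}(\Sp)}\lesssim\lambda^{n}\|A_{1}F\|_{L^{1}(\Sp)}$. After the cosmetic change of variable $\sigma\mapsto t=\sqrt{\sigma}$, $A_{1}F$ is the conical square function defining the tent space $T^{1}_{2}$ over the doubling metric measure space $(\Sp,d,\ud x\,\ud\omega)$ of Lemma~\ref{lem7}, and $A_{\lambda}F$ is the analogous square function of aperture $\lambda$. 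The plan is the classical three-step change-of-aperture argument: an $L^{2}$ estimate, a Cauchy--Schwarz localization on tent-space atoms, and summation of the atomic decomposition.

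First I would record the $L^{2}$ bound $\|A_{\lambda}F\|_{L^{2}(\Sp)}\eqsim\|A_{1}F\|_{L^{2}(\Sp)}$, uniformly in $\lambda\geq1$. This is pure Fubini: since $d$ is symmetric, $\{(x,\omega):(y,\nu)\in B_{\lambda\sqrt{\sigma}}(x,\omega)\}=B_{\lambda\sqrt{\sigma}}(y,\nu)$, and on this set Lemma~\ref{lem7} gives $V(B_{\lambda\sqrt{\sigma}}(x,\omega))\eqsim V(B_{\lambda\sqrt{\sigma}}(y,\nu))$ (compare both with $B_{2\lambda\sqrt{\sigma}}$ at either centre). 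Hence $\int_{\Sp}V(B_{\lambda\sqrt{\sigma}}(x,\omega))^{-1}\mathbf{1}_{B_{\lambda\sqrt{\sigma}}(x,\omega)}(y,\nu)\,\ud x\,\ud\omega\eqsim1$, and integrating this against $|F(y,\nu,\sigma)|^{2}$ over $(y,\nu,\sigma)$ gives $\|A_{\lambda}F\|_{2}^{2}\eqsim\int_{0}^{\infty}\int_{\Sp}|F|^{2}\,\ud y\,\ud\nu\,\frac{\ud\sigma}{\sigma}$, with no dependence on $\lambda$.

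Next I would use the atomic decomposition of $T^{1}_{2}(\Sp\times(0,\infty))$, available because $(\Sp,d,\ud x\,\ud\omega)$ is doubling (tent spaces over doubling metric measure spaces; cf.~\cite{Amenta14}): write $F=\sum_{k}c_{k}\mathfrak{a}_{k}$ with $c_{k}\geq0$, $\sum_{k}c_{k}\lesssim\|A_{1}F\|_{L^{1}(\Sp)}$, where each $\mathfrak{a}_{k}$ is supported in the tent $\widehat{B_{k}}$ over a ball $B_{k}=B_{r_{k}}(x_{k},\omega_{k})$ and satisfies $\int|\mathfrak{a}_{k}|^{2}\,\ud y\,\ud\nu\,\frac{\ud\sigma}{\sigma}\leq V(B_{k})^{-1}$. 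Since $d$ is a geodesic metric, $(y,\nu,\sigma)\in\widehat{B_{k}}$ forces $d(y,\nu;x_{k},\omega_{k})+\sqrt{\sigma}\leq r_{k}$, so $A_{\lambda}\mathfrak{a}_{k}(x,\omega)\neq0$ implies $d(x,\omega;x_{k},\omega_{k})<\lambda\sqrt{\sigma}+d(y,\nu;x_{k},\omega_{k})\leq(\lambda+1)r_{k}$ for some admissible $(y,\nu,\sigma)$; thus $A_{\lambda}\mathfrak{a}_{k}$ is supported in $B_{2\lambda r_{k}}(x_{k},\omega_{k})$, of volume $\lesssim\lambda^{2n}V(B_{k})$ by Lemma~\ref{lem7}. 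Cauchy--Schwarz and the $L^{2}$ bound then yield $\|A_{\lambda}\mathfrak{a}_{k}\|_{L^{1}(\Sp)}\lesssim(\lambda^{2n}V(B_{k}))^{1/2}\|A_{\lambda}\mathfrak{a}_{k}\|_{L^{2}(\Sp)}\lesssim(\lambda^{2n}V(B_{k}))^{1/2}\|A_{1}\mathfrak{a}_{k}\|_{L^{2}(\Sp)}\lesssim(\lambda^{2n}V(B_{k}))^{1/2}V(B_{k})^{-1/2}=\lambda^{n}$, uniformly in $k$ and $\lambda$. Finally $A_{\lambda}$ is subadditive, so $A_{\lambda}F\leq\sum_{k}c_{k}A_{\lambda}\mathfrak{a}_{k}$ pointwise (after checking convergence of the atomic series), and therefore $\|A_{\lambda}F\|_{L^{1}(\Sp)}\leq\sum_{k}c_{k}\|A_{\lambda}\mathfrak{a}_{k}\|_{L^{1}(\Sp)}\lesssim\lambda^{n}\sum_{k}c_{k}\lesssim\lambda^{n}\|A_{1}F\|_{L^{1}(\Sp)}$, as desired.

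I expect the main obstacle to be organizational rather than conceptual: setting up the tent-space atomic decomposition over this particular metric measure space (including the $\sigma\leftrightarrow\sqrt{\sigma}$ normalization and the precise size and support normalizations of atoms), and justifying the pointwise bound $A_{\lambda}F\leq\sum_{k}c_{k}A_{\lambda}\mathfrak{a}_{k}$ together with convergence of the series in $T^{1}_{2}$. If one prefers to bypass the atomic machinery, one can instead follow \cite{Auscher11}, where the change of aperture is handled directly at the level of distribution functions starting from the $L^{2}$ bound above.
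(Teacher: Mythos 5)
The paper does not prove Lemma~\ref{lem5}: it cites it directly from \cite[Lemma 2.2]{Rozendaal19} (with a pointer to \cite{Auscher11}), so there is no in-paper argument to compare against. Your proposal is a correct proof of the statement, and it is essentially the standard change-of-aperture argument for tent spaces over a doubling space. The three ingredients all check out against the data in the paper: the $\lambda$-uniform $L^{2}$ identity by Fubini and symmetry of $d$; the support inclusion $\operatorname{supp}(A_{\lambda}\mathfrak{a}_{k})\subseteq B_{2\lambda r_{k}}(x_{k},\omega_{k})$; and the atom estimate $\|A_{\lambda}\mathfrak{a}_{k}\|_{L^{1}}\lesssim V(B_{2\lambda r_{k}})^{1/2}\|A_{\lambda}\mathfrak{a}_{k}\|_{L^{2}}\lesssim(\lambda^{2n})^{1/2}=\lambda^{n}$, where the exponent $2n$ comes exactly from Lemma~\ref{lem7} and produces the claimed $\lambda^{n}$ after the square root. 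The passage $A_{\lambda}F\leq\sum_{k}c_{k}A_{\lambda}\mathfrak{a}_{k}$ is Minkowski's integral inequality once the atomic series converges pointwise a.e.\ in the tent-space $L^{2}$ fibre, which is part of the atomic decomposition theory you invoke.

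Two small caveats worth making explicit if you wrote this out in full. First, the inference ``$B_{\sqrt{\sigma}}(y,\nu)\subseteq B_{k}$ forces $d(y,\nu;x_{k},\omega_{k})+\sqrt{\sigma}\leq r_{k}$'' requires $d$ to be a length (or geodesic) metric; the paper defines $d$ in \eqref{juli} as a curve-length infimum, so this holds, but it is not stated as such, and one can alternatively sidestep it by taking the tent to be $\{(y,\nu,\sigma):d(y,\nu;x_{k},\omega_{k})+\sqrt{\sigma}<r_{k}\}$ from the outset, which is what is commonly done and what you in fact use. Second, the atomic decomposition of $T^{1}_{2}$ over a general doubling metric measure space is not completely elementary; citing \cite{Amenta14} (already in the paper's bibliography) for it, as you do, is the right call. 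If one wanted to avoid the atomic machinery entirely, the alternative route via good-$\lambda$/distribution-function estimates that you mention at the end is precisely what \cite{Auscher11} does, and it yields the same $\lambda^{n}$ exponent; either route is a faithful proof of the cited lemma.
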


For the next theorem, recall that $\mathcal{H}_{FIO,\mathcal{G}_{\alpha}^{*}}^{1}(\mathbb{R}^{n})$ consists of all $f\in \Sw'(\mathbb{R}^{n})$ such that $\mathcal{G}_{\alpha}^{*}(f)\in L^{1}(\Sp)$ and $q(D)f\in L^{1}(\mathbb{R}^{n})$, endowed with the norm
\[
\|f\|_{\mathcal{H}_{FIO,\mathcal{G}_{\alpha}^{*}}^{1}(\mathbb{R}^{n})}=\|\mathcal{G}_{\alpha}^{*}(f)\|_{L^{1}(\Sp)}+\|q(D)f\|_{L^{1}(\mathbb{R}^{n})}.
\]
Here
\[
\mathcal{G}_{\alpha}^{*}(f)(x,\omega)=\bigg(\int_{0}^{1}\int_{\Sp}
\frac{|\theta_{\nu,\sigma}(D)f(y)|^{2}}{\sigma^{n}(1+\sigma^{-1}
d(x,\omega;y,\nu)^{2})^{n\alpha}}\, \ud y\ud\nu\frac{\ud\sigma}{\sigma}\bigg)^{1/2}
\]
for $(x,\w)\in\Sp$.

\begin{theorem}\label{pro2}
Let $\alpha>2$. Then
\[
\mathcal{H}_{FIO}^{1}(\mathbb{R}^{n})= \mathcal{H}_{FIO,\mathcal{G}_{\alpha}^{*}}^{1}(\mathbb{R}^{n})
\]
with equivalent norms.
\end{theorem}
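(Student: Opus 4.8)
The plan is to prove the two-sided norm equivalence $\|\mathcal{G}_{\alpha}^{*}(f)\|_{L^{1}(S^{*}(\mathbb{R}^{n}))}\eqsim \|S(f)\|_{L^{1}(S^{*}(\mathbb{R}^{n}))}$, valid whenever either side is finite. Since the low-frequency terms $\|q(D)f\|_{L^{1}(\mathbb{R}^{n})}$ occur identically in both norms, and since membership of $f\in\Sw'(\mathbb{R}^{n})$ in either space amounts to finiteness of the relevant square function together with $q(D)f\in L^{1}(\mathbb{R}^{n})$, this equivalence yields $\mathcal{H}_{FIO}^{1}(\mathbb{R}^{n})=\mathcal{H}_{FIO,\mathcal{G}_{\alpha}^{*}}^{1}(\mathbb{R}^{n})$ with equivalent norms. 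It is worth noting that, unlike Theorems \ref{pro1} and \ref{pro3}, this result is essentially self-contained: it uses only the change-of-aperture estimate in Lemma \ref{lem5} and the volume bounds in Lemma \ref{lem7}, and in particular does not rely on the $g$ function characterization.

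The inequality $\|S(f)\|_{L^{1}}\lesssim\|\mathcal{G}_{\alpha}^{*}(f)\|_{L^{1}}$ is the easy direction and holds for any $\alpha>0$. First I would establish the pointwise bound $S(f)(x,\omega)\lesssim \mathcal{G}_{\alpha}^{*}(f)(x,\omega)$ for all $(x,\omega)\in S^{*}(\mathbb{R}^{n})$. Indeed, for $\sigma\in(0,1)$ Lemma \ref{lem7} gives $V(B_{\sqrt{\sigma}}(x,\omega))\gtrsim\sigma^{n}$, while for $(y,\nu)\in B_{\sqrt{\sigma}}(x,\omega)$ one has $1\leq 1+\sigma^{-1}d(x,\omega;y,\nu)^{2}\leq 2$, so the weight $(1+\sigma^{-1}d(x,\omega;y,\nu)^{2})^{-n\alpha}$ is bounded below on that ball. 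Hence
\[
\fint_{B_{\sqrt{\sigma}}(x,\omega)}|\theta_{\nu,\sigma}(D)f(y)|^{2}\,\ud y\ud\nu\lesssim \frac{1}{\sigma^{n}}\int_{S^{*}(\mathbb{R}^{n})}\frac{|\theta_{\nu,\sigma}(D)f(y)|^{2}}{(1+\sigma^{-1}d(x,\omega;y,\nu)^{2})^{n\alpha}}\,\ud y\ud\nu,
\]
and integrating against $\ud\sigma/\sigma$ over $(0,1)$ and taking square roots gives $S(f)(x,\omega)\lesssim\mathcal{G}_{\alpha}^{*}(f)(x,\omega)$, from which the $L^{1}$ bound is immediate.

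For the reverse inequality $\|\mathcal{G}_{\alpha}^{*}(f)\|_{L^{1}}\lesssim\|S(f)\|_{L^{1}}$, which is where the hypothesis $\alpha>2$ enters, I would decompose $S^{*}(\mathbb{R}^{n})$ dyadically around $(x,\omega)$ at scale $\sqrt{\sigma}$: set $A_{0}=B_{\sqrt{\sigma}}(x,\omega)$ and $A_{k}=\{(y,\nu)\in S^{*}(\mathbb{R}^{n}):2^{(k-1)/2}\sqrt{\sigma}\leq d(x,\omega;y,\nu)<2^{k/2}\sqrt{\sigma}\}$ for $k\geq 1$. On $A_{k}$ one has $(1+\sigma^{-1}d(x,\omega;y,\nu)^{2})^{-n\alpha}\eqsim 2^{-kn\alpha}$ and $A_{k}\subseteq B_{2^{k/2}\sqrt{\sigma}}(x,\omega)$, and Lemma \ref{lem7} gives $V(B_{2^{k/2}\sqrt{\sigma}}(x,\omega))\lesssim 2^{kn}\sigma^{n}$ uniformly in $k\geq 0$ and $\sigma\in(0,1)$ (checking the two regimes $2^{k/2}\sqrt{\sigma}<1$ and $2^{k/2}\sqrt{\sigma}\geq 1$ separately). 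Writing $S_{\lambda}(f)(x,\omega):=\big(\int_{0}^{1}\fint_{B_{\lambda\sqrt{\sigma}}(x,\omega)}|\theta_{\nu,\sigma}(D)f(y)|^{2}\,\ud y\ud\nu\,\tfrac{\ud\sigma}{\sigma}\big)^{1/2}$ for the area function of aperture $\lambda$, this yields
\[
\mathcal{G}_{\alpha}^{*}(f)(x,\omega)^{2}\lesssim\sum_{k=0}^{\infty}2^{-kn(\alpha-1)}S_{2^{k/2}}(f)(x,\omega)^{2}.
\]
Taking square roots, using subadditivity of $t\mapsto\sqrt{t}$, integrating over $S^{*}(\mathbb{R}^{n})$, and applying Lemma \ref{lem5} with $\lambda=2^{k/2}$ to each term (extending the integrand by zero for $\sigma\geq 1$), I obtain
\[
\|\mathcal{G}_{\alpha}^{*}(f)\|_{L^{1}}\lesssim\sum_{k=0}^{\infty}2^{-kn(\alpha-1)/2}\|S_{2^{k/2}}(f)\|_{L^{1}}\lesssim\sum_{k=0}^{\infty}2^{-kn(\alpha-1)/2}\,2^{kn/2}\,\|S(f)\|_{L^{1}}=\sum_{k=0}^{\infty}2^{-kn(\alpha-2)/2}\,\|S(f)\|_{L^{1}},
\]
and the geometric series converges precisely because $\alpha>2$.

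The theorem is not deep; the only point requiring care is the bookkeeping of ball volumes in the annular decomposition, since by Lemma \ref{lem7} the balls $B_{\tau}(x,\omega)$ scale like $\tau^{2n}$ for $\tau<1$ but only like $\tau^{n}$ for $\tau\geq 1$, so one must verify that $V(B_{2^{k/2}\sqrt{\sigma}}(x,\omega))\lesssim 2^{kn}\sigma^{n}$ holds in both regimes for $\sigma\in(0,1)$; this is what keeps the exponent in the final geometric sum exactly equal to $2^{-kn(\alpha-2)/2}$, pinning down the threshold $\alpha>2$. Apart from this, the argument is the natural transcription to the doubling space $S^{*}(\mathbb{R}^{n})$ of the classical comparison between the area integral and the Littlewood--Paley $g_{\lambda}^{*}$ function for $\lambda>2$, with Lemma \ref{lem5} playing the role of the Fefferman--Stein change-of-aperture inequality.
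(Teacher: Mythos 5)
Your proof is correct and reaches the same threshold $\alpha>2$ as the paper, using the same two key ingredients (the change-of-aperture inequality of Lemma~\ref{lem5} and the volume bounds of Lemma~\ref{lem7}), but with a slightly different dyadic bookkeeping in the hard direction. You decompose $S^{*}(\mathbb{R}^{n})$ into dyadic annuli $A_{k}$ at scale $\sqrt{\sigma}$, on which the weight is $\eqsim 2^{-kn\alpha}$, and pay a volume factor $\lesssim 2^{kn}$ relative to the unit-aperture ball; the net weight of the $k$-th aperture-$2^{k/2}$ area function is then $2^{-kn(\alpha-1)/2}$ pointwise, and Lemma~\ref{lem5} costs $(2^{k/2})^{n}=2^{kn/2}$, giving the geometric ratio $2^{-kn(\alpha-2)/2}$. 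The paper instead decomposes the weight itself via the layer-cake-type bound $(1+s)^{-n\alpha}\leq \ind_{[0,1]}(s)+2\sum_{k\geq1}2^{-k}\ind_{[0,1]}(s/2^{k/(n\alpha)})$, which produces area functions of more slowly growing aperture $2^{k/(2n\alpha)}$ with a fixed exponential weight $2^{-k}$; the pointwise factor is then $2^{-k/2+k/(2\alpha)}$, the change-of-aperture costs $2^{k/(2\alpha)}$, and the geometric ratio is $2^{-k/2+k/\alpha}$, which converges for exactly the same range $\alpha>2$. The two decompositions are related by a reparametrization of the dyadic index, so neither has a real advantage here; yours is perhaps closer to the classical $g_{\lambda}^{*}$-versus-area-function proof, while the paper's is marginally more compact because it starts from a one-line inequality for the weight function rather than an annular partition of the space. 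One small point worth keeping in your write-up, as you correctly flag: the uniform bound $V(B_{2^{k/2}\sqrt{\sigma}}(x,\omega))\lesssim 2^{kn}\sigma^{n}$ must be checked in both regimes $2^{k/2}\sqrt{\sigma}<1$ and $\geq 1$ of Lemma~\ref{lem7}; the second regime gives $V\lesssim 2^{kn/2}\sigma^{n/2}$, which is dominated by $2^{kn}\sigma^{n}$ precisely because $2^{k/2}\sqrt{\sigma}\geq 1$ there, so the estimate holds throughout.
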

\begin{proof}
We first show that $\mathcal{H}_{FIO,\mathcal{G}_{\alpha}^{*}}^{1}(\mathbb{R}^{n})\subseteq\HT^{1}_{FIO}(\Rn)$. Let $f\in \mathcal{H}_{FIO,\mathcal{G}_{\alpha}^{*}}^{1}(\mathbb{R}^{n})$. It suffices to prove that $S(f)\in L^{1}(\Sp)$ with $\|S(f)\|_{L^{1}(\Sp)}\lesssim \|\mathcal{G}_{\alpha}^{*}(f)\|_{L^{1}(\Sp)}$. To this end, observe that for all $(x,\w)\in\Sp$, $\sigma>0$ and $(y,\nu)\in B_{\sqrt{\sigma}}(x,\w)$, one has $1+\sigma^{-1}d(x,\omega;y,v)^{2}\leq 2$. Moreover, Lemma \ref{lem7} yields that $V(B_{\sqrt{\sigma}}(x,\w))\eqsim \sigma^{n}$ for all $\sigma\in(0,1)$. Hence
\begin{align*}
S(f)(x,\omega)&=\bigg(\int_{0}^{1}\fint_{B_{\sqrt{\sigma}}(x,\omega)}|\theta_{\nu,\sigma}(D)f(y)|^{2} \,\ud y\ud \nu\frac{\ud \sigma}{\sigma}\bigg)^{1/2}\\
&\lesssim \bigg(\int_{0}^{1}\int_{B_{\sqrt{\sigma}}(x,\omega)}
\frac{|\theta_{\nu,\sigma}(D)f(y)|^{2}}{V(B_{\sqrt{\sigma}}(x,\w))(1+\sigma^{-1}d(x,\omega;y,\nu)^{2})^{n\alpha}}\,\ud y\ud \nu\frac{\ud \sigma}{\sigma}\bigg)^{1/2}\\
&\lesssim \mathcal{G}_{\alpha}^{*}(f)(x,\omega).
\end{align*}
Thus $S(f)\in L^{1}(\Sp)$ with $\|S(f)\|_{L^{1}(\Sp)}\lesssim \|\mathcal{G}_{\alpha}^{*}(f)\|_{L^{1}(\Sp)}$.

For the other inclusion we let $f\in \HT^{1}_{FIO}(\Rn)$ and show that $\|\mathcal{G}_{\alpha}^{*}(f)\|_{L^{1}(\Sp)}\lesssim \|S(f)\|_{L^{1}(\Sp)}$. Note that
\begin{align*}
(1+s)^{-n\alpha}\leq\ind_{[0,1]}(s)+2 \sum_{k=1}^{\infty}2^{-k}\mathbf{1}_{_{[0,1]}}\bigg(\frac{s}{2^{k/(n\alpha)}}\bigg)=:g(s)
\end{align*}
for all $s\geq0$, as can be seen for $s>1$ by letting $k_{0}\in\N$ be such that $2^{(k_{0}-1)/n\alpha}<s\leq 2^{k_{0}/n\alpha}$. Now apply Lemma \ref{lem7} to obtain, for all $(x,\w)\in\Sp$,
\begin{align*}
&\mathcal{G}_{\alpha}^{*}(f)(x,\omega)
\leq \bigg(\int_{0}^{1}\int_{\Sp}\sigma^{-n}|\theta_{\nu,\sigma}(D)f(y)|^{2}g(\sigma^{-1}d(x,\omega;y,\nu)^{2}) \,\ud y\ud \nu\frac{\ud \sigma}{\sigma}\bigg)^{\frac{1}{2}}\\
&\eqsim \bigg(\int_{0}^{1}\int_{B_{\sqrt{\sigma}}(x,\omega)}\sigma^{-n}|\theta_{\nu,\sigma}(D)f(y)|^{2}
\ud y\ud \nu\frac{\ud \sigma}{\sigma}+\sum_{k=1}^{\infty}2^{-k}\int_{0}^{1}\int_{B_{2^{k/(2n\alpha)}\sqrt{\sigma}}(x,\omega)}\sigma^{-n}|\theta_{\nu,\sigma}(D)f(y)|^{2}
\,\ud y\ud \nu\frac{\ud \sigma}{\sigma}\bigg)^{\frac{1}{2}}\\
&\lesssim \bigg(\int_{0}^{1}\fint_{B_{\sqrt{\sigma}}(x,\omega)}|\theta_{\nu,\sigma}(D)f(y)|^{2}
\ud y\ud \nu\frac{\ud \sigma}{\sigma}+\sum_{k=1}^{\infty}2^{-k+k/\alpha}\int_{0}^{1}\fint_{B_{2^{k/(2n\alpha)}\sqrt{\sigma}}(x,\omega)}|\theta_{\nu,\sigma}(D)f(y)|^{2}
\ud y\ud \nu\frac{\ud \sigma}{\sigma}\bigg)^{\frac{1}{2}}\\
&\lesssim \bigg(\int_{0}^{1}\fint_{B_{\sqrt{\sigma}}(x,\omega)}|\theta_{\nu,\sigma}(D)f(y)|^{2}
\ud y\ud \nu\frac{\ud \sigma}{\sigma}\bigg)^{\frac{1}{2}}+\sum_{k=1}^{\infty}2^{(-k+k/\alpha)/2}\bigg(\int_{0}^{1}\fint_{B_{2^{k/(2n\alpha)}\sqrt{\sigma}}(x,\omega)}|\theta_{\nu,\sigma}(D)f(y)|^{2}
\ud y\ud \nu\frac{\ud \sigma}{\sigma}\bigg)^{\frac{1}{2}}.
\end{align*}
We can then conclude the proof using Lemma \ref{lem5}, with $F(y,\nu,\sigma)=\theta_{\nu,\sigma}(D)f(y)$:
\begin{align*}
&\|\mathcal{G}_{\alpha}^{*}(f)\|_{L^{1}(\Sp)}\\
&\lesssim \|S(f)\|_{L^{1}(\Sp)}+\sum_{k=1}^{\infty}2^{-k/2+k/(2\alpha)}\int_{\Sp}\bigg(\int_{0}^{1}\fint_{B_{2^{k/(2n\alpha)}\sqrt{\sigma}}(x,\omega)}|\theta_{\nu,\sigma}(D)f(y)|^{2}
\ud y\ud \nu\frac{\ud \sigma}{\sigma}\bigg)^{1/2}\ud x\ud \omega\\
&\lesssim \|S(f)\|_{L^{1}(\Sp)}+\sum_{k=1}^{\infty}2^{-k/2+k/\alpha}\|S(f)\|_{L^{1}(\Sp)}\lesssim \|S(f)\|_{L^{1}(\Sp)},
\end{align*}
where for the final inequality we used that $\alpha>2$.
\end{proof}

\section{Applications}\label{sec:applications}

In this section we give two applications of the results in the previous sections. The aim here is to show how the characterizations in this article can be used to incorporate techniques from other parts of harmonic analyis, and to demonstrate that the characterizations are amenable to direct calculations. Other applications of these characterizations, to operators with rough coefficients, will follow in future work.

We first prove that a large class of singular integral operators which are bounded on $L^{p}(\Rn)$ for $1<p<\infty$ are also bounded on $\Hp$ for all $1\leq p\leq\infty$. Recall the definition of the local Hardy space $\HT^{1}(\Rn)$ from \eqref{eq:localhardy}.

\begin{theorem}\label{thm:czo}
Let $m\in L^{\infty}(\Rn)$ be such that $m(D):\HT^{1}(\Rn)\to L^{1}(\Rn)$ is bounded. Then $m(D):\Hp\to\Hp$ is bounded for all $p\in[1,\infty]$.
\end{theorem}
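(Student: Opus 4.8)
The plan is to reduce the statement to the parabolic frequency localization characterizations of $\mathcal{H}^{p}_{FIO}(\mathbb{R}^{n})$ together with classical mapping properties of the Fourier multiplier $m(D)$. The structural point that makes this work is that $m(D)$ commutes with $q(D)$ and with every $\varphi_{\omega}(D)$, so the problem decouples into a low-frequency piece $q(D)m(D)f=m(D)q(D)f$ and high-frequency pieces $\varphi_{\omega}(D)m(D)f=m(D)\varphi_{\omega}(D)f$.

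The first step is to extract from the hypothesis the boundedness of $m(D)$ on classical spaces. Since $H^{1}(\mathbb{R}^{n})\subseteq\mathcal{H}^{1}(\mathbb{R}^{n})$ continuously, $m(D)\colon H^{1}(\mathbb{R}^{n})\to L^{1}(\mathbb{R}^{n})$ is bounded; as $m(D)$ commutes with the Riesz transforms $R_{j}$, which are bounded on $H^{1}(\mathbb{R}^{n})$, and $m\in L^{\infty}(\mathbb{R}^{n})$ gives boundedness on $L^{2}(\mathbb{R}^{n})$, the Riesz transform characterization $\|g\|_{H^{1}(\mathbb{R}^{n})}\eqsim\|g\|_{L^{1}(\mathbb{R}^{n})}+\sum_{j}\|R_{j}g\|_{L^{1}(\mathbb{R}^{n})}$ shows that $m(D)\colon H^{1}(\mathbb{R}^{n})\to H^{1}(\mathbb{R}^{n})$ is bounded; complex interpolation with the $L^{2}$-bound then gives $m(D)\colon L^{p}(\mathbb{R}^{n})\to L^{p}(\mathbb{R}^{n})$ for $1<p\le 2$. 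Next I would observe that the class of symbols satisfying the hypothesis is stable under $m\mapsto\overline{m}$ and $m\mapsto\check m$, where $\check m(\xi):=m(-\xi)$: indeed $\overline{m}(D)$ and $\check m(D)$ arise from $m(D)$ by conjugating with complex conjugation and with the reflection $x\mapsto -x$, which are isometries of both $\mathcal{H}^{1}(\mathbb{R}^{n})$ and $L^{1}(\mathbb{R}^{n})$. Applying the previous step to $\overline{m}$ and dualising, $m(D)=(\overline{m}(D))^{*}$ is bounded on $L^{p}(\mathbb{R}^{n})$ for $2\le p<\infty$ as well, hence for all $1<p<\infty$.

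Granting this, the range $1<p<\infty$ is immediate from \eqref{eq:characterizationp}: applying $m(D)\colon L^{p}(\mathbb{R}^{n})\to L^{p}(\mathbb{R}^{n})$ to each $\varphi_{\omega}(D)f$ and to $q(D)f$ yields $\bigl(\int_{S^{n-1}}\|\varphi_{\omega}(D)m(D)f\|_{L^{p}(\mathbb{R}^{n})}^{p}\,\ud\omega\bigr)^{1/p}+\|q(D)m(D)f\|_{L^{p}(\mathbb{R}^{n})}\lesssim\|f\|_{\mathcal{H}^{p}_{FIO}(\mathbb{R}^{n})}$, and in particular $\varphi_{\omega}(D)m(D)f\in L^{p}(\mathbb{R}^{n})$ for a.e.\ $\omega$ and $q(D)m(D)f\in L^{p}(\mathbb{R}^{n})$. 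For $p=\infty$ I would use $\mathcal{H}^{\infty}_{FIO}(\mathbb{R}^{n})=(\mathcal{H}^{1}_{FIO}(\mathbb{R}^{n}))^{*}$ from \eqref{eq:Hinfty}: the transpose of $m(D)$ with respect to the bilinear pairing $\langle\cdot,\cdot\rangle_{\mathbb{R}^{n}}$ is $\check m(D)$, which satisfies the hypothesis, so once the case $p=1$ is known $\check m(D)$ is bounded on $\mathcal{H}^{1}_{FIO}(\mathbb{R}^{n})$, and therefore $m(D)$ is bounded on $\mathcal{H}^{\infty}_{FIO}(\mathbb{R}^{n})$.

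The case $p=1$ is where the work lies, and I expect it to be the main obstacle: the hypothesis only supplies $m(D)\colon\mathcal{H}^{1}(\mathbb{R}^{n})\to L^{1}(\mathbb{R}^{n})$, whereas the high-frequency pieces have to be mapped back into $H^{1}(\mathbb{R}^{n})$, which is why both $m\in L^{\infty}(\mathbb{R}^{n})$ and the Riesz transform structure of $H^{1}(\mathbb{R}^{n})$ are used above. Using the characterization from Remark \ref{rem:pinfty} (that is, Proposition \ref{prop:equivalent} combined with Theorem \ref{pro1}), it suffices to bound $\int_{S^{n-1}}\|\varphi_{\omega}(D)m(D)f\|_{H^{1}(\mathbb{R}^{n})}\,\ud\omega$ and $\|q(D)m(D)f\|_{L^{1}(\mathbb{R}^{n})}$ by $\|f\|_{\mathcal{H}^{1}_{FIO}(\mathbb{R}^{n})}$. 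For the high-frequency pieces one has $\varphi_{\omega}(D)f\in H^{1}(\mathbb{R}^{n})$ with $\int_{S^{n-1}}\|\varphi_{\omega}(D)f\|_{H^{1}(\mathbb{R}^{n})}\,\ud\omega\lesssim\|f\|_{\mathcal{H}^{1}_{FIO}(\mathbb{R}^{n})}$, and the boundedness of $m(D)$ on $H^{1}(\mathbb{R}^{n})$ gives $\|\varphi_{\omega}(D)m(D)f\|_{H^{1}(\mathbb{R}^{n})}=\|m(D)\varphi_{\omega}(D)f\|_{H^{1}(\mathbb{R}^{n})}\lesssim\|\varphi_{\omega}(D)f\|_{H^{1}(\mathbb{R}^{n})}$, so integrating in $\omega$ closes this term. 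For the low-frequency piece, $q(D)m(D)f=m(D)q(D)f$ and $q(D)f\in L^{1}(\mathbb{R}^{n})$ has Fourier transform supported in the fixed compact set $\mathrm{supp}(q)$; since a band-limited $L^{1}$-function lies in $\mathcal{H}^{1}(\mathbb{R}^{n})$ with $\|q(D)f\|_{\mathcal{H}^{1}(\mathbb{R}^{n})}\lesssim\|q(D)f\|_{L^{1}(\mathbb{R}^{n})}$ (its local vertical maximal function is pointwise dominated by the convolution of $|q(D)f|$ with a fixed Schwartz function, uniformly in scale $<1$), the hypothesis yields $\|q(D)m(D)f\|_{L^{1}(\mathbb{R}^{n})}\lesssim\|q(D)f\|_{\mathcal{H}^{1}(\mathbb{R}^{n})}\lesssim\|q(D)f\|_{L^{1}(\mathbb{R}^{n})}\le\|f\|_{\mathcal{H}^{1}_{FIO}(\mathbb{R}^{n})}$. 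This establishes the case $p=1$, and together with the preceding paragraphs completes the proof.
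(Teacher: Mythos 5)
Your proof is correct and follows essentially the same strategy as the paper's. The core of the argument — deducing $m(D):H^1(\Rn)\to H^1(\Rn)$ from the hypothesis via the Riesz transform characterization and then, for $p=1$, passing through the characterization in Proposition~\ref{prop:equivalent} and Theorem~\ref{pro1} to handle the high-frequency pieces $\varphi_\omega(D)$ and the low-frequency piece $q(D)$ separately — is exactly the paper's argument. The $p=\infty$ case is the same duality argument; you use the bilinear transpose $\check m(D)$, the paper uses the sesquilinear adjoint $m(D)^*$ composed with the isometries of conjugation and reflection, and the two formulations are equivalent. The one genuine (if minor) divergence is the range $1<p<\infty$: the paper simply appeals to complex interpolation of the scale $\HT^p_{FIO}(\Rn)$ between the $p=1$ and $p=\infty$ endpoints, whereas you establish $m(D):L^p(\Rn)\to L^p(\Rn)$ first (via interpolation of $H^1\to H^1$ with $L^2\to L^2$ and duality) and then invoke the $L^p$-characterization \eqref{eq:characterizationp} from~\cite{Rozendaal19}. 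Both routes are valid; the paper's is shorter since it avoids the intermediate $L^p$-multiplier step, while yours has the pedagogical advantage of making the $1<p<\infty$ case parallel to $p=1$. Your low-frequency estimate $\|q(D)f\|_{\HT^1(\Rn)}\lesssim\|q(D)f\|_{L^1(\Rn)}$ for band-limited $L^1$ functions is correct; the paper obtains the same conclusion more directly by choosing the low-frequency cutoff $r$ in the definition~\eqref{eq:localhardy} of $\HT^1(\Rn)$ so that $r\equiv 1$ on $\supp(q)$, which forces $(1-r)(D)q(D)f=0$.
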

\begin{proof}
We first consider the case where $p=1$. It follows from the inclusion $H^{1}(\Rn)\subseteq L^{1}(\Rn)$ that $H^{1}(\Rn)\subseteq\HT^{1}(\Rn)$, and therefore $m(D):H^{1}(\Rn)\to L^{1}(\Rn)$ is bounded. Now, for $j\in\{1,\ldots, n\}$, let $R_{j}(D)$, where $R_{j}(\xi):=-i\xi_{j}/|\xi|$ for $\xi=(\xi_{1},\ldots, \xi_{n})\in\Rn\setminus \{0\}$, be the $j$-th Riesz transform. Then the Riesz transform characterization of $H^{1}(\Rn)$ (see \cite[Section III.4.3]{Stein93}) shows that $m(D):H^{1}(\Rn)\to H^{1}(\Rn)$ with
\begin{align*}
\|m(D)f\|_{H^{1}(\Rn)}&\eqsim \|m(D)f\|_{L^{1}(\Rn)}+\sum_{j=1}^{n}\|R_{j}(D)m(D)f\|_{L^{1}(\Rn)}\lesssim \|f\|_{H^{1}(\Rn)}+\sum_{j=1}^{n}\|m(D)R_{j}(D)f\|_{L^{1}(\Rn)}\\
&\lesssim \|f\|_{H^{1}(\Rn)}+\sum_{j=1}^{n}\|R_{j}(D)f\|_{H^{1}(\Rn)}\lesssim \|f\|_{H^{1}(\Rn)}
\end{align*}
for all $f\in H^{1}(\Rn)$, where we also used that the Riesz transforms are bounded on $H^{1}(\Rn)$.

Now let $f\in\HT^{1}_{FIO}(\Rn)$. By Proposition \ref{prop:equivalent} and Theorem \ref{pro1}, it suffices to show that $q(D)m(D)f\in L^{1}(\Rn)$, $\ph_{\w}(D)m(D)f\in H^{1}(\Rn)$ for almost all $\w\in S^{n-1}$, and
\[
\int_{S^{n-1}}\|\varphi_{\omega}(D)m(D)f\|_{H^{1}(\mathbb{R}^{n})}\,\ud \omega+\|q(D)m(D)f\|_{L^{1}(\Rn)}\lesssim \|f\|_{\HT^{1}_{FIO}(\Rn)}.
\]
But this follows from the boundedness of $m(D)$ on $H^{1}(\Rn)$ and from $\HT^{1}(\Rn)$ to $L^{1}(\Rn)$, if one takes $r\in C^{\infty}_{c}(\Rn)$ in the definition of $\HT^{1}(\Rn)$ such that $r\equiv 1$ on $\supp(q)$:
\begin{align*}
&\int_{S^{n-1}}\|\varphi_{\omega}(D)m(D)f\|_{H^{1}(\mathbb{R}^{n})}\,\ud \omega+\|q(D)m(D)f\|_{L^{1}(\Rn)}\\
&=\int_{S^{n-1}}\|m(D)\varphi_{\omega}(D)f\|_{H^{1}(\mathbb{R}^{n})}\,\ud \omega+\|m(D)q(D)f\|_{L^{1}(\Rn)}\lesssim \int_{S^{n-1}}\|\varphi_{\omega}(D)f\|_{H^{1}(\mathbb{R}^{n})}\,\ud \omega+\|q(D)f\|_{\HT^{1}(\Rn)}\\
&=\int_{S^{n-1}}\|\varphi_{\omega}(D)f\|_{H^{1}(\mathbb{R}^{n})}\,\ud \omega+\|q(D)f\|_{L^{1}(\Rn)}\eqsim \|f\|_{\HT^{1}_{FIO}(\Rn)},
\end{align*}
where we again used Proposition \ref{prop:equivalent} and Theorem \ref{pro1} for the final equivalence.

Next, we consider the case $p=\infty$. It is straightforward to check that
\[
\overline{m(D)^{*}g(x)}=\overline{\overline{m}(D)g(x)}=m(D)\tilde{g}(-x)
\]
for all $g\in\Sw(\Rn)$  and $x\in\Rn$, where $\tilde{g}(y):=\overline{g(-y)}$ for $y\in\Rn$. Hence
\[
\|m(D)^{*}g\|_{L^{1}(\Rn)}=\big\|\overline{m(D)^{*}g}\big\|_{L^{1}(\Rn)}=\|m(D)\tilde{g}\|_{L^{1}(\Rn)}\lesssim \|\tilde{g}\|_{\HT^{1}(\Rn)}=\|g\|_{\HT^{1}(\Rn)},
\]
so that $m(D)^{*}:\HT^{1}(\Rn)\to L^{1}(\Rn)$. It now follows from what we have shown for $p=1$ that $m(D)^{*}:\HT^{1}_{FIO}(\Rn)\to \HT^{1}_{FIO}(\Rn)$ is continuous. Hence \eqref{eq:Hinfty} and the density of $\Sw(\Rn)$ in $\HT^{1}_{FIO}(\Rn)$ (see \cite[Proposition 6.6]{HaPoRo20}) imply that for all $f\in\HT^{\infty}_{FIO}(\Rn)$ one has
\begin{align*}
\|m(D)f\|_{\HT^{\infty}_{FIO}(\Rn)}&\eqsim \sup|\lb m(D)f,g\rb_{\Rn}|=\sup |\lb f,m(D)^{*}g\rb_{\Rn}|\lesssim \sup \|f\|_{\HT^{\infty}_{FIO}(\Rn)}\|m(D)^{*}g\|_{\HT^{1}_{FIO}(\Rn)}\\
&\lesssim \sup \|f\|_{\HT^{\infty}_{FIO}(\Rn)}\|g\|_{\HT^{1}_{FIO}(\Rn)}=\|f\|_{\HT^{\infty}_{FIO}(\Rn)},
\end{align*}
where the supremum is taken over all $g\in\Sw(\Rn)$ such that $\|g\|_{\HT^{1}_{FIO}(\Rn)}\leq 1$. This proves the required statement for $p=\infty$.

Finally, for $1<p<\infty$ one can use complex interpolation, by \cite[Proposition 6.7]{HaPoRo20}.
\end{proof}

\begin{remark}\label{rem:boundedness}
For sufficiently smooth $m$ the conclusion of Theorem \ref{thm:czo} was already obtained in \cite[Theorem 6.10]{HaPoRo20}. This is the case, for example, if $m\in C^{\infty}(\Rn)$ satisfies standard symbol estimates of the form
\begin{equation}\label{eq:S0}
|\partial^{\alpha}_{\xi}m(\xi)|\leq C_{\alpha} \lb \xi\rb^{-|\alpha|}\quad(\xi\in\Rn)
\end{equation}
for all $\alpha\in\Z_{+}^{n}$, and such estimates hold e.g.~for the local Riesz transforms. However, the techniques used to prove \cite[Theorem 6.10]{HaPoRo20} involve repeated integration by parts and require more regularity than the Mikhlin multiplier theorem. Hence Theorem \ref{thm:czo} allows one to incorporate results from other parts of harmonic analysis that are not accessible without the characterizations in this article.
\end{remark}

In \cite[Theorem 6.10]{HaPoRo20} it is shown that $m(D):\Hp\to\Hp$ for all $1\leq p\leq\infty$ if $m\in C^{\infty}(\Rn)$ is such that for all $\alpha\in\Z_{+}^{n}$ and $\beta\in\Z_{+}$ there exists a $C_{\alpha,\beta}\geq0$ with
\[
|\lb \hat{\xi},\nabla_{\xi}\rb^{\beta}\partial_{\xi}^{\alpha}m(\xi)|\leq C_{\alpha,\beta}\lb \xi\rb^{-\frac{|\alpha|}{2}-\beta}\quad(\xi\in\Rn\setminus\{0\}).
\]
It is not clear whether one also has $m(D):\Hp\to\Hp$ for some $p\neq 2$ under the weaker assumption that $m\in S^{0}_{1/2}(\Rn)$. Here $S^{\gamma}_{1/2}(\Rn)$, for $\gamma\in\R$, consists of all $m\in C^{\infty}(\Rn)$ such that for all $\alpha\in\Z_{+}^{n}$ there exists a $C_{\alpha}\geq0$ with
\[
|\partial_{\xi}^{\alpha}m(\xi)|\leq C_{\alpha,\beta}\lb \xi\rb^{\gamma-\frac{|\alpha|}{2}}\quad(\xi\in\Rn).
\]
Using the alternative characterizations of $\Hp$ we can easily obtain a slightly weaker result.

\begin{corollary}\label{cor:Shalf}
Let $\gamma\in[0,n/4]$. Then each $m\in S^{-\gamma}_{1/2}(\Rn)$ satisfies $m(D):\Hp\to\Hp$ for all $p\in[1,\infty]$ with $\big|\frac{1}{2}-\frac{1}{p}\big|\leq 2\gamma/n$.
\end{corollary}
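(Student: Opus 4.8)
The plan is to deduce the statement from classical mapping properties of Fourier multipliers of type $(1/2,1/2)$, fed into the characterization \eqref{eq:characterizationp} for $1<p<\infty$ and into Theorem~\ref{thm:czo} at the endpoints $p\in\{1,\infty\}$. Note first that $|m(\xi)|\leq C_0\langle\xi\rangle^{-\gamma}\leq C_0$, so $m(D)$ is well defined on $\Sw'(\Rn)$ and commutes with $q(D)$ and with each $\ph_\w(D)$. Moreover, the hypothesis $|\tfrac12-\tfrac1p|\leq 2\gamma/n$ says exactly that the order $-\gamma$ of $m$ is at most the critical order $-n(1-\tfrac12)|\tfrac12-\tfrac1p|$ for $L^p$-boundedness of pseudodifferential operators of type $(1/2,1/2)$.

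First I would treat $1<p<\infty$. By Fefferman's $L^p$-boundedness theorem for pseudodifferential operators of type $(1/2,1/2)$, combined with the order comparison just noted (the inequality on the order being non-strict, so the boundary case $|\tfrac12-\tfrac1p|=2\gamma/n$ is included), the operator $m(D)$ is bounded on $L^p(\Rn)$. Now let $f\in\Hp$. Since $q(D)f\in L^p(\Rn)$ and $\ph_\w(D)f\in L^p(\Rn)$ for a.e.\ $\w\in S^{n-1}$, boundedness of $m(D)$ on $L^p(\Rn)$ shows that $m(D)f$ satisfies the hypotheses of the characterization \eqref{eq:characterizationp} from \cite{Rozendaal19}, and commuting $m(D)$ past $q(D)$ and $\ph_\w(D)$ yields
\[
\|m(D)f\|_{\Hp}\eqsim\Big(\int_{S^{n-1}}\|m(D)\ph_\w(D)f\|_{L^p(\Rn)}^p\,\ud\w\Big)^{1/p}+\|m(D)q(D)f\|_{L^p(\Rn)}\lesssim\|f\|_{\Hp},
\]
where in the last step we used $\|m(D)g\|_{L^p}\leq\|m(D)\|_{L^p\to L^p}\|g\|_{L^p}$ together with \eqref{eq:characterizationp}. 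This settles all admissible $p\in(1,\infty)$.

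It remains to handle $p\in\{1,\infty\}$. Here the constraint $|\tfrac12-\tfrac1p|\leq 2\gamma/n$ forces $\gamma\geq n/4$, so in fact $\gamma=n/4$ and $m\in S^{-n/4}_{1/2}(\Rn)$. By Theorem~\ref{thm:czo} it suffices to show that $m(D):\HT^1(\Rn)\to L^1(\Rn)$ is bounded, where $\HT^1(\Rn)$ is the local Hardy space of \eqref{eq:localhardy}; this then gives $m(D):\Hp\to\Hp$ for all $p\in[1,\infty]$ at once. Fix $\tilde r\in C^\infty_c(\Rn)$ with $\tilde r\equiv1$ on $\{|\xi|\leq\tfrac12\}$ and $\supp(\tilde r)\subseteq\{|\xi|\leq1\}$, and write $m=m\tilde r+m(1-\tilde r)$. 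The symbol $m\tilde r$ lies in $C^\infty_c(\Rn)$, so $(m\tilde r)(D)$ has a Schwartz convolution kernel and is bounded on $L^1(\Rn)$; since $\HT^1(\Rn)\subseteq L^1(\Rn)$, it maps $\HT^1(\Rn)\to L^1(\Rn)$. The symbol $m(1-\tilde r)$ again belongs to $S^{-n/4}_{1/2}(\Rn)$ by the Leibniz rule and vanishes for $|\xi|<\tfrac12$, so $m(1-\tilde r)=(m(1-\tilde r))(1-\tilde r)$ and hence $(m(1-\tilde r))(D)f=(m(1-\tilde r))(D)(1-\tilde r)(D)f$ for every $f$. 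For $f\in\HT^1(\Rn)$ one has $(1-\tilde r)(D)f\in H^1(\Rn)$ by \eqref{eq:localhardy}, and the classical sharp endpoint estimate for type-$(1/2,1/2)$ symbols — that a symbol in $S^{-n/4}_{1/2}(\Rn)$ maps $H^1(\Rn)$ boundedly into $L^1(\Rn)$ — gives $\|(m(1-\tilde r))(D)f\|_{L^1(\Rn)}\lesssim\|(1-\tilde r)(D)f\|_{H^1(\Rn)}\lesssim\|f\|_{\HT^1(\Rn)}$. Adding the two contributions yields $m(D):\HT^1(\Rn)\to L^1(\Rn)$, and Theorem~\ref{thm:czo} completes the proof.

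I expect the main difficulty to lie not in the assembly above, which is short, but in locating and stating precisely the two classical inputs with the sharp order: the $L^p$-boundedness of type-$(1/2,1/2)$ operators of order $-\tfrac{n}{2}|\tfrac12-\tfrac1p|$ for $1<p<\infty$, and the endpoint mapping $S^{-n/4}_{1/2}(\Rn):H^1(\Rn)\to L^1(\Rn)$. One should check that the boundary case $|\tfrac12-\tfrac1p|=2\gamma/n$ is genuinely covered and that the low-frequency truncations leave one in the correct symbol class; everything else is immediate from \eqref{eq:characterizationp} and Theorem~\ref{thm:czo}.
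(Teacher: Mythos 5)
Your argument follows essentially the alternative route that the paper itself sketches at the end of its proof: for $1<p<\infty$ combine the characterization \eqref{eq:characterizationp} with the classical $L^{p}$-bounds for symbols of type $(1/2,1/2)$ at the critical order $-\tfrac{n}{2}\big|\tfrac12-\tfrac1p\big|$, and for $p\in\{1,\infty\}$ (which forces $\gamma=n/4$) reduce to $m(D):\HT^{1}(\Rn)\to L^{1}(\Rn)$ and invoke Theorem~\ref{thm:czo}. The paper's primary route for $0<\gamma<n/4$ is via Stein interpolation between $\gamma=0,\ p=2$ and $\gamma=n/4,\ p\in\{1,\infty\}$; your version avoids interpolation entirely by feeding the sharp $L^{p}$ multiplier theorem directly into \eqref{eq:characterizationp}, which is a clean simplification. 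You also supply the low/high-frequency reduction from $\HT^{1}(\Rn)$ to $H^{1}(\Rn)$ in the endpoint step, which the paper simply absorbs into the citation of Stein.

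There is, however, one genuine slip in the endpoint argument. The identity
\[
m(1-\tilde r)=(m(1-\tilde r))(1-\tilde r)
\]
is false: it would require $(1-\tilde r)^{2}=1-\tilde r$, i.e.\ $\tilde r$ to be $\{0,1\}$-valued, which is impossible for a nonzero $C^{\infty}_{c}$ function. The intended mechanism is sound, but the cutoff must be genuinely nested: either introduce a second function $\tilde r'\in C^{\infty}_{c}(\Rn)$ with $\tilde r'\equiv 1$ near the origin and $\supp(\tilde r')\subseteq\{|\xi|<\tfrac12\}$, so that $m(1-\tilde r)=(m(1-\tilde r))(1-\tilde r')$ does hold because $\supp(m(1-\tilde r))$ and $\supp(\tilde r')$ are disjoint, and then use the independence of the $\HT^{1}$-norm from the choice of truncation to get $(1-\tilde r')(D)f\in H^{1}(\Rn)$; or, simpler still, write $(m(1-\tilde r))(D)f=m(D)\big[(1-\tilde r)(D)f\big]$ directly and apply the $H^{1}\to L^{1}$ bound to $m(D)$ itself (so the Leibniz-rule computation showing $m(1-\tilde r)\in S^{-n/4}_{1/2}(\Rn)$ becomes unnecessary). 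With this correction the endpoint step, and hence the whole proof, is sound.
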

\begin{proof}
For $\gamma=n/4$ one has $m(D):\HT^{1}(\Rn)\to L^{1}(\Rn)$ by \cite[Section VII.5.12]{Stein93}, and then Theorem \ref{thm:czo} concludes the proof. For $\gamma=0$ the result follows from Plancherel's theorem, given that $\HT^{2}_{FIO}(\Rn)=L^{2}(\Rn)$. Stein interpolation then yields the required result for $0<\gamma<n/4$. Alternatively, for $0< \gamma<n/4$ one can directly combine the characterization of $\Hp$ from \eqref{eq:characterizationp} with $L^{p}(\Rn)$-bounds for $m(D)$ from \cite[Section VII.5.12]{Stein93}.
\end{proof}

Next, we determine in a relatively explicit manner the $\Hp$ norm, for $1\leq p\leq \infty$, of functions with frequency support in one of the dyadic-parabolic regions in \eqref{eq:dyadicpar}. For simplicity of notation we write $\HT^{p}(\Rn)=L^{p}(\Rn)$ for $1<p<\infty$, and $\HT^{\infty}(\Rn)=\text{bmo}(\Rn)=(\HT^{1}(\Rn))^{*}$.

\begin{proposition}\label{prop:examples}
Let $p\in[1,\infty]$ and set $s_{p}:=\frac{n-1}{2}\big|\frac{1}{2}-\frac{1}{p}\big|$. Then for each $A>1$ there exists a $C>0$ such that the following statements hold for all $f\in\HT^{p}_{FIO}(\Rn)$. Suppose that there exist $\tau>0$ and $\nu\in S^{n-1}$ with
\begin{equation}\label{eq:supportwp}
\supp(\F(f))\subseteq\{\xi\in\Rn\mid |\xi|\in [\tau^{-1}/A,A\tau^{-1}], |\hat{\xi}-\nu|\leq A\sqrt{\tau}\}.
\end{equation}
Then the following assertions hold.
\begin{enumerate}
\item\label{it:sobeq1}
If $p\leq 2$, then
\begin{equation}\label{eq:sobeq1}
\frac{1}{C}\|\lb D\rb^{-s_{p}}f\|_{\HT^{p}(\Rn)} \leq \|f\|_{\HT^{p}_{FIO}(\Rn)}\leq C \|\lb D\rb^{-s_{p}}f\|_{\HT^{p}(\Rn)}.
\end{equation}
Hence there does not exist an $s<s_{p}$ such that $\lb D\rb^{-s}:\Hp\to \HT^{p}(\Rn)$ is bounded.
\item\label{it:sobeq2}
If $p>2$, then
\begin{equation}\label{eq:sobeq2}
\frac{1}{C}\|\lb D\rb^{s_{p}}f\|_{\HT^{p}(\Rn)} \leq \|f\|_{\HT^{p}_{FIO}(\Rn)}\leq C \|\lb D\rb^{s_{p}}f\|_{\HT^{p}(\Rn)}.
\end{equation}
Hence there does not exist an $s<s_{p}$ such that $\lb D\rb^{-s}:\HT^{p}(\Rn)\to\Hp$ is bounded.
\end{enumerate}
\end{proposition}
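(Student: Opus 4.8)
The plan is to run everything through the description of $\HT^{p}_{FIO}(\Rn)$ in terms of the parabolic frequency localisations $\ph_{\w}(D)$: for $1<p<\infty$ this is \eqref{eq:characterizationp}, for $p=1$ it is the combination of Proposition \ref{prop:equivalent} and Theorem \ref{pro1}, and for $p=\infty$ the corresponding one-sided statement together with \eqref{eq:Hinfty}. Several reductions come first. If $\tau\gtrsim1$, then \eqref{eq:supportwp} forces $\F f$ to be supported in a fixed bounded set, on which $\lb\xi\rb^{\pm s_{p}}\eqsim1$, so the claim reduces to $\|f\|_{\HT^{p}_{FIO}(\Rn)}\eqsim\|f\|_{\HT^{p}(\Rn)}$ for such $f$, which follows directly from the characterisations above (together with the reproducing formula in the next paragraph); so we may assume $0<\tau<1$. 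Since $s_{p'}=s_{p}$ and $\HT^{p'}_{FIO}(\Rn)^{*}=\HT^{p}_{FIO}(\Rn)$ for $1\le p'<2$ (using \eqref{eq:Hinfty} when $p'=1$), part \eqref{it:sobeq2} will follow from part \eqref{it:sobeq1} by a duality argument, and for $p=2$ both parts are trivial because $\HT^{2}_{FIO}(\Rn)=L^{2}(\Rn)$ and $s_{2}=0$. Thus the heart of the matter is \eqref{it:sobeq1} for $1\le p<2$.

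The first step is to observe that $\ph_{\w}(D)f=0$ unless $|\w-\nu|\lesssim\sqrt{\tau}$: from its definition $\ph_{\w}(\xi)$ is nonzero only when $|\hat\xi-\w|\lesssim\sqrt{\sigma}$ with $|\xi|\eqsim\sigma^{-1}$, hence only when $|\hat\xi-\w|\lesssim\sqrt{\tau}$ on $\supp(\F f)$, which with $|\hat\xi-\nu|\le A\sqrt{\tau}$ gives the claim. Choosing $\tau$ small so that $\supp(\F f)$ misses $\supp(q)$ (the intermediate range of $\tau$ being again of bounded-frequency type), the characterisations yield $\|f\|_{\HT^{p}_{FIO}(\Rn)}\eqsim\big(\int_{\{|\w-\nu|\lesssim\sqrt{\tau}\}}\|\ph_{\w}(D)f\|_{\HT^{p}(\Rn)}^{p}\,\ud\w\big)^{1/p}$ for $1\le p<\infty$, where for $p=1$ one first replaces $H^{1}(\Rn)$ by $L^{1}(\Rn)$, using that on functions with Fourier support in $\{|\xi|\eqsim\tau^{-1}\}$ the norms of $H^{1}(\Rn)$, $\HT^{1}(\Rn)$ and $L^{1}(\Rn)$ are equivalent with constants uniform in $\tau$ (by an isotropic rescaling, under which such a set becomes a fixed annulus away from the origin). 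Next I would record two multiplier estimates, valid for every $g$ with $\F g$ supported in the region \eqref{eq:supportwp} and every $p$: first $\|\ph_{\w}(D)g\|_{\HT^{p}(\Rn)}\lesssim\tau^{-(n-1)/4}\|g\|_{\HT^{p}(\Rn)}$, since on $\supp(\F g)$ the symbol $\ph_{\w}$ agrees with $\zeta(\tau\,\cdot)\ph_{\w}$ for a suitable $\zeta\in C_{c}^{\infty}(\Rn)$ with $\zeta\equiv1$ on $[A^{-1},A]$, and this function has the support and derivative bounds of the wave packets in Lemma \ref{WP}, so by \eqref{fourier} its inverse Fourier transform has $L^{1}$-norm $\lesssim\tau^{-(n-1)/4}$, after which Young's inequality applies; and second the reproducing formula $g=\int_{S^{n-1}}\Lambda_{\w}(D)\ph_{\w}(D)g\,\ud\w$ with $\Lambda_{\w}(D):=\big(\int_{S^{n-1}}\ph_{\mu}(\cdot)^{2}\ud\mu\big)^{-1}(D)\ph_{\w}(D)$, which holds because $\big(\int_{S^{n-1}}\ph_{\mu}(\xi)^{2}\ud\mu\big)^{-1}\int_{S^{n-1}}\ph_{\w}(\xi)^{2}\ud\w=1$ on $\supp(\F g)$ and, there, $\big(\int_{S^{n-1}}\ph_{\mu}(\cdot)^{2}\ud\mu\big)^{-1}$ is a radial symbol that is $\eqsim1$ with uniform Mikhlin bounds (cf.\ \eqref{eq:phinu1}); consequently $\|\Lambda_{\w}(D)g\|_{\HT^{p}(\Rn)}\lesssim\tau^{-(n-1)/4}\|g\|_{\HT^{p}(\Rn)}$ as well.

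The two halves of \eqref{it:sobeq1} then follow by bookkeeping. Since the cap $\{|\w-\nu|\lesssim\sqrt{\tau}\}$ has measure $\eqsim\tau^{(n-1)/2}$, for the upper bound $\|f\|_{\HT^{p}_{FIO}(\Rn)}\lesssim\tau^{(n-1)/(2p)}\sup_{\w}\|\ph_{\w}(D)f\|_{\HT^{p}(\Rn)}\lesssim\tau^{\frac{n-1}{2}(\frac1p-\frac12)}\|f\|_{\HT^{p}(\Rn)}=\tau^{s_{p}}\|f\|_{\HT^{p}(\Rn)}\eqsim\|\lb D\rb^{-s_{p}}f\|_{\HT^{p}(\Rn)}$, the last step because $\lb\xi\rb^{-s_{p}}\eqsim\tau^{s_{p}}$ with uniform Mikhlin bounds on $\supp(\F f)$; for the lower bound, Minkowski's inequality applied to the reproducing formula and then H\"older on the cap give $\|f\|_{\HT^{p}(\Rn)}\lesssim\tau^{-(n-1)/4}\tau^{\frac{n-1}{2}(1-\frac1p)}\big(\int_{\{|\w-\nu|\lesssim\sqrt{\tau}\}}\|\ph_{\w}(D)f\|_{\HT^{p}(\Rn)}^{p}\ud\w\big)^{1/p}=\tau^{-s_{p}}\|f\|_{\HT^{p}_{FIO}(\Rn)}$. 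Part \eqref{it:sobeq2} then follows by testing $f$ against $g\in\Sw(\Rn)$ with $\|g\|_{\HT^{p'}_{FIO}(\Rn)}\le1$, writing $\lb f,g\rb=\lb f,\widetilde{\ph}(D)g\rb$ for a fixed fattened wave packet multiplier $\widetilde{\ph}$ equal to $1$ on $\supp(\F f)$ and bounded on $\HT^{p'}_{FIO}(\Rn)$ (by Corollary \ref{off} and the boundedness arguments of \cite{HaPoRo18}), and applying part \eqref{it:sobeq1} to $\widetilde{\ph}(D)g$, whose Fourier support lies in a slightly enlarged region of the form \eqref{eq:supportwp}, together with $s_{p'}=s_{p}$. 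The sharpness statements are then immediate corollaries: if $\lb D\rb^{-s}:\HT^{p}_{FIO}(\Rn)\to\HT^{p}(\Rn)$ (for $p\le2$), respectively $\lb D\rb^{-s}:\HT^{p}(\Rn)\to\HT^{p}_{FIO}(\Rn)$ (for $p>2$), were bounded for some $s<s_{p}$, then testing on a fixed nonzero wave packet rescaled by a parabolic dilation to scale $\tau$ and letting $\tau\to0$ would, via \eqref{eq:sobeq1}, respectively \eqref{eq:sobeq2}, force $\tau^{s}\lesssim\tau^{s_{p}}$, respectively $\tau^{-s}\gtrsim\tau^{-s_{p}}$, for all small $\tau>0$, which is impossible.

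The step I expect to be the main obstacle is the endpoint $p=1$: a normalised bump adapted to a dyadic-parabolic box is \emph{not} a uniformly bounded multiplier on the local Hardy space in the naive sense, so the multiplier bounds above genuinely rely on the functions being frequency-localised to a single parabolic region, where the local Hardy, Hardy and $L^{1}$ norms coincide up to $\tau$-uniform constants; granting that, the kernel estimate \eqref{fourier} does the rest. The bounded-frequency regime $\tau\gtrsim1$ and the passage to $p>2$ by duality should be essentially routine, modulo invoking the known $\HT^{p}_{FIO}(\Rn)$-boundedness of fattened wave packet multipliers.
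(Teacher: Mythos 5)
Your argument is correct, and it reaches \eqref{eq:sobeq1} by a genuinely different route than the paper, which is worth comparing. The paper does not attempt to re-prove both directions of \eqref{eq:sobeq1} from scratch: it simply cites the Sobolev embedding $\Hp\subseteq W^{-s_p,p}(\Rn)$ from \cite[Theorem 7.4]{HaPoRo18} for the lower bound $\|\lb D\rb^{-s_p}g\|_{\HT^p(\Rn)}\lesssim\|g\|_{\Hp}$, and proves only the converse directly. For that converse, after reducing to the high-frequency part $g=(1-r)(D)f$, the paper uses the $\theta_{\w,\sigma}$-based Littlewood--Paley $g$-function characterization from Theorem \ref{pro1} (for $p=1$) and \cite[Corollary~4.5]{Rozendaal19} (for $1<p\le2$), localizes the $\sigma$-integral to $\sigma\eqsim\tau$, bounds the kernel of $\theta_{\w,\sigma}(D)$ via \eqref{fourier} and Young's inequality, and then integrates over the cap $E_{\nu,\tau}$. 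You instead work with the $\ph_\w$-characterization directly: you prove the two operator estimates $\|\ph_\w(D)g\|_{\HT^p}\lesssim\tau^{-(n-1)/4}\|g\|_{\HT^p}$ and $\|\Lambda_\w(D)g\|_{\HT^p}\lesssim\tau^{-(n-1)/4}\|g\|_{\HT^p}$ and use a reproducing formula $g=\int_{S^{n-1}}\Lambda_\w(D)\ph_\w(D)g\,\ud\w$ to derive \emph{both} directions of \eqref{eq:sobeq1} from first principles. Your approach is therefore slightly more self-contained (it does not invoke the Sobolev embedding as a black box); the paper's is shorter because it only has to prove one inequality. The underlying mechanism is the same: the cap has measure $\eqsim\tau^{(n-1)/2}$, each wave-packet multiplier acts with an operator norm $\eqsim\tau^{-(n-1)/4}$, and these combine to give $\tau^{s_p}$. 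The duality argument for $p>2$ and the sharpness argument by parabolic rescaling of a wave packet are essentially identical to the paper's.

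Two minor points of care. First, you split into cases by the size of $\tau$, whereas the paper decomposes $f$ into $r(D)f+(1-r)(D)f$. Your bounded-frequency reduction ``$\|f\|_{\Hp}\eqsim\|f\|_{\HT^p}$ when $A\tau^{-1}\lesssim1$'' is correct, but it does not actually follow from the $\ph_\w$-characterization plus the reproducing formula as you indicate (the reproducing formula fails near $\xi=0$); the clean justification is the Sobolev embeddings of \cite[Theorem~7.4]{HaPoRo18} combined with the fact that $\lb D\rb^{\pm2s_p}$, precomposed with a fixed low-frequency cutoff, is bounded on $\HT^p(\Rn)$ — which is exactly the paper's argument for $r(D)f$. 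Second, your observation that at $p=1$ the $L^1$, $H^1$ and $\HT^1$ norms coincide on functions with Fourier support in a small annulus at scale $\tau^{-1}$, uniformly in $\tau$, is indeed the crux of making the $L^1$-kernel argument yield boundedness on $\HT^1(\Rn)$; the paper circumvents this by not needing per-$\w$ $\HT^1$-boundedness of $\ph_\w(D)$, instead feeding the kernel estimate directly into the $\theta_{\w,\sigma}$ $g$-function expression.
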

Note that \eqref{eq:supportwp} holds in particular for the wave packets $\F^{-1}(\theta_{\nu,\tau})$ and $\F^{-1}(\chi_{\nu,\tau})$, by Lemma \ref{WP}.
\begin{proof}
We first deal with the low frequencies of $f$. Let $r,r'\in C^{\infty}_{c}(\Rn)$ be such that $r\equiv 1$ on $\supp(q)$, and $r'\equiv 1$ on $\supp(r)$. Then $\lb D\rb^{s}r'(D):\HT^{p}(\Rn)\to\HT^{p}(\Rn)$ is bounded for all $s\in\R$, so the Sobolev embeddings for $\HT^{p}_{FIO}(\Rn)$ from \cite[Theorem 7.4]{HaPoRo20} yield
\begin{align*}
\|\lb D\rb^{s_{p}}r(D)f\|_{\HT^{p}(\Rn)}&=\|\lb D\rb^{2s_{p}}r'(D)\lb D\rb^{-s_{p}}r(D)f\|_{\HT^{p}(\Rn)}\lesssim \|\lb D\rb^{-s_{p}}r(D)f\|_{\HT^{p}(\Rn)}\\
&\lesssim \|r(D)f\|_{\HT^{p}_{FIO}(\Rn)}\lesssim \|\lb D\rb^{s_{p}}r(D)f\|_{\HT^{p}(\Rn)}\\
&=\|\lb D\rb^{2s_{p}}r'(D)\lb D\rb^{-s_{p}}r(D)f\|_{\HT^{1}(\Rn)}\lesssim \|\lb D\rb^{-s_{p}}r(D)f\|_{\HT^{p}(\Rn)}.
\end{align*}
Hence all the norms of $r(D)f$ under consideration are equivalent, and it suffices to prove \eqref{eq:sobeq1} and \eqref{eq:sobeq2} with $f$ replaced by $g:=(1-r)(D)f$. Note that $q(D)g=0$.

\eqref{it:sobeq1}: By the Sobolev embeddings for $\HT^{p}_{FIO}(\Rn)$ one has $\|\lb D\rb^{-s_{p}}g\|_{\HT^{p}(\Rn)}\lesssim \|g\|_{\HT^{p}_{FIO}(\Rn)}$, so it remains to show that $\|g\|_{\HT^{p}_{FIO}(\Rn)}\lesssim \|\lb D\rb^{-s_{p}}g\|_{\HT^{p}(\Rn)}$. To this end, first note that $\ph_{\w}(\xi)=0$ if $|\xi|<1/8$ or $|\hat{\xi}-\w|>2|\xi|^{-1/2}$ (see e.g.~\cite[Remark 3.3]{Rozendaal21}). It is then easy to check, using the support properties of $\F(g)$, that $\theta_{\w,\sigma}(D)g=0$ if $|\w-\nu|>3A\sqrt{\tau}$ or $\sigma\notin[\tau/(2A),2A\tau]$. Let $E_{\nu,\tau}:=\{\w\in S^{n-1}\mid |\w-\nu|\leq 3A\sqrt{\tau}\}$, and note that $|E_{\nu,\tau}|\eqsim \tau^{\frac{n-1}{2}}$ for implicit constants independent of $\nu$ and $\tau$. We now use the characterization in Theorem \ref{pro1} for $p=1$, and the corresponding one in \cite[Corollary 4.5]{Rozendaal21} for $1<p\leq 2$. By combining this with the bounds for $\F^{-1}(\theta_{\w,\sigma})$ from \eqref{fourier}, we obtain
\begin{align*}
\|g\|_{\HT^{p}_{FIO}(\Rn)}^{p}&\eqsim \int_{\Sp}\bigg(\int_{0}^{1}|\theta_{\w,\sigma}(D)g(x)|^{2}\frac{\ud\sigma}{\sigma}\bigg)^{p/2}\ud x\ud\w= \int_{\Rn}\int_{E_{\nu,\tau}}\bigg(\int_{\tau/4}^{4\tau}|\theta_{\w,\sigma}(D)g(x)|^{2}\frac{\ud\sigma}{\sigma}\bigg)^{p/2}
\ud \w\ud x\\
&\lesssim \int_{E_{\nu,\tau}}\int_{\Rn}\sup_{\sigma\in [\tau/(2A), 2A\tau]}|\theta_{\w,\sigma}(D)g(x)|^{p}\ud x\ud \w\\
&\lesssim \int_{E_{\nu,\tau}}\int_{\Rn}\sup_{\sigma\in [\tau/(2A), 2A\tau]}\bigg(\int_{\Rn}\sigma^{-\frac{3n+1}{4}}(1+\sigma^{-1}|x-y|^{2}+\sigma^{-2}\lb \w,x-y\rb^{2})^{-(n+1)}|g(y)|\ud y\bigg)^{p}\ud x\ud \w\\
&\eqsim \tau^{-p\frac{n-1}{4}}\int_{E_{\nu,\tau}}\int_{\Rn}\bigg(\int_{\Rn}\tau^{-\frac{n+1}{2}}(1+\tau^{-1}|x-y|^{2}+\tau^{-2}\lb \w,x-y\rb^{2})^{-(n+1)}|g(y)|\ud y\bigg)^{p}\ud x\ud \w.\\
\end{align*}
Now, an anisotropic substitution shows that $
\int_{\Rn}\tau^{-\frac{n+1}{2}}(1+\tau^{-1}|z|^{2}+\tau^{-2}\lb \w,z\rb^{2})^{-(n+1)}\ud z\lesssim 1$ for all $\w\in {S}^{n-1}$. Using this twice, in conjunction with H\"{o}lder's inequality, we obtain
\begin{align*}
&\int_{\Rn}\bigg(\int_{\Rn}\tau^{-\frac{n+1}{2}}(1+\tau^{-1}|x-y|^{2}+\tau^{-2}\lb \w,x-y\rb^{2})^{-(n+1)}|g(y)|\ud y\bigg)^{p} \ud x\\
&\lesssim \int_{\Rn}\int_{\Rn}\tau^{-\frac{n+1}{2}}(1+\tau^{-1}|x-y|^{2}+\tau^{-2}\lb \w,x-y\rb^{2})^{-(n+1)}|g(y)|^{p}\ud y \ud x\lesssim \|g\|_{L^{p}(\Rn)}^{p}
\end{align*}
for each $\w\in {S}^{n-1}$. It follows that
\begin{align*}
\|g\|_{\Hp}^{p}\lesssim \tau^{-p\frac{n-1}{4}}\int_{E_{\nu,\tau}} \|g\|_{L^{p}(\Rn)}^{p} \ud \w
\eqsim \tau^{ps_{p}}\|g\|_{L^{p}(\Rn)}^{p} \eqsim \|\lb D\rb^{-s_{p}}g\|_{\HT^{p}(\Rn)}^{p}.
\end{align*}
The very last equivalence of norms is derived in a standard manner from the support properties of $\F(f)$, using for example a Littlewood--Paley description of the $\HT^{p}(\Rn)$-norm and a change of square functions. This proves \eqref{eq:sobeq1}.

To conclude the proof of \eqref{it:sobeq1}, we will apply \eqref{eq:sobeq1} to $\F^{-1}(\theta_{\nu,\tau})$ for $\tau\in(0,1)$ and a given $\nu\in S^{n-1}$. Let $\Psi'\in C^{\infty}_{c}(\Rn)$ be such that $\Psi'(\xi)=0$ for $|\xi|\notin [1/4,4]$ and such that $\Psi'\equiv1$ on $\supp(\Psi)$. Then for all $s\in\R$ and $\xi\in\supp(\theta_{\nu,\tau})$ one has
\begin{align*}
\lb \xi\rb^{-s_{p}}\theta_{\nu,\tau}(\xi)=\tau^{s_{p}-s}m_{\tau}(\xi)\lb\xi\rb^{-s}\theta_{\nu,\tau}(\xi),
\end{align*}
where $m_{\tau}(\xi):=\lb \xi\rb^{s-s_{p}}\tau^{s-s_{p}}\Psi'(\tau\xi)$ for $\xi\neq0$. Note that $m_{\tau}$ satisfies standard symbol estimates as in \eqref{eq:S0}, with constants independent of $\tau$. Hence $m_{\tau}(D):\HT^{p}(\Rn)\to\HT^{p}(\Rn)$ is bounded, uniformly in $\tau\in(0,1)$. Then, by \eqref{eq:sobeq1},
\begin{align*}
\|\F^{-1}(\theta_{\nu,\tau})\|_{\Hp}&\eqsim \|\lb D\rb^{-s_{p}}\F^{-1}(\theta_{\nu,\tau})\|_{\HT^{p}(\Rn)}=\tau^{s_{p}-s}\|m_{\tau}(D)\lb D\rb^{-s}\F^{-1}(\theta_{\nu,\tau})\|_{\HT^{p}(\Rn)}\\
&\lesssim \tau^{s_{p}-s}\|\lb D\rb^{-s}\F^{-1}(\theta_{\nu,\tau})\|_{\HT^{p}(\Rn)}.
\end{align*}
For $s<s_{p}$ the right-hand side tends to zero as $\tau\to0$, and it follows that in this case one does not have $\|\lb D\rb^{-s}\F^{-1}(\theta_{\nu,\tau})\|_{\HT^{p}(\Rn)}\lesssim \|\F^{-1}(\theta_{\nu,\tau})\|_{\Hp}$. That is, $\lb D\rb^{-s}:\Hp\to \HT^{p}(\Rn)$ is not bounded.

\eqref{it:sobeq2}: By the Sobolev embeddings for $\Hp$ from \cite[Theorem 7.4]{HaPoRo20}, one has $\|g\|_{\Hp}\lesssim \|\lb D\rb^{s_{p}}g\|_{\HT^{p}(\Rn)}$. For $2<p< \infty$ this also follows from the arguments used above to derive the corresponding inequality for $p\leq 2$. We will use duality to show that $\|\lb D\rb^{s_{p}}g\|_{\HT^{p}(\Rn)}\lesssim \|g\|_{\Hp}$.

First, for $B>1$ write
\[
F_{\nu,\tau,B}:=\{\xi\in\Rn\mid |\xi|\in [\tau^{-1}/B,B\tau^{-1}], |\hat{\xi}-\nu|\leq B\sqrt{\tau}\},
\]
and let $\rho\in C^{\infty}_{c}(\Rn)$ be such that $\rho\equiv 1$ on $F_{\nu,\tau,A}$ and $\rho\equiv0$ outside $F_{\nu,\tau,2A}$. Then $\rho(D)h\in\Sw(\Rn)$ with $\supp(\F(\rho(D)h))\subseteq F_{\nu,\tau,2A}$ for every $h\in\Sw(\Rn)$. Moreover, by taking Fourier transforms in the standard duality pairing $\lb g,h\rb_{\Rn}$ between $g$ and $h$, one obtains
\begin{equation}\label{eq:cutoff}
\lb g,h\rb_{\Rn}=\lb g,\rho(D)h\rb_{\Rn},
\end{equation}
where we used that $\supp(\F g)\subseteq F_{\nu,\tau,A}$, by assumption. Next, by what we have shown in part \eqref{it:sobeq1} with $A$ replaced by $2A$, one has
\begin{equation}\label{eq:inclusion}
\begin{aligned}
I_{1}&:=\{h\in \Sw(\Rn)\mid \supp(\F{h})\subseteq F_{\nu,\tau,2A}, \|\lb D\rb^{-s_{p}}h\|_{\HT^{p'}(\Rn)}\leq c\}\\
\subseteq I_{2}&:=\{h\in\Sw(\Rn)\mid \supp(\F{h})\subseteq F_{\nu,\tau,2A},\|h\|_{\HT^{p'}_{FIO}(\Rn)}\leq 1\}
\end{aligned}
\end{equation}
for some $c>0$ independent of $g$, $\nu$ and $\tau$. Since $\Hp=(\HT^{p'}_{FIO}(\Rn))^{*}$, where the duality pairing is the standard duality pairing between $f_{1}\in\Hp\subseteq\Sw'(\Rn)$ and $f_{2}\in\Sw(\Rn)\subseteq\HT^{p'}_{FIO}(\Rn)$ (see \cite[Proposition 6.8]{HaPoRo20}), and because $\Sw(\Rn)\subseteq \HT^{p'}_{FIO}(\Rn)$ is dense (cf.~\cite[Proposition 6.6]{HaPoRo20}), we can combine \eqref{eq:cutoff} and \eqref{eq:inclusion} to obtain
\begin{align*}
\|g\|_{\Hp}\eqsim \sup_{g\in I_{2}}|\lb g,h\rb_{\Rn}|\geq \sup_{h\in I_{1}}|\lb g,h\rb_{\Rn}|\eqsim \|\lb D\rb^{s_{p}}g\|_{\HT^{p}(\Rn)}.
\end{align*}
This proves \eqref{eq:sobeq2}. For the final statement in \eqref{it:sobeq2} one argues in a similar manner as for $p\leq 2$.
\end{proof}

\begin{remark}\label{rem:examples}
Proposition \ref{prop:examples} shows that the $\Hp$-norm behaves differently depending on whether $p<2$ or $p>2$. Recall from \cite[Theorem 7.4]{HaPoRo20} that
\begin{equation}\label{eq:Sobolev}
W^{s_{p},p}(\Rn)\subseteq\Hp\subseteq W^{-s_{p},p}(\Rn)
\end{equation}
for $1<p<\infty$, with suitable modifications for $p=1$ and $p=\infty$. For $1<p<2$ functions with frequency support in a dyadic-parabolic region have $\Hp$-norm comparable to the $W^{-s_{p},p}(\Rn)$-norm that appears on the right-hand side of \eqref{eq:Sobolev}. Informally speaking, such functions have a ``small" norm on the $L^{p}$-scale. On the other hand, for $2<p<\infty$ the same functions have $\Hp$-norm comparable to the $W^{s_{p},p}(\Rn)$-norm on the left-hand side of \eqref{eq:Sobolev}; here the norm is ``large" on the $L^{p}$-scale.

The fact that the Sobolev embeddings in \eqref{eq:Sobolev} are sharp was already observed in \cite[Remark 7.9]{HaPoRo20}, as a consequence of the optimal $L^{p}$-regularity of FIOs and the fact that $\Hp$ is invariant under suitable FIOs of order zero. On the other hand, Proposition \ref{prop:examples} gives an explicit class of examples that also shows that one of the Sobolev embeddings is optimal.
\end{remark}

\subsubsection*{Acknowledgments}

The authors would like to thank Lixin Yan for helpful discussions, and the anonymous referee for useful remarks.  During the preparation of this manuscript, J.~Rozendaal was supported by grant DP160100941 of the Australian Research Council. L.~Song is supported by  NNSF of China (No.~12071490).

\bibliographystyle{plain}
\bibliography{FLRS}

\end{document}